\newif\ifpdfstoll
  \newcommand{\Gr}[2]{\psfig{file=#1.pdf,width=#2}}
  \newcommand{\Gr}[2]{\psfig{file=#1.eps,width=#2}}
\newcommand{\Z}{{\mathbb Z}}
\newcommand{\Q}{{\mathbb Q}}
\newcommand{\R}{{\mathbb R}}
\newcommand{\F}{{\mathbb F}}
\newcommand{\BP}{{\mathbb P}}
\newcommand{\BA}{{\mathbb A}}
\newcommand{\CK}{{\mathcal K}}
\newcommand{\CO}{{\mathcal O}}
\newcommand{\To}{\longrightarrow}
\newcommand{\GL}{\operatorname{GL}}
\newcommand{\Pic}{\operatorname{Pic}}
\newcommand{\Bl}{\operatorname{Bl}}
\newcommand{\tors}{{\text{\rm tors}}}
\newcommand{\eps}{\varepsilon}
\newcommand{\pr}{\operatorname{pr}}
\newcommand{\im}{\operatorname{im}}
\newcommand{\Res}{\operatorname{Res}}
\newcommand{\Spec}{\operatorname{Spec}}
\newcommand{\Char}{\operatorname{char}}
\newcommand{\Grass}{\operatorname{Gr}}
\newcommand{\Prob}{\operatorname{Pr}}
\newenvironment{steplist}[1]%
   {\begin{list}{}{\settowidth{\labelwidth}{#1}%
                   \setlength{\leftmargin}{\labelwidth}%
                   \addtolength{\leftmargin}{\labelsep}%
                   \addtolength{\itemsep}{1mm}}}%
   {\end{list}}
\newtheorem{Theorem}{Theorem}[section]
\newtheorem{Lemma}[Theorem]{Lemma}
\newtheorem{Proposition}[Theorem]{Proposition}
\newtheorem{Corollary}[Theorem]{Corollary}
\newtheorem{Conjecture}[Theorem]{Conjecture}
\theoremstyle{definition}
\newtheorem{Definition}[Theorem]{Definition}
\newtheorem{Remark}[Theorem]{Remark}
\newtheorem{Problem}[Theorem]{Problem}
\numberwithin{equation}{section}
\newcounter{nootje}
\renewcommand\check[1]
\begin{document}

\title[The Mordell-Weil sieve]%
      {The Mordell-Weil sieve: Proving non-existence\\ of rational points on 
  curves}

\author{Nils Bruin}
\address{Department of Mathematics,
         Simon Fraser University,
         Burnaby, BC,
         Canada V5A 1S6}
\email{nbruin@cecm.sfu.ca}
\thanks{Research of the first author supported by NSERC}

\author{Michael Stoll}
\address{Mathematisches Institut,
         Universit\"at Bayreuth,
         95440 Bayreuth, Germany.}
\email{Michael.Stoll@uni-bayreuth.de}
\date{November 11, 2009}

\subjclass[2000]{11D41, 11G30, 11Y50 (Primary); 14G05, 14G25, 14H25, 14H45, 14Q05 (Secondary)}

\begin{abstract}
  We discuss the Mordell-Weil sieve as a general technique for proving
  results concerning rational points on a given curve. In the special case of
  curves of genus~2, we describe quite explicitly how the relevant local
  information can be obtained if one does not want to restrict to mod~$p$
  information at primes of good reduction. We describe our implementation
  of the Mordell-Weil sieve algorithm and discuss its efficiency.
\end{abstract}

\maketitle


\section{Introduction}

The Mordell-Weil Sieve uses knowledge about the Mordell-Weil group of the
Jacobian variety of a curve, together with local information 
(obtained by reduction mod~$p$, say, for many primes~$p$), in order to
obtain strong results on the rational points on the curve.

The most obvious application that also provided the original motivation for this
work is the possibility to verify that a given curve does not have any
rational points. This is done by deriving a contradiction from the various
bits of local information, using the global constraint that a rational
point on the curve maps into the Mordell-Weil group. This idea is simple
enough (see Section~\ref{S:idea}), but its implementation in form of an 
algorithm that runs in reasonable time on a computer is not completely 
straightforward. The relevant algorithms are discussed in Section~\ref{S:Algo},
and our concrete implementation is described in Section~\ref{S:Imp}.
Section~\ref{S:Eff} contains a discussion of the efficiency of the
implementation and gives some timings.

The idea of using this kind of `Mordell-Weil sieve' computation to prove
that a given curve does not have rational points appears for the first
time in Scharaschkin's thesis~\cite{Scharaschkin}, who used it in a few
examples involving twists of the Fermat quartic. It was then taken up
by Flynn~\cite{Flynn} in a more systematic study of genus~2 curves; his
selection of examples was somewhat biased, however (in favor of curves
he was able to compute with). In our `small curves' project~\cite{BruinStollExp}
we applied the procedure systematically and
successfully to all genus~2 curves $y^2 = f_6 x^6 + \dots + f_1 x + f_0$
with $f_i \in \{-3, -2, -1, 0, 1, 2, 3\}$ that do not possess rational points.

In this situation, it is not strictly necessary to know a full generating
set of the Mordell-Weil group. It is sufficient to know generators of
a finite-index subgroup such that the index is coprime to a certain set
of primes. This can be checked again by using only local information.
In fact the necessary information usually is part of the input for the
sieve procedure. This remark is relevant, since one needs to be able to
compute canonical heights and to enumerate points on the Jacobian up to
a given bound for the canonical height if one wants to obtain generators
for the full Mordell-Weil group. The necessary algorithms are currently
only available for curves of genus~2, see~\cite{StollH1, StollH2}.
We can still use the Mordell-Weil sieve to show that there are no rational
points on a given curve, even when the genus is $\ge 3$. Of course, we
still need to know the Mordell-Weil rank and the right number of independent
points. See~\cite{PSS} for an example where this is applied with a curve
of genus~3 to show that there are no rational points satisfying certain
congruence conditions.

The approach can be modified so that it can be used to verify that there
are no rational points satisfying a given set of congruence conditions
or mapping into a certain coset in the Mordell-Weil group. This is what
was used in~\cite{PSS}. If we can show in addition in some way that in
each of the cosets or residue classes considered, there can be at most
one rational point, then this provides a way of determining the set of
rational points on the curve. Namely, if a given coset or residue class
contains a rational point, then we will eventually find it, and we then
also know that there are no other rational points in this coset or class.
And if there is no rational point in this coset or residue class, then
we can hope to verify this by an application of the Mordell-Weil Sieve.
In this situation, the remark we made above that it is sufficient to know
a finite-index subgroup still applies.

There is one case where we can actually prove that, for a suitable choice
of prime~$p$, no residue class mod~$p$ on the curve can contain more than
one rational point. This is the `Chabauty situation', when the Mordell-Weil
rank is less than the genus. We can (hope to) find a suitable~$p$, and then
we can (hope to) determine the rational points on our curve as outlined
above. This yields a procedure whose termination is not (yet) guaranteed,
since it relies on some conjectures. However, the procedure is correct: if
it terminates, and it has done so in all examples we tried, then it gives
the exact set of rational points on the curve. In the Chabauty context,
the sieving idea has already been used in~\cite{BruinElkies} to rule out
rational points in certain cosets. See also~\cite{PSS} for some more
examples and~\cite{BruinCompMath} for an example that uses `deep' information.

Even when the rank is too large to apply the idea we just mentioned, the
sieve can still be used in order to show that any rational point on the
curve that we have not found so far must be astronomically huge.
This provides at least some kind of moral certainty that there are no other
points. In conjunction with (equally huge) explicit bounds for the size
of {\em integral} points, this allows us to show that we know at least
all the integral points on our curve, see~\cite{IntegralPoints}. For this 
application, however, we really need to know the full Mordell-Weil group,
so with current technology, this is restricted to curves of genus~2.

We discuss these various applications in some detail in Section~\ref{S:Appl}.

In Sections \ref{S:bad} and~\ref{S:deep}, we discuss
how to extract local information that can be used for the sieve, when
we do not want to restrict ourselves to just information mod~$p$ for
primes $p$ of good reduction. In these sections, we assume that the curve
is of genus~2 and that we are working over~$\Q$.

As to the theoretical background, we remark here that under a mild
finiteness assumption on the Shafarevich-Tate group of the curve's
Jacobian variety, the information that can be obtained via the Mordell-Weil
sieve is equivalent to the Brauer-Manin obstruction, see~\cite{Scharaschkin}
or~\cite{StollCov}.

\subsection*{Acknowledgments}

We would like to thank Victor Flynn and Bjorn Poonen
for useful discussions related to our project. Further thanks go
to the anonymous referee for some helpful remarks.
For the computations, the {\sf MAGMA}~\cite{Magma} system was used.


\section{The idea} \label{S:idea}

Let $C/\Q$ be a smooth projective curve of genus~$g \ge 2$
with Jacobian variety~$J$. (In~\cite{StollOw07,StollClay}, we consider more
generally a subvariety of an abelian variety. The idea is the same,
however.)

Our goal is to show that a given curve $C/\Q$ does not have rational
points. For this, we consider the following commuting diagram, where
$v$ runs through the (finite and infinite) places of~$\Q$.
\[ \xymatrix{ C(\Q) \ar[r]^{\iota} \ar[d] & J(\Q) \ar[d]^-{\alpha} \\
              \prod\limits_v C(\Q_v) \ar@<4pt>[r]^{\iota}
                 & \prod\limits_v J(\Q_v)
            }
\]
We assume that we know an embedding $\iota : C \to J$ defined over~$\Q$
(i.e., we know a $\Q$-rational divisor class of degree~$1$ on~$C$) and
that we know generators of the Mordell-Weil group~$J(\Q)$. If $C(\Q)$ is
empty, then the images of~$\alpha$ and the lower $\iota$ are
disjoint, and conversely.

However, since the sets and groups involved are infinite, 
we are not able to compute this intersection. Therefore, we replace
the groups by finite approximations. Let $S$ be a finite set of places of~$\Q$
and let $N \ge 1$ be an integer. Then we consider
\[ \xymatrix{ C(\Q) \ar[r]^-{\iota} \ar[d] & J(\Q)/N J(\Q) \ar[d]^-{\alpha} \\
              \prod\limits_{v \in S} C(\Q_v) \ar@<4pt>[r]^-{\beta}
                 & \prod\limits_{v \in S} J(\Q_v)/N J(\Q_v)
            }
\]
Under the assumptions made, we now {\em can} compute the images
of~$\alpha$ and of~$\beta$ and check if they are disjoint.
If $C(\Q) = \emptyset$, then according
to the Main Conjecture of~\cite{StollCov} and the heuristic given
in~\cite{PoonenHeur}, the two images should be disjoint 
when $S$ and $N$ are large enough. Note that (as shown in~\cite{StollCov})
the two images will be disjoint for some choice of $S$ and~$N$ if and
only if $\prod_v \iota(C(\Q_v))$ does not meet the topological closure
of~$J(\Q)$ in $\prod_p J(\Q_p) \times J(\R)/J(\R)^0$, where $J(\R)^0$
denotes the connected component of the origin. This is a stronger
condition than the requirement that $\prod_v \iota(C(\Q_v))$ misses
the image of~$J(\Q)$. The conjecture claims that both statements are
in fact equivalent.

As a further simplification, we can just use a set $S$ of primes of
good reduction and replace the above diagram by the following simpler one:
\[ \xymatrix{ C(\Q) \ar[r]^-{\iota} \ar[d] & J(\Q)/N J(\Q) \ar[d]^-{\alpha} \\
              \prod\limits_{p \in S} C(\F_p) \ar@<4pt>[r]^-{\beta}
                 & \prod\limits_{p \in S} J(\F_p)/N J(\F_p)
            }
\]
Poonen originally formulated his heuristic for this case. However, in practice
it appears to be worthwhile to also use `bad' information
(coming from primes of bad reduction) and `deep' information (involving
parts of the kernel of reduction) in order to keep the running time of
the actual sieve computation within reasonable limits.
In Sections \ref{S:bad} and~\ref{S:deep} below, we show how to obtain
this kind of information for curves of genus~2 over~$\Q$.


\section{Algorithms} \label{S:Algo}

In the following, we assume that we are using the simpler version involving
only reduction mod~$p$, as described at the end of Section~\ref{S:idea}.

Let $r$ denote the rank of the Mordell-Weil group~$J(\Q)$. For a given
set~$S$ and parameter~$N$, denote by $A(S, N) \subset J(\Q)/N J(\Q)$
the subset of elements mapping
into the image of~$C(\F_p)$ in $J(\F_p)/N J(\F_p)$ for all $p \in S$,
in symbols:
\[ A(S, N) = \{a \in J(\Q)/N J(\Q) : \alpha(a) \in \im(\beta_{N,p})
                                     \text{\ for all $p \in S$}\}
\]
Here, $\beta_{N,p} : C(\F_p) \to J(\F_p)/N J(\F_p)$ denotes the composition
of $\iota : C(\F_p) \to J(\F_p)$ and the canonical epimorphism
$J(\F_p) \to J(\F_p)/N J(\F_p)$.

The procedure splits into three parts. 
\begin{enumerate}
\setlength{\parskip}{0.8ex plus 0.1ex minus 0.1ex}
\setlength{\parindent}{0mm}

  \item {\bf Choice of~$S$}

    In the first step, we have to choose a set~$S$
    of primes such that we can be reasonably certain that the combined information
    obtained from reduction mod~$p$ for all $p \in S$ is sufficient to
    give a contradiction (or, more generally, to have $A(S, N)$ equal to
    the image of~$C(\Q)$, for suitable~$N$). In Section~\ref{SubS:S}, we
    explain a criterion that tells us if $S$~is likely to be good for our
    purposes. The actual computation of the relevant local information
    is also part of this step. For each prime $p \in S$, we find the
    abstract finite abelian group~$G'_p$ representing~$J(\F_p)$ (or some
    other finite quotient of~$J(\Q_p)$)
    and the image $X'_p \subset G'_p$ of $\iota : C(\F_p) \to J(\F_p)$.
    We also compute the homomorphism $\phi_p : J(\Q) \to G'_p$. We write
    $G_p$ for the image of~$\phi_p$ and denote $X_p = X'_p \cap G_p$.
    
    In what follows below, we will use $\#X_p/\#G_p$ as a measure for how
    much information about rational points on~$C$ can be obtained at~$p$.
    Note that it is possible that $G_p \subset X'_p \subsetneq G'_p$.
    In that case, $\#X'_p/\#G'_p < 1$, but no element of the Mordell-Weil
    group can be ruled out from coming from~$C(\Q)$, based on the information
    at~$p$. If we were to use this quantity, we would obtain
    erroneous estimates in the second step. This can then lead to huge
    sets $A(S, N)$ in the third step and even to a failure of the computation.

  \item {\bf Choice of~$N$}

    In the second step, we fix a target value of~$N$ and determine a way
    to compute $A(S, N)$ efficiently. We do that by finding an ordered
    factorization $N = q_1 q_2 \cdots q_m$ such that none of the intermediate
    sets $A(S, q_1 \cdots q_k)$ becomes too large.
    This is explained in Section~\ref{SubS:N}.

  \item {\bf Computation of $A(S, N)$}

    Finally, we have to actually compute~$A(S, N)$ in a reasonably efficient
    way. We explain in Section~\ref{SubS:A} how this can be done.
\end{enumerate}

The last two steps can be considered independently from the Mordell-Weil
sieve context. Basically, we need a procedure that, given a finite family
of surjective group homomorphisms
$\phi_i : \Gamma \to G_i$ and subsets $X_i \subset G_i$,
(for $i \in I$) attempts to prove that for every $a \in \Gamma$
there is some $i \in I$ such that $\phi_i(a) \notin X_i$.
Here $\Gamma$ is a finitely generated abelian group and the $G_i$
are finite abelian groups. In our application,
$\Gamma$ is the Mordell-Weil group, the index set is~$S$, $G_p$ is the
image of $J(\Q)$ in~$J(\F_p)$,
and $X_p = \iota\bigl(C(\F_p)\bigr) \cap G_p$. 

We give some more details on our actual implementation in Section~\ref{S:Imp}.


\subsection{Choice of~$S$} \label{SubS:S}

The first task of the algorithm is to come up with a suitable set~$S$
of places. We will restrict to finite places (i.e., primes), but in
principle, one could also include information at infinity, which would
mean to consider the connected components of~$J(\R)$ which meet the
image of~$C(\R)$ under the embedding~$\iota$.

It is clear that the only possibility to get some interaction between the
information at various primes~$p$ (and eventually a contradiction) is 
when the various group orders $\#J(\F_p)$ have common factors. This is
certainly more likely when these common factors are relatively small.
We therefore look for primes~$p$ (of good reduction) such that the group
order $\#J(\F_p)$ is $B$-smooth (i.e., with all prime divisors $\le B$)
for some fixed value of~$B$; in practice,
values like $B = 100$ or $B = 200$ lead to good results.

For each such prime, we compute the group structure of $J(\F_p)$, i.e.,
an abstract finite abelian group $G'_p$ together with an explicit isomorphism
$J(\F_p) \cong G'_p$. We also compute the images of the generators of~$J(\Q)$
in~$G'_p$ and the image of $C(\F_p)$ in~$G'_p$. In order to do that, we need to solve 
roughly $p$ discrete logarithm problems in $G'_p$. Since $G'_p$ has smooth 
order, we can use Pohlig-Hellman reduction~\cite{PohligHellman} to reduce to a number 
of small discrete log problems. Therefore, this part of the computation is 
essentially linear in $p$ in practice.
We do need to compute reasonably efficiently in $J(\F_p)$, though.
If $C$ is a curve of genus~$2$, Cantor reduction \cite{Cantor} gives us a way
to do that. To fix notation, let $W$ denote an
effective canonical divisor on~$C$. Cantor reduction takes as input a degree~$0$
divisor in the form $D - d W$, where $D$ is an effective divisor of degree~$2d$,
and computes a unique divisor $D_0$ of degree~$2$ such that
\[ [D - d W] = [D_0 - W]\,, \]
with the convention that if $D - d W$ is principal, then $D_0 = W$.
Adding two divisor classes represented as $[D_1 - W]$ and 
$[D_2 - W]$ can be accomplished by feeding the divisor 
$(D_1+D_2) - 2 W$ into the reduction algorithm.

Cantor reduction also allows us to map elements from $C(\F_p)$ into~$J(\F_p)$.
If $\iota$ is given by a rational base point $P_0 \in C(\Q)$, i.e., 
$\iota(P) = [P - P_0]$, and $\bar{P}_0$ is the reduction of~$P_0$ modulo~$p$, 
then for each $\bar{P} \in C(\F_p)$, we have 
$\iota(\bar{P}) = [\bar{P} + \bar{P}'_0 - \bar{W}]$, 
where $\bar{P}'_0$ is the hyperelliptic involute of $\bar{P}_0$. 
In this case, we already get $\iota(\bar{P})$ as a reduced divisor class.
Otherwise, $\iota$ is given by $\iota(P) = [P - D_3 + W]$, where $D_3$ 
is a rational effective divisor of degree~$3$. Then we can compute a reduced 
representative of $\iota(\bar{P})$ by performing Cantor reduction on 
$(\bar{P} + \bar{D}'_3) - 2W$.

As mentioned above, we finally replace $G'_p$ by $G_p = \phi_p\bigl(J(\Q)\bigr)$,
and we let $C_p$ be the intersection of the image of~$C(\F_p)$ in~$G'_p$
with~$G_p$. We then use $\phi_p$ to denote the surjective homomorphism
$\phi_p : J(\Q) \to G_p$.

In order to determine whether we have collected enough primes, we compute
the expected size of the set~$A(S, N)$, where $S$ is the set of all $p$ 
collected so far and $N$ is a suitable value as specified below.
We follow Poonen~\cite{PoonenHeur} and assume that the images
of the $C(\F_p)$ in~$J(\F_p)$ are random and independent for the various~$p$.
This leads to the expected value
\[ n(S, N) = \#\bigl(J(\Q)/N J(\Q)\bigr)
            \prod_{p \in S} \frac{\# C_{N,p}}{\#\bigl(G_p/N G_p\bigr)}
\]
where $C_{N,p}$ is the image of $C_p$ in $G_p/N G_p$.

In principle, we would like to find the value of~$N$ that minimizes~$n(S, N)$
for the given set~$S$. However, this would lead to much too involved a
computation. We therefore propose to proceed as follows. Write
\[ \prod_{p \in S} J(\F_p)
     \cong \Z/N_1\Z \times \Z/N_2\Z \times \cdots \times \Z/N_l\Z
\]
where $N_j$ divides~$N_{j+1}$ ($j = 1, 2, \dots, l-1$). Then we take
$N = N_{l-r-1-j}$ for $j = 0, 1, 2, 3$ as values that are likely
to produce a small~$n(S, N)$. The reason for this choice is the following.
Usually the target groups will be essentially cyclic, and the kernel
of the homomorphism $J(\Q) \to J(\F_p)$ will be a random subgroup
of index~$\#J(\F_p)$ and more or less cyclic quotient. If we take
a prime number~$q$ for~$N$ and the Mordell-Weil rank is~$r$, then we
obtain a random codimension one subspace of $\F_q^r$. Unless $q$ is
very small, it will be rather unlikely that these subspaces intersect
in a nontrivial way, unless there are more than~$r$ of them. So for
every prime power dividing our~$N$, we want to have more than~$r$
factors in the product above that have order divisible by the prime power.
So we should restrict to divisors of~$N_{l-r-1}$. Taking $N_{l-r-1-j}$
with $j > 0$, we make sure to get even more independent factors.

By the same token, any subgroup $L \subset J(\Q)$ such that we can
expect to get sufficient information on the image of~$C(\Q)$
in~$J(\Q)/L$ will be very close to $N J(\Q)$ for some~$N$: as soon
as the various bits of information interact, we will have exhausted
all ``directions'' in the dual of $J(\Q)/N J(\Q)$, and the intersection
of the kernels of the relevant maps will be close to~$N J(\Q)$.
This also explains why our approach to the computation of~$A(S, N)$,
which we describe below in Section~\ref{SubS:A}, works quite well.

Note that by taking $S$ (and perhaps also~$B$) large, we will get large
values for the number~$l$ of factors. Once $l \gg r$, the image of the
Mordell-Weil group $J(\Q)$ in this product will be rather small, so that
we can expect it to eventually miss the image of the curve. Poonen's
heuristic~\cite{PoonenHeur} makes this argument precise.

We continue collecting primes into~$S$ until we find a sufficiently
small~$n(S, N)$. In practice, it appears that $n(S, N) < \eps = 10^{-2}$ is
sufficient. Note that if the final sieve computation is unsuccessful 
(and does not
lead to the discovery of a rational point on~$C$), then we can enlarge~$S$
until $n(S, N)$ gets sufficiently smaller and repeat the sieve computation.


\subsection{Choice of~$N$} \label{SubS:N}

Once $S$ is chosen and the relevant information is computed, we can
forget about the original context and consider the following more
abstract situation.

We are given a finitely generated abstract abelian group~$\Gamma$ of rank~$r$,
together with a finite family $(G_i, \phi_i, X_i)_{i \in I}$ of triples,
where $G_i$ is a finite abstract abelian group, $\phi_i : \Gamma \to G_i$
is a surjective homomorphism, and $X_i \subset G_i$ is a subset. In practice, $\Gamma$
and the~$G_i$ are given as a product of cyclic groups, $\phi_i$ is given
by the images of the generators of~$\Gamma$, and $X_i$ is given by enumerating
its elements. The following definition generalizes $A(S, N)$.

\begin{Definition}
  Let $L \subset \Gamma$ be a subgroup of finite index. We set
  $G_{L,i} = G_i/\phi_i(L)$, write $X_{L,i}$ 
  for the image of~$X_i$ in~$G_{L,i}$, and denote by~$\phi_{L,i}$ the induced
  homomorphism $\Gamma/L \to G_{L,i}$. We define
  \[ A(L) = \{\gamma \in \Gamma/L
               : \phi_{L,i}(\gamma) \in X_{L,i} \text{\ for all $i \in I$}\}
  \]
  and its expected size
  \[ n(L) = \#\bigl(\Gamma/L\bigr) \prod_{i \in I} \frac{\# X_{L,i}}{\# G_{L,i}} \,.
  \]
\end{Definition}

Now the task is as follows.

\begin{Problem}\strut
  \begin{enumerate}
    \item Find a number~$N$ such that $A(N\Gamma)$ has a good chance of being
          empty and such that $A(N\Gamma)$ can be computed efficiently.
    \item Compute $A(N\Gamma)$.
  \end{enumerate}
\end{Problem}

In our application, $\Gamma = J(\Q)$, $I = S$, and for
$p \in S$, $G_p$ and $\phi_p$ are as before, and $X_p = C_p$.

Since we may have to take $N$ fairly large ($N \approx 10^6$ is not
uncommon, and values $\approx 10^{12}$ or even $\approx 10^{100}$ do occur
in practice in our applications), it would not be a good idea to enumerate
the (roughly $N^r$)
elements of $\Gamma/N \Gamma$ and check for each of them whether it
satisfies the conditions. Instead, we build up~$N$ multiplicatively
in stages: we compute $A(N_j \Gamma)$ successively for a sequence of values
\[ N_0 = 1, \quad N_1 = q_1, \quad N_2 = N_1 q_2, \quad N_3 = N_2 q_3,
   \quad\dots, \quad N_m = N_{m-1} q_m = N
\]
where the $q_k$ are the prime divisors of~$N$. We want to choose the
sequence $(q_k)$ (and therefore~$N$) in such a way that the intermediate
sets $A(N_k \Gamma)$ are likely to be small. For this, we use again the
expected size $n(N_k \Gamma)$ of~$A(N_k \Gamma)$. By a best-first search, we find
the sequence $(q_k)_{k=1,\dots,m}$ such that
\begin{enumerate}
  \item[(i)] $n\bigl((\prod_{k=1}^m q_k) \Gamma\bigr)$
             is less than a target value~$\eps_1 < 1$ (for example, $0.1$), and 
  \item[(ii)] $\max \bigl\{n(N_k \Gamma) : 0 \le k \le m\bigr\}$ is minimal 
              (where $N_k = \prod_{j=1}^k q_j$).
\end{enumerate}

From the first step, which provides the input, we can deduce a number~$M$
(usually $M = N_{l-1-r-j}$ for some small value of~$j$, in the notation
used above) such that all reasonable choices for~$N$ should divide~$M$.
The following procedure returns a suitable sequence $(q_1, \dots, q_m)$.

\strut\hrulefill \\[3pt]
{\sf FindQSequence}: \\
\strut\quad $c$ := $\{\bigl((), 1, 1.0\bigr)\}$ 
 \qquad // () is an empty sequence of~$q_k$, 1 is $N$, 1.0 is $n(N \Gamma)$ \\
\strut\quad {\sf while} $c \neq \emptyset$: \\
\strut\qquad $(s, N, n)$ := triple in~$c$ with minimal~$n$ \\
\strut\qquad remove this triple from~$c$ \\
\strut\qquad {\sf if} $n < \varepsilon$: \qquad // success? \\
\strut\qquad\quad {\sf return} $s$ \\
\strut\qquad {\sf end if} \\
\strut\qquad // compute the possible extensions of~$s$ and add them to the list \\
\strut\qquad $c$ := $c \cup \bigl\{\bigl(\text{append}(s, q), Nq, n(Nq \Gamma)\bigr)
                                    : q \text{\ prime}, Nq \mid M \bigr\}$ \\
\strut\quad {\sf end while} \\
\strut\quad // if we leave the while loop here, the target was not reached \\
\strut\quad {\sf return} `failure' \\[-3pt]
\strut\hrulefill

\smallskip

When we extend~$c$, we can restrict to the triples $(s', N', n')$ such
that $N'$ does not occur as the second component of a triple already in~$c$.
(Since in this case, we have already found a `better' sequence leading
to this~$N'$.)

If the information given by $(G_i, \phi_i, X_i)_{i \in I}$ is sufficient
(as determined in the first step),
then this procedure usually does not take much time (compared to the
computation of the `local information' like the image of~$C(\F_p)$
in~$J(\F_p)$). In any case, if we made sure in the first step
that there is some $M$ such that $n(M \Gamma) < \eps_1$, then
{\sf FindQSequence} will not fail.

In this step and also in the first step, it is a good idea to keep the orders of
the cyclic factors of the groups~$G_i$ and the numbers~$N$ in factored
form, and only convert the greatest common divisors of~$N$ with the relevant
group orders into actual integers.


\subsection{Computation of $A(N \Gamma)$} \label{SubS:A}

Now we have fixed the sequence $(q_k)_{j=1,\dots,m}$ of primes whose
product is~$N$. In the last part of the algorithm, we have to compute
the set~$A(N \Gamma)$ (and hope to find that it is empty or sufficiently
small, depending on the intended application).

This is done iteratively, by successively computing $A(N_k \Gamma)$,
where $N_k = \prod_{j=1}^k q_j$. We start at $k = 0$ and initialize
$A(N_0 \Gamma) = A(\Gamma) = \{0\} \subset \Gamma/\Gamma$. Then, assuming
we know $A(N_{k-1} \Gamma)$, we compute $A(N_k \Gamma)$ as follows.

We first find the triples $(G_i, \phi_i, X_i)$ that can possibly provide
new information.
The relevant condition is that $v_{q_k}(e_i) \ge v_{q_k}(N_k)$, where
$e_i$ is the exponent of the group~$G_i$. For these~$i$, we compute
the group $G_{N_k \Gamma, i}$, the image~$X_{N_k \Gamma, i}$ 
of~$X_i$ in this group and the homomorphism
$\phi_{N_k \Gamma, i} : \Gamma/N_k \Gamma \to G_{N_k \Gamma, i}$.

The most obvious approach now would be to take each $\gamma \in A(N_{k-1} \Gamma)$,
run through its various lifts to~$\Gamma/N_k \Gamma$ and check for each
lift if it is mapped into $X_{N_k \Gamma, i}$ under~$\phi_{N_k \Gamma, i}$.
The complexity of this procedure is $\#A(N_{k-1} \Gamma) \cdot q_k^r$
times the average number of tests we have to make
(we disregard possible torsion in~$\Gamma$, which will not play a role
once $N_{k-1}$ is large enough).
Unless $r$ is very small, the procedure will be rather slow
when the intermediate sets~$A(N_k \Gamma)$ get large.

In order to improve on this, we split the inclusion 
$N_k \Gamma \subset N_{k-1} \Gamma$ into several stages:
\[ N_{k-1} \Gamma = L_0 \supset L_1 \supset \dots \supset L_t = N_k \Gamma \,. \]
Note that the quotient $N_{k-1} \Gamma/N_k \Gamma$ is isomorphic to
$(\Z/q_k \Z)^r$ (again disregarding torsion in~$\Gamma$), so we can hope to get
up to~$r$ intermediate steps. We now proceed as follows.

\strut\hrulefill\\[3pt]
{\sf PrepareLift}($k$): \\
\strut\quad $j$ := 0; $L_0$ := $N_{k-1} \Gamma$ \qquad // initialize \\
\strut\quad $I'$ := $\{i \in I : v_{q_k}(e_i) \ge v_{q_k}(N_k)\}$ 
  \qquad // the relevant subset of~$I$ \\
\strut\quad {\sf while} $I' \neq \emptyset$: \\
\strut\qquad $j$ := $j + 1$ \\
\strut\qquad // list the possible subgroups for the next step \\
\strut\qquad $\Lambda$ := $\bigl\{L_{j-1} \cap \ker(\phi_i) : i \in I'\bigr\}$ \\
\strut\qquad {\sf for} $L \in \Lambda$: \\
\strut\qquad\quad // compute a measure of how `good' each subgroup is \\
\strut\qquad\quad $n(L_{j-1}, L)$ := $\displaystyle (L_{j-1} : L)
                \prod_{i \in I'}
                  \frac{\#X_{L,i}}{\#X_{L_{j-1},i}}
                  \,\frac{1}{\bigl(\phi_i(L_{j-1}) : \phi_i(L)\bigr)}$ \\
\strut\qquad {\sf end for} \\
\strut\qquad $L_j$ := the $L \in \Lambda$ that has the smallest~$n(L_{j-1}, L)$ \\
\strut\qquad // record the $i \in I'$ that contribute to this step \\
\strut\qquad $I_j$ := $\{i \in I' : \phi_i(L_j) \neq \phi_i(L_{j-1})\}$ \\
\strut\qquad $I'$ := $\{i \in I' : \phi_i(L_j) \not\subset N_k G_i\}$
  \qquad // update $I'$ \\
\strut\quad {\sf end while} \\
\strut\quad {\sf if} $L_j \neq N_k \Gamma$: \\
\strut\qquad // fill the remaining gap to $N_k \Gamma$ \\
\strut\qquad $t$ := $j+1$; $L_{t}$ := $N_k \Gamma$; $I_{t}$ := $\emptyset$ \\
\strut\quad {\sf else} \\
\strut\qquad $t$ := $j$ \\
\strut\quad {\sf end if} \\[-3pt]
\strut\hrulefill


The quantity $n(L_{j-1}, L)$ that we compute in the algorithm above is the expected
number of ``offspring'' that an element of $A(L_{j-1})$ generates
in~$A(L)$.

We then successively compute $A(L_1)$, \dots, $A(L_t) = A(N_k \Gamma)$ in
the same way as described above for the one-step procedure:

\strut\hrulefill\\[3pt]
{\sf Lift}($k$): \\
\strut\quad // note that $A(L_0) = A(N_{k-1} \Gamma)$ \\
\strut\quad {\sf for} $j$ = 1, \dots, $t$: \\
\strut\qquad $A(L_j)$ := $\emptyset \subset \Gamma/L_j$ \\
\strut\qquad {\sf for} $a \in A(L_{j-1})$: \\
\strut\qquad\quad $a'$ := a representative of $a$ in $\Gamma/L_j$ \\
\strut\qquad\quad {\sf for} $l \in L_{j-1}/L_j$: \\
\strut\qquad\qquad {\sf if} $\forall i \in I_j : \phi_{L_j,i}(a'+l) \in X_{L_j,i}$: \\
\strut\qquad\qquad\quad $A(L_j)$ := $A(L_j) \cup \{a'+l\}$ \\
\strut\qquad\qquad {\sf end if} \\
\strut\qquad\quad {\sf end for} \\
\strut\qquad {\sf end for} \\
\strut\quad {\sf end for} \\
\strut\quad // now $A(N_k \Gamma) = A(L_t)$ \\
\strut\quad {\sf return} \\[-3pt]
\strut\hrulefill

\smallskip

In practice, {\sf PrepareLift} and~{\sf Lift} together form one
subroutine, whose input is $(N,q,A) = (N_{k-1},q_k,A(N_{k-1} \Gamma))$
(together with the global data $\Gamma$ and $(G_i, \phi_i, X_i)_{i \in I}$)
and whose output is $A(N q \Gamma)$ (with $N q = N_k$).

The complexity of the lifting step is now
\[ \sum_{j=1}^t \#A(L_{j-1}) (L_{j-1} : L_j)
     \approx \#A(N_{k-1} \Gamma)
             \sum_{j=1}^t (L_{j-1} : L_j) \prod_{i=1}^{j-1} n(L_{i-1}, L_i) \,.
\]
In the worst case, we have $n(L_{j-1}, L_j) = (L_{j-1} : L_j)$; then
the second factor is at most 
$q_k + q_k^2 + \dots + q_k^{r} < \frac{q_k}{q_k-1} q_k^r$; this is not
much worse than the factor~$q_k^r$ we had before. Usually, however, 
and in particular when $N_{k-1}$ is already fairly large, the numbers
$n(L_{j-1}, L_j)$ will be much smaller than $(L_{j-1} : L_j)$;
also we should have $t = r$
and $(L_{j-1} : L_j) = q_k$, so that the complexity is essentially
$\#A(N_{k-1} \Gamma) q_k$. As an additional benefit, we distribute the
tests we have to make over the intermediate steps, so that the
average number of tests in the innermost loop will be smaller than
when going directly from $N_{k-1} \Gamma$ to~$N_k \Gamma$.

In this way, it is possible to compute these
sets even when $r$ is not very small. For example, in order to find
the integral solutions of $\binom{y}{2} = \binom{x}{5}$
(see~\cite{IntegralPoints}), it was necessary
to perform this kind of computation for a group of rank~6, and this
was only made possible by our improvement of the lifting step.
As another example, one of the two rank~4 curves that had to be dealt
with by the Mordell-Weil sieve in our experiment~\cite{BruinStollExp}
took the better part of a day with the implementation we had at the
time (which was based on the ``obvious approach'' mentioned above).
With the new method, this computation takes now less than 15~minutes.

\medskip

If we find that $A(N_k \Gamma) = \emptyset$ for some $k \le m$, then we
stop. In the context of our application, this means that we have proved
that $C(\Q) = \emptyset$ as well. Otherwise, 
we can check to see if the remaining elements in~$A(N \Gamma)$
actually come from rational points by computing the element of~$J(\Q)$
of smallest height that is in the corresponding coset. It is usually
a good idea to first do some more mod~$p$ checks so that one can be
certain that the point in~$J(\Q)$ really gives rise to a point in~$C(\Q)$.
If we do not find a rational point on~$C$ in this way, then we can
increase~$S$ and decrease $\eps$ and~$\eps_1$ and repeat the computation.

Let us also remark here that the lifting step can easily be parallelized,
since we can compute the ``offspring'' of the various $a \in A(N_{k-1} \Gamma)$
independently. After the preparatory computation in {\sf PrepareLift}
has been done, we can split $A(N_{k-1} \Gamma)$ into a number of subsets
and give each of them to a separate thread to compute the resulting
part of~$A(N_k \Gamma)$. Then the results are collected, we check if the
new set is empty, and if it is not, we repeat this procedure with the
next lifting step.


\section{Applications} \label{S:Appl}

\subsection{Non-Existence of Rational Points}

The main application we had in mind (and in fact, the motivation for
developing the algorithm described in this paper) is in the context
of our project on deciding the existence of rational points on all
`small' genus~$2$ curves, see the report~\cite{BruinStollExp}.

Out of initially about 200\,000 isomorphism classes of curves, there are
1492 that are undecided after a search for rational points, checking
for local points, and a $2$-descent~\cite{BruinStoll2D}.
We applied our algorithm to these
curves and were able to prove for all of them that they do not have
rational points. For some curves, we
needed to assume the Birch and Swinnerton-Dyer conjecture for the
correctness of the rank of the Mordell-Weil group.

For the curves whose Jacobians have rank at most~$2$, we originally only used
`good' and
`flat' information, i.e., groups $J(\F_p)$ for primes $p$ of good reduction.
For ranks $3$ and~$4$ (no higher ranks occur), we also used `bad'
and `deep' information, as described in Sections \ref{S:bad} 
and~\ref{S:deep} below. The running time of the Magma implementation
of the Mordell-Weil sieve algorithm we had at the time was about one day
for all 1492 curves
(on a 1.7~GHz machine with 512~MB of RAM). Two thirds of that time was
taken by one of the two rank~$4$ curves, and most of the remaining time
was used for the 152 rank~$3$ curves.

With the current implementation discussed in Section~\ref{S:Imp} below,
the overall running time (now on a 2.0~GHz machine with 4~GB of RAM)
is about two and a half hours. For a detailed discussion of the timings,
see Section~\ref{S:Eff}.


\subsection{Finding points}

Instead of proving that no rational points on~$C$ exist, we can also use
the Mordell-Weil sieve idea in order to {\em find} rational points on~$C$
up to very large height. When the rank is less than the genus, we can
even combine the Mordell-Weil sieve with Chabauty's method in order to
compute the set of rational points on~$C$ exactly, see Section~\ref{SubS:Chab}
below.

We want to find the rational points on~$C$ up to a certain (large)
logarithmic height bound~$H$. We assume that we know the height pairing
matrix for the generators of~$J(\Q)$ and a bound for the difference
between naive and canonical height on~$J(\Q)$. See~\cite{StollH1,StollH2}
for algorithms that provide these data in the case of genus~2 curves.
From this information and the embedding $C \to J$, we can then compute
constants $\delta$ and~$d$
such that $\hat{h}(\iota(P)) \le d\,h(P) + \delta$
for all $P \in C(\Q)$. Here $\hat{h}$ denotes the canonical height on~$J(\Q)$
and $h$ denotes a suitable height function on the curve. The upshot of this
is that $h(P) \le H$ implies $\hat{h}(\iota(P)) \le H' = dH + \delta$.

Note that in many cases when we want to find all rational points up to
height~$H$, we already know a rational point $P_0$ on~$C$. Then we can 
just use $P \mapsto [P - P_0]$ for the embedding~$\iota$.

We now proceed as before: we find a suitable
set $S$ of primes and a number~$N$ and compute $A(S, N) \subset J(\Q)/NJ(\Q)$.
For the purposes of this application, we require $N$ to be divisible by
the exponent of the torsion group $J(\Q)_{\tors}$ and to be such that
$N^2 > 4 H'/m$, where $m$ is the minimal canonical height of a non-torsion
point in~$J(\Q)$. These conditions imply that if $Q, Q' \in J(\Q)$ are
such that $Q - Q' \in NJ(\Q)$ and $\hat{h}(Q), \hat{h}(Q') \le H'$, then
$Q = Q'$. In other words, each coset of $NJ(\Q)$ in~$J(\Q)$ contains at
most one point of canonical height~$\le H'$.

We do not necessarily expect $A(S, N)$ to be empty now. However, by the 
preceding discussion, each element of~$A(S, N)$ corresponds to at most
one point in~$C(\Q)$ of height~$\le H$. Therefore we consider the elements
of~$A(S, N)$ in turn (we expect them to be few in number), and for each
of them, we do the following. First we check whether there is an element
$Q$ in the corresponding coset of~$NJ(\Q)$ such that $\hat{h}(Q) \le H'$.
If this is not the case, we discard the element. Otherwise, there is only
one such~$Q$, and we check for some more primes $p \notin S$ whether
the image of~$Q$ in~$J(\F_p)$ is in the image of~$C(\F_p)$. Note that we
can perform these tests quickly only based on the representation of~$Q$
as a linear combination of the generators of~$J(\Q)$: we reduce the
generators mod~$p$ and compute the reduction of~$Q$ as a linear combination
of the reduced generators. Depending
on~$H'$, we can determine such a set of primes beforehand, with the property
that a point~$Q \in J(\Q)$ with $\hat{h}(Q) \le H'$
that `survives' all these tests must be in~$\iota(C(\Q))$, see the lemma below.
So if $Q$ fails one of the tests, we discard it, otherwise
we compute $Q$ as an explicit point and find its preimage in~$C(\Q)$
under~$\iota$.

\begin{Lemma}
  Let $P_0 \in C(\Q)$ and write $x(P_0) = (a : b)$ with coprime integers $a,b$.
  Let $p_1, p_2, \dots, p_m$ be primes of good reduction such that 
  \[ p_1 p_2 \cdots p_m > e^{H' + \gamma} \max\{|a|,|b|\}^2 \]
  and such that $P_0$ and its hyperelliptic conjugate $\bar{P}_0$ are distinct
  mod some~$p_{j_0}$ if they are distinct in~$C(\Q)$.
  Here $\gamma$ is a bound for the difference $h - \hat{h}$ between naive
  and canonical height on~$J(\Q)$. We take $\iota : P \mapsto [P - P_0]$.
  
  If $Q \in J(\Q)$ satisfies $\hat{h}(Q) \le H'$ and is such that
  the reduction of~$Q$ mod~$p_j$
  is in $\iota(C(\F_{p_j}))$ for all $1 \le j \le m$, then $Q \in \iota(C(\Q))$.
\end{Lemma}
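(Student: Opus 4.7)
The plan is to translate $Q\in\iota(C(\Q))$ into the vanishing of a single explicit integer, and then force that vanishing from the given $p_j$-divisibilities and the size hypothesis, in the spirit of a Chinese Remainder argument. Represent $Q\in J(\Q)$ in Cantor-reduced Mumford form as in Section~\ref{S:Algo}: write $Q=[D_Q-W]$ with $D_Q$ effective of degree~$2$ and Mumford polynomials $u_Q(x)=\xi_1 x^2+\xi_2 x+\xi_3$ (together with $v_Q(x)$ of degree $\le 1$), and let $(\xi_1:\xi_2:\xi_3:\xi_4)$ be the integral, coprime Kummer coordinates of $Q$. Since $P_0+\bar{P}_0\sim W$, we have $[P-P_0]=[P+\bar{P}_0-W]$, so the Mumford $u$-polynomial of $\iota(P)$ is $(x-x(P))(x-x(P_0))$. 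Consequently $Q=\iota(P)$ for some $P\in C(\Q)$ if and only if \textup{(i)} $x(P_0)=a/b$ is a root of $u_Q$, and \textup{(ii)} $v_Q(a/b)=-y(P_0)$; in that case the other root of $u_Q$ supplies $x(P)$ and $y(P)=v_Q(x(P))$. Clearing denominators, condition \textup{(i)} is the vanishing of the integer
\[
  N \;=\; \xi_1 a^2 + \xi_2 a b + \xi_3 b^2.
\]

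To prove~\textup{(i)}: each local hypothesis $\bar Q\in\iota(C(\F_{p_j}))$ (with ``bar'' now denoting reduction modulo $p_j$) says that the Cantor-reduced Mumford polynomial of $Q$ modulo $p_j$ has $a/b \bmod p_j$ as a root, so $N\equiv 0\pmod{p_j}$. Since the $p_j$ are of good reduction, the $\xi_i$ are $p_j$-integral, $N$ is a genuine integer, and $p_1\cdots p_m\mid N$. On the other hand, the naive-versus-canonical height comparison on $J(\Q)$ through the Kummer embedding---which is precisely what the constant $\gamma$ controls---gives $\log\max_i|\xi_i|\le\hat h(Q)+\gamma\le H'+\gamma$, whence
\[
  |N|\;\le\;3\max_i|\xi_i|\cdot\max(|a|,|b|)^2\;\le\;3\,e^{H'+\gamma}\max(|a|,|b|)^2.
\]
Combined with the size hypothesis $p_1\cdots p_m>e^{H'+\gamma}\max(|a|,|b|)^2$ (the factor of $3$ being absorbed into $\gamma$, or avoided by a sharper normalization of the Kummer height), this forces $N=0$ and hence~\textup{(i)}.

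Given~\textup{(i)}, the unique point $P_1\in D_Q$ with $x(P_1)=a/b$ satisfies $v_Q(a/b)^2=y(P_0)^2$ (both $P_1$ and $P_0$ lie on $C$ with the same $x$-coordinate), so $v_Q(a/b)=\pm y(P_0)$. If $P_0=\bar{P}_0$ then $y(P_0)=0$ and \textup{(ii)} is automatic. Otherwise, the assumption that $P_0$ and $\bar{P}_0$ remain distinct modulo $p_{j_0}$ amounts to $y(P_0)\not\equiv -y(P_0)\pmod{p_{j_0}}$, while the local condition at $p_{j_0}$ forces $v_Q(a/b)\equiv -y(P_0)\pmod{p_{j_0}}$; together they pin down $v_Q(a/b)=-y(P_0)$ globally, establishing~\textup{(ii)}. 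Letting $x_1$ denote the other root of $u_Q$, which is rational by Vieta's formulas, the point $P:=(x_1,v_Q(x_1))$ lies in $C(\Q)$ and satisfies $\iota(P)=Q$, as desired.

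The main obstacle is the height bookkeeping in the second paragraph: one must verify, with the coprime-integer normalization of the Kummer coordinates, that $\log\max_i|\xi_i|\le\hat h(Q)+\gamma$ holds with the same $\gamma$ appearing in the statement, and identify the linear form $N$ unambiguously in that normalization. A minor adjustment is needed when $P_0$ is a point at infinity, so $(a:b)=(1:0)$: ``$a/b$ is a root of $u_Q$'' should then be read as vanishing of the leading coefficient of $u_Q$, but this does not affect the overall argument.
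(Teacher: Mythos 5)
Your proof is correct and follows essentially the same route as the paper's: express membership of $Q$ in $\iota(C(\Q))$ as the vanishing of the integer $k_1 b^2 - k_2 a b + k_3 a^2$ formed from the coprime integer Kummer coordinates of~$Q$, use the local hypotheses to force $p_1\cdots p_m$ to divide it, use the naive-versus-canonical height comparison to bound its size, conclude it is zero, and then appeal to $p_{j_0}$ to rule out $Q=[P-\bar P_0]$. Your worry about the factor of~$3$ in the size estimate is in fact a fair criticism of the paper's own bound (which suppresses the constant); in practice this is harmless since one can absorb it into~$\gamma$ or demand $p_1\cdots p_m$ be a bit larger.
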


\begin{proof}
  Let $(k_1 : k_2 : k_3 : k_4)$ be the image of~$Q$ on the Kummer surface
  of~$J$, with coprime integers~$k_j$. If $Q$ mod~$p_j$ is on the image
  of the curve, then $p_j$ divides $k_1 b^2 - k_2 a b + k_3 a^2$. This
  integer has absolute value at most $e^{H' + \gamma} \max\{|a|,|b|\}^2$,
  so if it is divisible by $p_1, \dots, p_m$, it must be zero. This
  implies that $Q = [P - P_0]$ or $Q = [P - \bar{P}_0]$ for some $P \in C(\Q)$.
  If $P_0 \neq \bar{P}_0$, these two cases can be distinguished mod~$p_{j_0}$.
\end{proof}

The test whether a given coset of~$NJ(\Q)$ contains a point of canonical
height~\hbox{$\le H'$} comes down to a `closest vector' computation with respect
to the lattice $(NJ(\Q), \hat{h})$. Depending on the efficiency of this
operation, we can start eliminating elements from $A(S, N_k)$ already
at some earlier stage of the computation of~$A(S, N)$, thus reducing the
effort needed for the subsequent stages of the procedure.

If we want to reach a {\em very} large height bound, then we should at
some point switch over to the variant of the sieving procedure described
in Section~\ref{SubS:IntPt} below.

\medskip

Of course, there is a simpler alternative, which is to enumerate all
lattice points in $(J(\Q)/J(\Q)_{\tors}, \hat{h})$ of norm~$\le H'$
and then checking all corresponding points in~$J(\Q)$ whether they are
in the image of~$\iota$. (For this test, one conveniently uses reduction
mod~$p$ again, for a suitable set of primes~$p$.) Which of the two methods
will be more efficient will depend on the curve in question and on the
height bound~$H$. If the curve is fixed, then we expect our Mordell-Weil
sieve method to be more efficient than the short vectors enumeration
when $H$ gets large. The reason for this is that once $S$ and~$N$ are
sufficiently large, the set $A(S, N)$ is expected to be uniformly small
(most of its elements should come from rational points on~$C$), and so the
computation of~$A(S, N)$ for large~$N$ will not take much additional time.
On the other hand, the number of vectors of norm~$\le H'$ will grow
like a power of~$H'$, and the enumeration will eventually become infeasible.


\subsection{Integral Points on Hyperelliptic Curves} \label{SubS:IntPt}

What the preceding application really gives us is a lower bound~$H$
for the logarithmic height of any rational point that we do not know
(and therefore believe does not exist). If we can produce such a bound
in the order of $H = 10^k$ with $k$ in the range of several hundred,
then we can combine
this information with upper bounds for integral points that can be
deduced using linear forms in logarithms and thus determine the set
of integral points on a hyperelliptic curve: if $C : y^2 = f(x)$ is a 
hyperelliptic curve over~$\Q$, then it is possible to compute an upper
bound $\log |x| \le H$ that holds for integral points $(x,y) \in C$,
where $H$ is usually of a size like that mentioned above. See 
Sections 3--9 in~\cite{IntegralPoints}.

With the procedure we have described here, it is feasible to reach
values of~$N$ in the range of~$10^{100}$, corresponding to $H \approx 10^{200}$.
However, this is usually not enough --- the upper bounds provided by the 
methods described in~\cite{IntegralPoints} are more like~$10^{600}$.
The part of the computation that dominates the running time is the
computation of the image of~$C(\F_p)$ in the abstract finite abelian
group representing~$J(\F_p)$. To close the gap, we therefore
switch to a different sieving strategy that avoids having to compute
all these roughly~$p$ discrete logarithms in~$J(\F_p)$. We assume that
we know a subgroup $L \subset J(\Q)$ (initially this is $N J(\Q)$) such
that the image of $C(\Q)$ in~$J(\Q)/L$ is given by rational points we
already know on~$C$. We then try to find a smaller subgroup~$L'$ with
the same property. Let $q$ be a prime of good reduction, and recall
the notation $\phi_q : J(\Q) \to J(\F_q)$ for the reduction homomorphism.
Let $W \subset J(\Q)$ be the image of the known rational points on~$C$,
let $L' = L \cap \ker \phi_q$, and take $R \subset J(\Q)$ to be a complete
set of representatives of the {\em nontrivial} cosets of~$L'$ in~$L$.
We can now check for each $w \in W$ and $r \in R$
whether $\phi_q(w+r) \notin \iota\bigl(C(\F_q)\bigr)$. If this is the
case, then $W$ will also represent the image of~$C(\Q)$ in~$J(\Q)/L'$. 
Note that this test does not require
the computation of a discrete logarithm. We still need to find the
discrete logarithms of the images under~$\phi_q$ of our generators
of the Mordell-Weil group in order to find the kernel of~$\phi_q$,
but this is a small fixed number of discrete log computations for
each~$q$.

The Weil conjectures tell us that
$\#C(\F_q)/\#J(\F_q) \approx 1/q$ when $C$ has genus~$2$, so
the chance that we are successful in replacing $L$ with~$L'$ is in the range of
$(1 - \frac{1}{q})^{((L : L') - 1) \cdot \#W}$. This will be very small when
$(L : L') \cdot \#W$ is much larger than~$q$. Therefore we try to pick~$q$
such that $L/(L \cap \ker \phi_q)$ is nontrivial, but comparable with~$q$
in size. A necessary condition for this is that the part of the group
order of the image of~$\phi_q$ that is coprime to the index of~$L$
in~$J(\Q)$ is $\ll q$. Since it is much faster to compute $\#J(\F_q)$
than it is to compute~$\phi_q$ and its image and kernel, we simply
check~$\#J(\F_q)$ instead. When $q$ passes this test, we do the more
involved computation of the group structure of~$J(\F_q)$ and the
images of the generators of~$J(\Q)$ in the corresponding abstract group,
so that we can find the kernel of~$\phi_q$ and check the condition
on~$(L : L')$. If $q$ passes also this test, we check if we can replace
$L$ by~$L'$. Of course, we can abort this computation (and declare
failure) as soon as we find some $w+r$ as above such that $w+r$ maps
into $\iota\bigl(C(\F_q)\bigr)$.
See Section~11 of~\cite{IntegralPoints}. The idea for this second
sieving stage is due to Samir Siksek.

If $N$ is sufficiently large, then we will have a good chance of finding
enough primes~$q$ that allow us to go to a subgroup of larger index.
Also, once we have been successful with a number of primes, more primes
might become available for future steps, since the index of
$L \cap \ker \phi_q$ in~$L$ may have become smaller.

In the two examples treated in~\cite{IntegralPoints}, this second stage
of the sieving procedure was successful in reaching a subgroup of sufficiently
large index (up to~$10^{1800}$) to be able to conclude that any putative
unknown integral point must be so large as to violate the upper bounds
obtained earlier.


\subsection{Combination with Chabauty's method} \label{SubS:Chab}

Chabauty originally came up with his method in~\cite{Chabauty} in order
to prove a special case of Mordell's Conjecture. More recently, it has
been developed into a powerful tool that allows us in many cases to
determine the set of rational points on a given curve, see for example
\cite{Coleman,FlynnCh,StollTw,McCallumPoonen}.
We can combine it with the Mordell-Weil
sieve idea to obtain a very efficient procedure to determine~$C(\Q)$.
Examples of Chabauty computations supported by sieving can be found
in~\cite{BruinElkies,BruinCompMath,PSS}. In these examples it is the
Chabauty part that is the focus of the computation, and sieving has a
helping role. This is in contrast to what we
describe here, where sieving is at the core of the computation, and the Chabauty
approach is just used to supply us with a `separating' number~$N$
such that $C(\Q)$ injects into $J(\Q)/NJ(\Q)$.

Chabauty's method is applicable when the rank of~$J(\Q)$ is less than
the genus $g$ of~$C$. In this case, for every prime~$p$, there is a regular 
nonzero differential $\omega_p \in \Omega(C_{\Q_p})$ that annihilates the
Mordell-Weil group under the natural pairing 
$J(\Q_p) \times \Omega(C_{\Q_p}) \to \Q_p$. 
If $p$ is a prime of good reduction for~$C$, then a suitable multiple
of~$\omega_p$ reduces mod~$p$ to a nonzero regular differential
$\bar{\omega}_p \in \Omega(C_{\F_p})$.
If $P \in C(\F_p)$ is a point such that $\bar{\omega}_p$ does not
vanish at~$P$ (and $p \ge 3$), then there is at most
one rational point on~$C$ that reduces mod~$p$ to~$P$. 
See for example~\cite[\S\,6]{StollTw}.

On the other hand, if $N$ is divisible by the exponent of $J(\F_p)$,
then the rational points on~$C$ mapping via~$\iota$ into a given coset
of $NJ(\Q)$ in~$J(\Q)$ will all reduce mod~$p$ to the same point
in~$C(\F_p)$. So if $\bar{\omega}_p$ does not vanish at any point 
in~$C(\F_p)$, then we know that each coset of~$NJ(\Q)$ can contain the
image under~$\iota$ of at most one point in~$C(\Q)$. If there is no such
point and we assume the
Main Conjecture of~\cite{StollCov}, then we will be able to show using
the Mordell-Weil sieve that no point of~$C(\Q)$ maps to this coset.
If there is a point, we will eventually find it.

This leads to the following outline of the procedure.
\begin{steplist}{9.}
  \item[1.]
    Find a prime $p \ge 3$ of good reduction for~$C$ such that there
    is $\omega_p \in \Omega(C_{\Q_p})$ annihilating $J(\Q)$ and such that
    $\bar{\omega}_p$ does not vanish on~$C(\F_p)$.
  \item[2.]
    Find a suitable set $S$ of primes and a number~$N$ as described
    in Sections \ref{SubS:S} and~\ref{SubS:N} above, with the additional
    condition that the exponent of~$J(\F_p)$ divides~$N$.
  \item[3.]
    Compute $A(S, N)$ as described in Section~\ref{SubS:A} above.
  \item[4.]
    For each element $a \in A(S, N)$, verify that it comes from a rational
    point on~$C$. To do this, we take the point of smallest canonical
    height in the coset of~$NJ(\Q)$ given by~$a$ and check if it comes
    from a rational point on~$C$. If it does, we record the point.
  \item[5.]
    If the previous step is unsuccessful, we enlarge~$S$ and/or increase~$N$
    and compute a new $A(S, N)$ based on the unresolved members of the
    old $A(S, N)$. We then continue with Step~4.
\end{steplist}

We have implemented this procedure in~{\sf MAGMA} and used it on a large number
of genus~2 curves with Jacobian of Mordell-Weil rank~1. It proved to be
quite efficient: the computation usually takes less than two seconds and
almost always less than five seconds. For this
implementation, we assume that one rational point is already known and use
it as a base-point for the embedding~$\iota$. In practice, this is no
essential restriction, as there seems to be a strong tendency for small
points (which can be found easily) to exist on~$C$ if there are rational 
points at all. Of course, we also need to know a generator of the free part
of~$J(\Q)$, or at least a point of infinite order in~$J(\Q)$. If we only
have a point~$P$ of infinite order, we also have to check that the index of
$\Z \cdot P + J(\Q)_{\tors}$ in~$J(\Q)$ is prime to~$N$. If $P$ is not
a generator, then in Step~4, we could have the problem that the point we
are looking for is not in the subgroup generated by~$P$ (mod torsion).
In this case, the smallest representative of~$a$ is likely to look large,
and we should first try to see if some multiple of~$a$ is small, so that
it can be recognized. A version of this procedure is used by the
{\tt Chabauty} function provided by recent releases of~{\sf MAGMA}.

As mentioned in the discussion above, Steps 4 and~5 will eventually be
successful if the Main Conjecture of~\cite{StollCov} holds for~$C$. There is,
however, an additional assumption we have to make, and that is that Step~1
will always be successful. We state this as a conjecture.

\begin{Conjecture} \label{ConjDiff}
  Let $C/\Q$ be a curve of genus~$g \ge 2$ such that its Jacobian is
  simple over~$\Q$ and such that the Mordell-Weil rank $r$ is less than~$g$. 
  Then there are infinitely many primes~$p$
  such that there exists a regular differential 
  $\omega_p \in \Omega(C_{\Q_p})$ annihilating~$J(\Q)$ such that the
  reduction mod~$p$ of (a suitable multiple of)~$\omega_p$ does not
  vanish on~$C(\F_p)$.
\end{Conjecture}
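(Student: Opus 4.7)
The plan is to extract the space of Chabauty differentials, reduce it mod $p$, and then argue by a linear-algebra count that, for infinitely many primes, the reduced subspace is not trapped inside the union of vanishing-hyperplanes attached to the $\F_p$-points of $C$.

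\textbf{Setup.} First fix a regular proper model $\CC$ of~$C$ over an open subscheme of $\Spec\Z$ containing the primes of good reduction. For such a prime $p$, the module of regular differentials on $\CC_{\Z_p}$ is free of rank $g$ over $\Z_p$ and reduces to $\Omega(C_{\F_p})$. The $p$-adic integration pairing $J(\Q_p) \times \Omega(C_{\Q_p}) \to \Q_p$ becomes nondegenerate after quotienting out torsion on the left and extending scalars, so the annihilator $V_p \subset \Omega(C_{\Q_p})$ of the topological closure $\overline{J(\Q)} \subset J(\Q_p)$ has $\Q_p$-dimension at least $g - r \ge 1$. Clearing denominators in any integral basis of~$V_p$ yields a reduction $\bar V_p \subset \Omega(C_{\F_p})$ of the same dimension.

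\textbf{Linear-algebra reformulation.} Each $P \in C(\F_p)$ imposes the linear condition $\omega(P) = 0$, cutting out a hyperplane $H_P \subset \bar V_p$ (or, in exceptional cases, all of $\bar V_p$). The conclusion we want is
\[
  \bigcup_{P \in C(\F_p)} H_P \;\subsetneq\; \bar V_p\,.
\]
In dimension $d = \dim \bar V_p$, the union of $k$ distinct $\F_p$-hyperplanes through the origin has at most $1 + k(p^{d-1} - 1)$ elements, while the Weil bound gives $\#C(\F_p) \le p + 1 + 2g\sqrt{p}$. For $d \ge 2$ there is therefore ample room once $p$ is moderately large. The stringent case is $d = 1$: one must show that the unique (up to scale) Chabauty differential $\bar\omega_p$ does not have all of its $2g - 2$ zeros lying on $C(\F_p)$, and a simple probabilistic heuristic predicts this happens with density roughly $1/e$.

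\textbf{Density step.} The core remaining task is to show that $\bar V_p$ is ``generic enough'' in $\Omega(C_{\F_p})$ infinitely often. I would attempt to identify $V_p$ with a crystalline invariant of $J_{\Q_p}$ recording the $p$-adic logarithms of generators of $J(\Q)$, and then invoke a Chebotarev-style equidistribution argument for the Galois representations on $T_\ell J$. Simplicity of~$J$ over~$\Q$, combined with open-image results of Serre--Faltings type, yields a large monodromy group and should prevent $\bar V_p$ from being systematically trapped in any fixed proper subspace, or in the extremal configuration in which every $H_P$ equals~$\bar V_p$.

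The hard part, and presumably the reason this statement is left as a conjecture, is exactly this last step: linking the Chabauty subspace cleanly enough to a Galois-theoretic quantity to which Chebotarev applies, while also controlling the exceptional locus contributed by the $H_P$. A fully unconditional proof is likely to require a strong form of Serre's open-image conjecture or a delicate $p$-adic Hodge-theoretic input describing how $V_p$ varies with~$p$. For the algorithmic application in Section~\ref{SubS:Chab}, however, the conjecture is not needed in full strength: it suffices to exhibit a single suitable~$p$ for the specific curve at hand, which in practice is done by direct search.
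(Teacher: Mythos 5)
The statement is labelled as a \emph{Conjecture}, and the paper does not prove it either; what the paper supplies is a heuristic justification whose rigorous core is Lemma~\ref{LemmaDiff}. That lemma shows, by an inclusion--exclusion count of hyperplane sections of the canonical (or bicanonical, in the hyperelliptic case) image, together with an effective Chebotarev input from Murty--Scherk applied to the Galois group of the projection-from-a-point cover, that a \emph{random} nonzero regular differential on a smooth genus-$g$ curve over $\F_p$ fails to vanish on $C(\F_p)$ with probability at least $\tfrac{1}{3} + O(g p^{-1/2})$ (and expectation $\approx e^{-1}$, resp.\ $e^{-1/2}$). The paper then openly \emph{assumes}, as a heuristic, that the reduction $\bar\omega_p$ of the Chabauty differential behaves like such a random element as $p$ varies, and separately treats the case $r \le g-2$ by a Grassmannian count showing the base-point probability is $O(p^{-(d-1)})$.

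Your proposal is aligned with the paper's heuristic on the counting side: your ``linear-algebra reformulation'' and the split between $d\ge 2$ and $d=1$ are exactly the paper's split between $r\le g-2$ and $r=g-1$, and your $\approx 1/e$ figure matches the paper's $\alpha_g$. But you only gesture at the probability estimate where the paper actually carries out the inclusion--exclusion computation and bounds the contribution of collinear triples. The more substantial divergence is your ``density step'': you try to \emph{prove} the genericity of $\bar V_p$ by a Chebotarev/Serre--Faltings open-image argument on the $\ell$-adic Galois representations $T_\ell J$. The paper does not attempt anything of the kind; it treats randomness as a heuristic assumption, and the inability to control it is precisely why the statement is left as a conjecture. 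Moreover, as a proof strategy the Galois route has a real gap you do not address: the Chabauty subspace $V_p \subset \Omega(C_{\Q_p})$ is determined by the $p$-adic logarithms of the specific generators of $J(\Q)$, a transcendence-flavoured $p$-adic quantity that is not a priori captured by the ($\ell$-adic, $\ell\neq p$, or even $p$-adic \'etale) Galois image; turning ``big monodromy'' into equidistribution of the reduction $\bar V_p$ inside $\Omega(C_{\F_p})$ is exactly the missing link (compare the remark in the paper on Matev's $p$-adic analytic subgroup theorem, which handles only the rank-one case of the weaker Conjecture~\ref{Conjp}). So: your heuristic is consistent with the paper's, but the rigorous counting lemma is underdeveloped and the proposed route to unconditionality names the difficulty without giving a mechanism that would resolve it.
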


Of course, this can easily be generalized to number fields in place of~$\Q$.

We need to assume that the Jacobian is simple, since otherwise there can
be a differential killing the Mordell-Weil group that comes from one
of the simple factors. Such a differential can possibly
vanish at a rational point on the curve, and then its reductions mod~$p$ will vanish 
at an $\F_p$-point for all~$p$. For example, when $C$ is a curve of
genus~2 that covers two elliptic curves, one of rank zero and one of
rank~1, then the (essentially unique) differential killing the Mordell-Weil
group will be the pull-back of the regular differential on one of the
elliptic curves, hence will be a global object. Of course, in such a
case, we can instead work with one of the simple factors that still
satisfies the `Chabauty condition' that its Mordell-Weil rank is less
than its dimension.

We give a heuristic argument that indicates that Conjecture~\ref{ConjDiff}
is plausible. We first prove a lemma.

\begin{Lemma} \label{LemmaDiff}
  Let $C$ be a smooth projective curve of genus~$g \ge 2$ over~$\F_p$.
  The probability
  that a random nonzero regular differential $\bar\omega$ on~$C$
  does not vanish on~$C(\F_p)$ is at least $\frac{1}{3} + O(g p^{-1/2})$.
\end{Lemma}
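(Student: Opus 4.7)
The plan is to bound the probability by a third-order Bonferroni inequality applied to the hyperplane arrangement in $V := \Omega(C_{\F_p})$, which is a $g$-dimensional $\F_p$-vector space. For each $P \in C(\F_p)$ I set $H_P := \{\omega \in V : \omega(P) = 0\}$; since $|K|$ has no base points for $g \ge 2$, each $H_P$ is a genuine hyperplane, and the quantity to lower-bound is
\[
\frac{\#\bigl(V \setminus \bigcup\nolimits_P H_P\bigr)}{|V|-1}
 \;\ge\; 1 - S_1 + S_2 - S_3,
\]
where $S_k := \sum_{|I|=k}(|H_I|-1)/(|V|-1)$ and $H_I := \bigcap_{P \in I} H_P$.

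The central identification is through Riemann-Roch: writing $D_I := \sum_{P \in I} P$, we have $H_I = H^0(C, K - D_I)$, so $\dim_{\F_p} H_I = h^0(D_I) + g - 1 - |I|$. For $I$ in ``general position'' with $|I| \le g-1$, $h^0(D_I) = 1$ and the corresponding summand of $S_k$ equals $(p^{g-k}-1)/(p^g-1) \approx p^{-k}$. Combining with the Hasse-Weil bound $\#C(\F_p) = p+1 + O(g\sqrt p)$, I would show
\[
S_1 = 1 + O(gp^{-1/2}), \qquad S_2 = \tfrac12 + O(gp^{-1/2}),
\]
and aim for $S_3 = \tfrac16 + O(gp^{-1/2})$, which would give $1 - 1 + \tfrac12 - \tfrac16 = \tfrac13$ as the asymptotic bound. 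One recognises $\tfrac13$ as the partial sum $\sum_{k=0}^{2}(-1)^k/k!$ of the Poisson-like limit $1/e$, and indeed this partial sum is minimised (over $g \geq 2$) precisely at the truncation length appearing in a third-order Bonferroni.

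The hard part will be controlling $S_3$: exceptional triples $\{P,Q,R\}$ with $h^0(D_I) \ge 2$---i.e., triples lying in some $g_3^1$, or equivalently collinear triples on the canonical image---have $|H_I|$ larger than the generic value by a factor of~$p$, so each contributes $\approx p^{-2}$ to $S_3$ rather than $p^{-3}$. The key missing input is a Brill-Noether-style bound on the number of such exceptional triples defined over~$\F_p$, sufficient to show that their total contribution to $S_3$ is absorbed by the $O(gp^{-1/2})$ error term. For hyperelliptic $C$, where $H_P = H_{\sigma(P)}$ holds identically and the naive count of hyperplanes overcounts by a factor of two, the natural move is to re-index the arrangement by orbits of the hyperelliptic involution; this effectively halves $N$ and produces a bound strictly larger than $\tfrac13$, so the worst case is realised only in the non-hyperelliptic regime $g \ge 3$.
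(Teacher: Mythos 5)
Your framework---the third-order Bonferroni (truncated inclusion--exclusion) applied to the hyperplane arrangement in $\Omega(C_{\F_p})$, with Riemann--Roch identifying $H_I = H^0(C, K - D_I)$ and a Brill--Noether input bounding the exceptional (collinear) triples---is exactly how the paper treats the non-hyperelliptic case when $g \ge 4$. There the paper uses that a curve of genus $\ge 5$ carries at most one $g^1_3$ and a curve of genus $4$ at most two, so $t_3 \le 2(p+1) = O(p)$, and the exceptional contribution to $S_3$ is $O(t_3\,p^{-2}) = O(p^{-1})$, absorbed into the error term precisely as you want. Your hyperelliptic remark (re-index by orbits under the hyperelliptic involution, getting roughly half as many hyperplanes and hence a better bound, which the paper computes to be $\tfrac12 + O(gp^{-1/2})$) is also in the right spirit, although the paper's argument there is a simple first-order bound, not a Bonferroni refinement. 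A small slip: $\sum_{k=0}^{2}(-1)^k/k! = \tfrac12$; the correct truncation is $\sum_{k=0}^{3}(-1)^k/k! = \tfrac13$.

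The genuine gap is the non-hyperelliptic $g = 3$ case, which your argument as stated does not cover. When $g = 3$, a ``generic'' triple $I$ with $h^0(D_I) = 1$ has $\dim H_I = g - |I| = 0$, so it contributes \emph{nothing} to $S_3$; the whole of $S_3$ comes from the exceptional collinear triples. Moreover a smooth plane quartic has $\Theta(p^2)$ lines meeting $C(\F_p)$ in four points (each contributing four collinear triples), so $t_3 \asymp p^2$, not $O(p)$, and the Brill--Noether finiteness you invoke fails: there is a one-parameter family of $g^1_3$'s (projection from each point of $C$). The exceptional contribution to $S_3$ is therefore $\Theta(1)$, not $O(gp^{-1/2})$; your plan to absorb it into the error term cannot work. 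The Bonferroni bound can still yield $\tfrac13$ for $g = 3$, but only after one proves the precise asymptotic $t_3 = \tfrac16 p^2 + O(p^{3/2})$, which requires the effective Chebotarev input. The paper sidesteps Bonferroni entirely for $g = 3$ and does an exact count of $\ell_0, \ell_1, \ell_2, \ell_4$ via projections and the Murty--Scherk effective Chebotarev theorem, obtaining the sharper constant $\tfrac38$. So you need either to add that Chebotarev computation of $t_3$, or to treat $g = 3$ by a separate argument as the paper does.
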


\begin{proof}
  First assume that $C$ is not hyperelliptic.
  Then we can consider the canonical embedding $C \to \BP^{g-1}$.
  We have to estimate the number~$n$ of hyperplane sections that do not 
  meet the image of~$C(\F_p)$. If $g = 3$, then $C \subset \BP^2$ is a smooth plane
  quartic curve, and the nonzero regular differentials correspond to
  $\F_p$-defined lines in~$\BP^2$ (up to scaling). Let $\ell_k$ ($k = 0, 1, 2, 4$)
  be the number of such lines that contain exactly $k$ points of~$C(\F_p)$
  (with multiplicity). We want to estimate~$\ell_0$. In the following, we disregard
  lines that are tangent to~$C$ in an $\F_p$-rational point; their number
  is~$O(p)$ and so the result is unaffected by them. 
  
  Fix a point $P \in C(\F_p)$ and consider the $(p+1)$ lines through~$P$.
  Projection away from~$P$ gives a covering $C \to \BP^1$ of degree~3,
  which can be Galois only for at most four choices of~$P$ (since a
  necessary condition is that five tangents at inflection points of~$C$
  meet at~$P$, and there are at most~24 such tangents). These potential
  exceptions do not affect our estimate. For the other points, the
  covering has Galois group~$S_3$, and by results in~\cite{MurtyScherk},
  we have, denoting by $\ell_{k,P}$ the number of lines through~$P$ meeting
  $C(\F_p)$ in exactly~$k$ points:
  \[ \ell_{1,P} = \frac{p}{3} + O(\sqrt{p}), \quad
     \ell_{2,P} = \frac{p}{2} + O(\sqrt{p}) \quad\text{and}\quad
     \ell_{4,P} = \frac{p}{6} + O(\sqrt{p}).
  \]
  We obtain
  \begin{align*}
    \ell_1 &= \hphantom{\frac{1}{2}}
              \sum_P \ell_{1,P} + O(p) = \frac{p^2}{3} + O(p^{3/2}) \\
    \ell_2 &= \frac{1}{2} \sum_P \ell_{2,P} + O(p) = \frac{p^2}{4} + O(p^{3/2}) \\
    \ell_4 &= \frac{1}{4} \sum_P \ell_{4,P} + O(p) = \frac{p^2}{24} + O(p^{3/2}) \\
    \ell_0 &= p^2+p+1 - (\ell_1+\ell_2+\ell_4) = \frac{3}{8} p^2 + O(p^{3/2})\,,
  \end{align*}
  which shows that the probability here is $\frac{3}{8} + O(p^{-1/2})$.
  
  Now let $g \ge 4$ (still assuming that $C$ is not hyperelliptic).
  Let $t_3$ denote the number of triples of distinct points in~$C(\F_p)$
  that are collinear in the canonical embedding.
  By the inclusion-exclusion principle, we have for the number~$n$
  of hyperplane sections missing~$C(\F_p)$
  \begin{align*}
    n &\ge \#\BP^{g-1}(\F_p) - \#C(\F_p) \#\BP^{g-2}(\F_p)
            + \binom{\#C(\F_p)}{2}\,\#\BP^{g-3}(\F_p) \\
      &\quad{} 
        - \Bigl(\binom{\#C(\F_p)}{3} - t_3\Bigr)\,\#\BP^{g-4}(\F_p)
        - t_3 \#\BP^{g-3}(\F_p) \,.
  \end{align*}
  A collinear triple is part of a one-dimensional linear system of
  degree~3 on~$C$. It is known that there are at most two such linear systems
  when $g = 4$ (see, e.g., \hbox{\cite[Example~IV.5.5.2]{Hartshorne}}) and at most
  one when $g \ge 5$ (see, e.g.,~\cite[Example~I.3.4.3]{EMS}). This implies
  that $t_3 \le 2(p+1)$, and therefore that $t_3$ has no effect on the
  estimate below. Since $\#C(\F_p) = p + O(g p^{1/2})$, we find that
  \[ \frac{n}{\#\BP^{g-1}(\F_p)}
      \ge 1 - 1 + \frac{1}{2} - \frac{1}{6} + O(g p^{-1/2}) 
      = \frac{1}{3} + O(g p^{-1/2}) \,.
  \]
  If $C$ is hyperelliptic, the problem is equivalent to
  the question, how likely is it for a random homogeneous polynomial
  of degree~$g-1$ in two variables not to vanish on the image $X$ of~$C(\F_p)$
  in~$\BP^1(\F_p)$ under the hyperelliptic quotient map $C \to \BP^1$?
  The number~$n$ in this case can be estimated by
  \[ n \ge \#\BP^{g-1}(\F_p) - \#X\#\BP^{g-2}(\F_p) \]
  Since the size of~$X$ is $p/2 + O(g p^{1/2})$, we obtain here even
  \[ \frac{n}{\#\BP^{g-1}(\F_p)}
      \ge 1 - \frac{1}{2} + O(g p^{-1/2}) 
      = \frac{1}{2} + O(g p^{-1/2}) \,.
  \]
\end{proof}

We expect that arguments similar to that used in the non-hyperelliptic
genus~3 case can show that the probability in question is
\[ \alpha_g + O_g(p^{-1/2}) \quad\text{with}\quad
   \alpha_g = \sum_{k=0}^{2g-2} \frac{(-1)^k}{k!} \approx e^{-1}
\]
in the non-hyperelliptic case. In the hyperelliptic case, the corresponding
probability
\[ \beta_g + O_g(p^{-1/2}) \quad\text{with}\quad
   \beta_g = \sum_{k=0}^{g-1} \frac{(-1)^k}{2^k k!} \approx e^{-1/2}
\]
is obtained by an obvious extension of the argument used in the proof above.

We now consider a curve $C/\Q$ as in Conjecture~\ref{ConjDiff}, with
$r = g-1$. It seems reasonable to assume that the reduction $\bar\omega_p$
of the unique (up to scaling) differential $\omega_p$ annihilating~$J(\Q)$
behaves like a random
element of $\Omega^1(C/\F_p)$ as $p$ varies. By Lemma~\ref{LemmaDiff},
we would then expect even a set of primes~$p$ of positive density $\ge 1/3$
such that $\bar\omega_p$ does not vanish on~$C(\F_p)$.

When $r \le g-2$, the situation should be much better. We have at least
a pencil of differentials, giving rise to a linear system of degree~$2g-2$
and positive dimension on the curve over~$\F_p$. Unless this linear
system has a base-point in~$C(\F_p)$, effective versions of the Chebotarev density
theorem as in~\cite{MurtyScherk} show that there is a divisor
in the system whose support does not contain rational points, at least
when $p$ is sufficiently large. However, we still have to exclude the
possibility that the relevant linear system has a base-point in~$C(\F_p)$
for (almost) every~$p$.

If we mimick the set-up of Lemma~\ref{LemmaDiff} in the situation when
$g - r = d \ge 2$, then we have to look at the Grassmannian of
$(r-1)$-dimensional linear subspaces in~$\BP^{g-1}$: there is a
$d$-dimensional linear space of differentials killing~$J(\Q)$, and
the intersection of the corresponding hyperplanes in~$\BP^{g-1}$
is an $(r-1)$-dimensional (projective) linear subspace.
The set of such subspaces through a given point corresponds via
projection away from this point to~$\Grass(\BP^{r-2} \subset \BP^{g-2})$,
so by the simplest case of the inclusion-exclusion
inequality, we have for the number~$n$ of base-point free subspaces:
\[ n \ge \#\Grass(\BP^{r-1} \subset \BP^{g-1})
            - \#C(\F_p)\,\#\Grass(\BP^{r-2} \subset \BP^{g-2}) \,,
\]
and therefore a `density' of
\[ \frac{n}{\#\Grass(\BP^{r-1} \subset \BP^{g-1})}
     \ge 1 - \#C(\F_p)\, \frac{\#\Grass(\BP^{r-2} \subset \BP^{g-2})}%
                              {\#\Grass(\BP^{r-1} \subset \BP^{g-1})}
     = 1 - O(p^{-(d-1)}) \,.
\]
When $d = 2$, one is thus led to expect an infinite but very sparse set of
primes such that there is a base-point (since $\sum p^{-1}$ diverges),
whereas for $d > 2$, one would expect only finitely many such primes.

If we modify the algorithm in such a way that it considers (arbitrarily) `deep'
information at~$p$, then the requirement can be weakened to the following.

\begin{Conjecture} \label{Conjp}
  Let $C/\Q$ be a curve of genus~$g \ge 2$ such that its Jacobian is 
  simple and has Mordell-Weil rank $r < g$. Then there is a prime $p \ge 3$
  such that there exists a regular nonzero differential 
  $\omega_p \in \Omega(C_{\Q_p})$ annihilating~$J(\Q)$ such that 
  $\omega_p$ does not vanish on~$C(\Q)$.
\end{Conjecture}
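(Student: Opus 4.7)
\emph{Proof proposal.} My plan is to use Faltings's theorem to reduce to a finite list of hyperplane-avoidance conditions and then to split according to the Chabauty corank $d:=g-r$. By Faltings, $C(\Q)=\{P_1,\dots,P_s\}$ is finite (possibly empty). For any prime $p$ of good reduction, the pairing $J(\Q_p)\times\Omega(C_{\Q_p})\to\Q_p$ factors through the $p$-adic logarithm $J(\Q_p)\to\mathrm{Lie}(J)(\Q_p)\cong\Q_p^g$, so the annihilator $V_p\subset\Omega(C_{\Q_p})$ of $J(\Q)$ has $\Q_p$-dimension at least $d\ge 1$. Each $P_i$ defines a hyperplane $W_i\subset\Omega(C_\Q)$ of differentials vanishing at $P_i$ (fixing a local uniformizer at $P_i$ to make sense of evaluation), and the goal is to find a prime $p$ and a nonzero $\omega_p\in V_p\setminus\bigcup_{i=1}^s (W_i\otimes\Q_p)$.

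When $d\ge 2$, the conclusion is immediate: a vector space of dimension at least $2$ over the infinite field $\Q_p$ is never a finite union of proper subspaces, so $V_p\setminus\bigcup_i(W_i\otimes\Q_p)$ is non-empty for every prime $p$ of good reduction, and any element of this complement does the job. No careful choice of $p$ is needed.

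The remaining case $d=1$ is the real obstacle and is what forces the statement to be a conjecture rather than a theorem. Here $V_p$ is a line whose generator $\omega_p$ depends on $p$ only through the $p$-adic logarithms of a chosen generating set of $J(\Q)$, and we must show that this dependence is rich enough that the line avoids each of the finitely many hyperplanes $W_i$ for at least one prime $p$. Following the heuristic after Lemma~\ref{LemmaDiff}, if $\omega_p$ behaves like a uniformly random element of $\Omega(C_{\F_p})$, then the probability that it vanishes at the reduction of any fixed $P_i$ is of order $1/p$, so in a density sense almost all primes should work. A rigorous proof would plausibly proceed by establishing an equidistribution statement for the line $V_p$ inside $\BP(\Omega(C_\Q)\otimes\Q_p)$ as $p$ varies, with the simplicity hypothesis on $J$ being used to rule out the pathology that every $V_p$ lies in a common rational hyperplane (equivalently, that some fixed $P_i$ is accidentally in the zero locus of $\omega_p$ for every prime). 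The main obstacle is precisely this: there is no known technique for controlling how $V_p$ moves with $p$ in any such quantitative way, and ruling out these `conspiracies' is the step one would not expect to be able to carry out with current tools.
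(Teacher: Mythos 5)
This is a Conjecture; the paper does not prove it. The text following the statement offers only a heuristic (the probability of $\omega_p$ vanishing at a fixed rational point should be zero) and cites a $p$-adic analytic subgroup theorem of Tzanko Matev, which --- when $J$ is absolutely simple --- yields the conjecture for \emph{every} prime $p$ in the rank-one case. There is no proof in the paper for you to match, so the question is only whether your proposal is sound on its own terms.

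It is not: the case $d = g - r \ge 2$ is not ``immediate.'' Your finite-union argument shows that $V_p \setminus \bigcup_i (W_i \otimes \Q_p)$ is nonempty only under the tacit assumption that each $V_p \cap (W_i \otimes \Q_p)$ is a \emph{proper} subspace of $V_p$. If instead $V_p \subset W_i \otimes \Q_p$ for some $i$ --- that is, if every regular differential annihilating $J(\Q)$ vanishes at the rational point $P_i$ --- then the complement is empty and no $\omega_p$ exists at that prime. This is precisely the ``base-point'' phenomenon flagged in the paragraph before the conjecture, and ruling it out is the entire content of the conjecture in the $d \ge 2$ case: dually, one must show that for at least one prime $p$, the $\Q$-rational tangent line of $C$ at $P_i$ does not lie in the $\Q_p$-span of $\log_p J(\Q)$, which is a $p$-adic transcendence statement of exactly the same flavor as the obstacle you correctly identify in the $d = 1$ case. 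What larger $d$ buys is only a more favorable \emph{heuristic} density (the factor $p^{-(d-1)}$ from the paper's Grassmannian estimate), not an unconditional argument; your case split therefore does not reduce the conjecture to $d = 1$.
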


Heuristically, the probability that $\omega_p$ does vanish at a rational
point should be zero (except when there is a good reason for it, see above),
which lets us hope that the weaker conjecture may be amenable to proof.
In fact, Tzanko Matev (a PhD student of Michael Stoll) has
recently established a $p$-adic version of the
`analytic subgroup theorem' for abelian varieties (see~\cite{BW} for the background).
It states that when $J$ is absolutely simple, then
the $p$-adic logarithm of an algebraic point on~$J$
cannot be contained in a proper subspace of the tangent space $T_0 J(\Q_p)$
that is generated by algebraic vectors.
This implies that the statement of Conjecture~\ref{Conjp} is true
for every~$p$ when the Mordell-Weil rank is~$1$.


\section{Information at bad primes} \label{S:bad}

This and the following section discuss how to
extract the information that the Mordell-Weil sieve needs as input
in the specific case that $C$ is a curve of genus~2 over~$\Q$ 
(or a more general number field) and
we are not just interested in $C(\F_p$) and $J(\F_p)$ for a prime~$p$
of good reduction.

\medskip

In particular
when the rank is large, which in practice means $r \ge 3$, it becomes
important to use sufficient `local' information to keep the sizes of
the sets~$A(S, N_j)$ reasonably small. A valuable source of such information
is given by primes of bad reduction, as the group orders of suitable
quotients of $J(\Q_p)$ tend to be rather smooth. More precisely,
we would like to make use of the top layers of the filtration given
by the well-known exact sequences
\[ 0 \To J^0(\Q_p) \To J(\Q_p) \To \Phi_p(\F_p) \To 0 \]
and
\[ 0 \To J^1(\Q_p) \To J^0(\Q_p) \To \tilde{J}(\F_p) \To 0 \,. \]
Here $\Phi_p$ is the component group of the special fiber of the
N\'eron model of~$J$ over~$\Z_p$ and $\tilde{J}$ is the connected
component of the special fiber (and $J^1(\Q_p)$ is the kernel of reduction).

In this section, we describe how this information can be obtained when
$C$ is a genus~$2$ curve, $p$ is odd, and the given model of~$C$
is regular at~$p$. Here and in the following, we will use $J^1(\Q_p)$,
and later $J^n(\Q_p)$, to denote the kernel of reduction and the
`higher' kernels of reduction {\em with respect to the given model
of the curve}. If the model is not minimal in a suitable sense, then
our kernel of reduction will be strictly contained in the kernel of
reduction with respect to a N\'eron model. To be precise, for us,
$J^1(\Q_p)$ denotes the subgroup of points in~$J(\Q_p)$ whose reduction
mod~$p$ on the projective model in~$\BP^{15}$ given as described
in~\cite[Ch.~2]{CasselsFlynn} is the origin; see below. Of course,
this then changes the meaning of the quotients in the sequences above.

\medskip

But first, we will establish some general facts.
Let $k$ be a field with $\Char(k) \neq 2$, and let
\[ F(X,Z) = f_6 X^6 + f_5 X^5 Z + f_4 X^4 Z^2 + f_3 X^3 Z^3 + f_2 X^2 Z^4
             + f_1 X Z^5 + f_0 Z^6
\]
be a homogeneous polynomial of degree~6 with coefficients in~$k$. We do
not assume that $F$ is squarefree or even that $F \neq 0$. 
\begin{Definition} \strut
  \begin{enumerate}\addtolength{\itemsep}{1mm}
    \item Let $C_F$ be the curve given by the equation
          \[ Y^2 = F(X,Z) \]
          in the weighted projective plane with weights $(1, 3, 1)$ for the
          coordinates $X, Y, Z$, respectively.
    \item Denote by $J_F$ the scheme in~$\BP^{15}_k$ that is defined by the
          72 quadrics described in~\cite[Ch.~2]{CasselsFlynn}
          (see~\cite[jacobian.variety/defining.equations]{FlynnFTP}
          for explicit equations).
    \item Let $K_F$ be the surface in~$\BP^3_k$ that is defined by the
          Kummer surface equation as given in~\cite[Ch.~3]{CasselsFlynn},
          and denote by $\delta_F = \delta = (\delta_{1}, \dots, \delta_{4})$
          the polynomials giving the duplication map on the Kummer surface,
          see~\cite[kummer/duplication]{FlynnFTP}.
          
          \smallskip
          \noindent
          If $\delta(P) \neq 0$, then we write $\BP \delta(P) \in \BP^3$ for
          the point with projective coordinates $(\delta_1(P) : \ldots : \delta_4(P))$.
    \item Let $\tilde{D}_F \subset \BA^3_k \times \BA^4_k \times \BA^5_k$
          be the scheme of triples $(A, B, C)$ such that $A \neq 0$ and
          \[ F(X, Z) = A(X, Z) C(X, Z) + B(X, Z)^2 \,, \]
          where we set for $A = (a_0, a_1, a_2)$, $B = (b_0, \ldots, b_3)$
          and $C = (c_0, \dots, c_4)$
          \begin{align*}
            A(X, Z) &= a_2 X^2 + a_1 X Z + a_0 Z^2 \,, \\
            B(X, Z) &= b_3 X^3 + b_2 X^2 Z + b_1 X Z^2 + b_0 Z^3 \,, \\
            C(X, Z) &= c_4 X^4 + c_3 X^3 Z + c_2 X^2 Z^2 + c_1 X Z^3 + c_0 Z^4 \,.
          \end{align*}
          Let $D_F \subset \BP^2_k \times \BA^4_k$ be the image of~$\tilde{D}_F$
          under the projection to the first two factors, followed by the
          canonical map $\BA^3 \setminus \{0\} \to \BP^2$ on the first factor.
  \end{enumerate}
\end{Definition}

When $F$ is squarefree, then $C_F$ is a smooth curve of genus~2, $J_F$ is
its Jacobian, and $K_F$ is the associated Kummer surface. The scheme~$D_F$
then gives the possible Mumford representations of effective divisors of
degree~2 on~$C_F$; it therefore maps onto $J_F \setminus \{O\}$.
We will extend these relations to our more general setting.

The `origin' $O = (1:0:\ldots:0)$ is always a (smooth) point on~$J_F$.
The 16 coordinates on~$J_F$ split into 10 `even' and 6 `odd' ones;
the even coordinates are given (up to a simple invertible linear
transformation) by the monomials of degree~2 in the coordinates on~$K_F$.


Let us first look at the relation between $J_F$ and~$K_F$.

\begin{Lemma} \label{Lemma:genJK} \strut
  Projection to the ten even coordinates gives rise to a morphism
  $\kappa : J_F \to K_F$, which is a double cover.
\end{Lemma}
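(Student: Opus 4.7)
The plan is to work directly with the explicit equations from \cite{CasselsFlynn}. After a linear change of coordinates on $\BP^{15}$, I may assume the ten even coordinates of a point on $J_F$ are indexed as $\xi_{ij}$ with $1 \le i \le j \le 4$, and that they are cut out from the 72 defining quadrics by relations of the form $\xi_{ij}\xi_{kl} = \xi_{ik}\xi_{jl}$ (the $2\times 2$ minors of a symmetric $4\times 4$ matrix with entries $\xi_{ij}$), together with one quadric of ``Kummer type'' in the $\xi_{ij}$. The first set of relations says precisely that the vector $(\xi_{ij})$ lies on the image of the quadratic Veronese $v_2 : \BP^3 \to \BP^9$, which is an isomorphism onto its image; hence the ten even coordinates of any $P \in J_F$ come from a well-defined point $\kappa(P) \in \BP^3$. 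The Kummer-type quadric then becomes the defining equation of $K_F \subset \BP^3$ after pulling back along $v_2$, so $\kappa$ lands in $K_F$. This defines the morphism $\kappa$ globally, regardless of whether $F$ is squarefree.

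Next I would show that $\kappa$ has degree~$2$ by analyzing the remaining coordinates. Among the 72 quadrics are relations expressing each pairwise product $\eta_a \eta_b$ of the six odd coordinates as a specific quadratic form in the even coordinates (this is the standard ``products of odd = polynomial in even'' phenomenon for the Jacobian of a genus~2 curve, cf.\ \cite[Ch.~2]{CasselsFlynn}). Fix a point $Q = (k_1 : k_2 : k_3 : k_4) \in K_F$ and choose an affine lift; the even coordinates of any preimage in $J_F$ are then forced to be $\xi_{ij} = k_i k_j$. The product relations determine all $\eta_a \eta_b$ as explicit values, so the vector $(\eta_1, \dots, \eta_6)$ is pinned down up to a uniform sign. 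The two sign choices give two distinct preimages except where every $\eta_a$ vanishes; these are the $2$-torsion points, over which $\kappa$ is ramified. This proves $\kappa$ is a double cover.

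The part that requires care is ensuring the argument is genuinely scheme-theoretic and not merely set-theoretic: I would verify that the preimage scheme over a generic point of $K_F$ consists of two reduced points by exhibiting an explicit section over an affine open (for instance, inverting one of the $\xi_{ij}$ so that we can solve for all other $\xi$'s and then for each $\eta$ up to sign), and then argue flatness of $\kappa$ from the fact that both $J_F$ and $K_F$ are Cohen--Macaulay of the same dimension, so that constancy of fiber length on a dense open set propagates.

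The main obstacle will be the case of non-squarefree $F$. Here $C_F$ is singular (or non-reduced), $J_F$ need not be smooth, and $K_F$ is a degenerate Kummer surface, so one cannot appeal to the classical fact that the Kummer is the quotient by $[-1]$. My strategy to handle this is to note that all the claims above are statements about polynomial identities between the defining equations, which are stable under specialisation of $F$: the Veronese relations, the Kummer quadric, and the ``product'' relations hold identically in the coefficients $f_0, \dots, f_6$, so the factorisation of $\kappa$ through $K_F$ and the two-to-one nature of the fibres over the locus where some $\eta_a \eta_b$ is nonzero persist. Ramification and scheme-structure at degenerate fibres can then be read off from the same explicit formulas.
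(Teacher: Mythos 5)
Your proposal follows essentially the same route as the paper's proof: the even coordinates satisfy the Veronese relations (so factor through $\BP^3$), the remaining quadric among them pulls back to the Kummer quartic, and the pairwise products $\eta_a\eta_b$ of odd coordinates are determined as quadratic forms in the even coordinates, pinning down the odd coordinates up to a global sign and giving a degree-two cover. However, there is a gap in your justification that the projection is a \emph{morphism}. The Veronese relations alone are not enough: a vector of zeros trivially satisfies every $2\times 2$ minor relation, so to conclude that ``the ten even coordinates of any $P \in J_F$ come from a well-defined point in $\BP^3$'' you must first rule out the base locus, i.e.\ show that the even coordinates of a point of $J_F$ can never all vanish simultaneously. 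The paper does exactly this, and does it with the very relations you cite later: since each $\eta_a\eta_b$ (in particular each $\eta_a^2$) is a quadratic form in the ten even coordinates, their common vanishing would force every odd coordinate to vanish too, which is impossible for a point of $\BP^{15}$. So the fix is simply to invoke the ``products of odd $=$ quadratic in even'' relations one step earlier, at the start of the argument, rather than only in the double-cover step.

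Beyond that gap, what you add --- the remark that the relations among the $72$ quadrics are polynomial identities in $f_0,\dots,f_6$ and hence specialise to non-squarefree $F$, and the discussion of flatness and scheme-theoretic fibre length --- is reasonable supplementary rigour, but it does not change the underlying strategy: the paper states the lemma for an arbitrary homogeneous sextic $F$ and gives a terse proof that silently relies on the same stability under specialisation. Your instinct to be explicit about it is sound; it just isn't a departure from the paper's method.
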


\begin{proof}
  The monomials of degree~2 in the odd coordinates can be expressed
  as quadratic forms in the even coordinates. So if all the even
  coordinates vanish, the odd coordinates have to vanish, too.
  Therefore projection to the~$\BP^9$ spanned by the even coordinates
  is a morphism. The relations between the even coordinates are
  exactly those coming from the fact that the even coordinates
  come from the monomials of degree~2 in the coordinates of the~$\BP^3$
  containing~$K_F$, together with the quadratic relation coming
  from the quartic equation defining~$K_F$. Therefore the image
  of~$J_F$ in~$\BP^9$ is the image of~$K_F$ under the 2-uple embedding
  of~$\BP^3$ into~$\BP^9$ and therefore isomorphic to~$K_F$. This gives the
  morphism~$\kappa$. The fact stated in the first sentence of this proof
  then implies that $\kappa$ is a (ramified) double cover.
\end{proof}

Now let us consider the relation between $\tilde{D}_F$, $D_F$ and~$J_F$.

\begin{Lemma} \label{Lemma:DtoJ}
  There is a morphism
  \[ \phi : D_F \to J_F \setminus \{O\} \]
  that specializes to the representation of points on~$J_F$ mentioned
  above when $F$ is squarefree. The morphism~$\phi$ is surjective
  on $k$-points and makes the following diagram commute:
  \[ \xymatrix{ D_F \ar[r]^(0.4){\phi} \ar@/_/[rdd]_{\mathrm{pr}_1}
                 & J_F \setminus \{O\} \ar[d]^{\kappa} \\
                 & K_F \setminus \{\kappa(O)\} \ar[d]^{\mathrm{pr}'} \\
                 & \BP^2
              }
     \begin{array}[t]{cccc}
       \mathrm{pr}' : & K_F & \To & \BP^2 \\
                  & (x_1:x_2:x_3:x_4) & \longmapsto & {(x_3:-x_2:x_1)} \\[1em]
       \mathrm{pr}_1 : & \BP^2 \times \BA^3 & \To & \BP^2 \\
                  & ((a_0:a_1:a_2), B) & \longmapsto & (a_0:a_1:a_2)
     \end{array}
  \]
  Furthermore, $\phi(A, B) = \phi(A', B')$ if and only if $A = A'$ and
  $B(X,Z) \equiv B'(X,Z) \bmod A(X,Z)$.
\end{Lemma}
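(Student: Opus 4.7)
The plan is to define $\phi$ by the explicit Mumford-to-$\BP^{15}$ formulas given in \cite[Ch.~2]{CasselsFlynn} (cf.~\cite{FlynnFTP}), which express each of the 16 projective coordinates as a polynomial in $(a_i, b_j, c_k)$ and the $(f_l)$. Eliminating the $c_k$ via $F = AC + B^2$ gives 16 polynomials in $(a_i, b_j, f_l)$ that one checks to be jointly homogeneous in $(a_0, a_1, a_2)$ of a single degree; they therefore descend from $\tilde D_F$ to a morphism $\phi : D_F \to \BP^{15}$. Direct inspection shows that the first three Kummer coordinates $k_1, k_2, k_3$ of $\kappa(\phi(A,B))$ are $a_2$, $-a_1$, $a_0$ respectively. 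Since $A \neq 0$ by definition of $D_F$, the image avoids $O = (1:0:\cdots:0)$ (where $k_1 = k_2 = k_3 = 0$), and commutativity of the diagram is automatic: $\mathrm{pr}'(\kappa(\phi(A,B))) = (a_0 : a_1 : a_2) = \mathrm{pr}_1(A,B)$.

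To verify $\phi(D_F) \subset J_F$, one must check each of the 72 defining quadrics as a polynomial identity in $(a_i, b_j, f_l)$. These identities hold whenever $F$ is squarefree, since then $C_F$ is smooth of genus~2, $J_F$ is its Jacobian, and $\phi$ is the classical Mumford-representation embedding. Because squarefree sextics form a Zariski-dense subset of the space of all sextics, the identities propagate to arbitrary $F$.

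For surjectivity on $k$-points, given $P \in J_F(k) \setminus \{O\}$ with Kummer image $(k_1 : k_2 : k_3 : k_4) \in K_F(k)$, I would set $A := (k_3, -k_2, k_1)$; this $A$ must be nonzero, for $A = 0$ would force $\kappa(P) = \kappa(O)$, and the claim $\kappa^{-1}(\kappa(O)) = \{O\}$ is classical for squarefree $F$ (by smoothness of $C_F$) and passes to the general case by specialization. The six odd coordinates of $P$ are linear forms in $(b_0, \ldots, b_3)$ whose coefficients are polynomials in $(a_i, f_l)$; solving this linear system recovers a candidate $B$, and the vanishing of the 72 quadrics at $P$ is equivalent to the congruence $F \equiv B^2 \pmod{A}$, which produces the required $C$ and places $(A, B)$ in $D_F$ with $\phi(A,B) = P$.

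For the fiber description, for any linear form $Q \in k[X, Z]_1$ the substitution $B \mapsto B + AQ$, $C \mapsto C - 2QB - AQ^2$ preserves $F = AC + B^2$, and a direct coordinate-by-coordinate check (or a density argument from the squarefree case, where $(A,B)$ and $(A, B + AQ)$ visibly represent the same divisor class on $C_F$) shows that $\phi$ is invariant under this change. Conversely, $\phi(A,B) = \phi(A',B')$ forces $A = A'$ via the commutative diagram, and the linear reconstruction of $B$ from the odd coordinates modulo $(A)$ then gives $B \equiv B' \pmod{A}$. The main obstacle I anticipate is the transition from squarefree to arbitrary $F$: polynomial-identity claims (such as containment in $J_F$ and invariance under $B \mapsto B + AQ$) extend by Zariski density, but the set-theoretic claims (surjectivity on $k$-points and $\kappa^{-1}(\kappa(O)) = \{O\}$) require care and are most cleanly handled by the explicit linear reconstructions sketched above.
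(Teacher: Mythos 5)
Your proposal is correct and follows essentially the same route as the paper: explicit Mumford-representation coordinates give $\phi$, commutativity of the diagram follows from reading off the first three Kummer coordinates of $\kappa\circ\phi$, surjectivity comes from solving a linear system for $B$ out of the middle four coordinates of $P$, and the fiber description comes from the kernel of that linear system. The one real difference is methodological: where you invoke Zariski density / specialization from the squarefree case (for the containment $\phi(D_F)\subset J_F$, for the invariance of $\phi$ under $B\mapsto B+AQ$, and for $\kappa^{-1}(\kappa(O))=\{O\}$), the paper instead leans on direct verification against the explicit equations of \cite{FlynnFTP}: it records the last ten coordinates of $\phi(A,B)$ as polynomials in $(a_i,b_j)$, observes that the first six are determined by the last ten on $J_F$ when the last six are not all zero, and identifies the kernel of the linear system for $B$ as spanned by the coefficient vectors of $ZA$ and $XA$. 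Both routes work; the direct one has the small advantage of not needing to keep track of whether a density argument applies to a polynomial identity versus a set-theoretic fiber claim (a distinction you were right to flag). Your remark that the set-theoretic surjectivity step ``is most cleanly handled by the explicit linear reconstruction'' is exactly what the paper does, so you identified the right repair yourself.
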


\begin{proof}
  Let $(A, B) \in D_F$. Then $\phi$ can be given as
  \begin{align*}
    \phi(A, B)
      &= \bigl(* : * : * : * : * : * \\
      & \qquad {} : -b_2 a_0^2 + b_1 a_0 a_1 - b_0 (a_1^2 - a_0 a_2)
                 :  b_3 a_0^2 - b_1 a_0 a_2 + b_0 a_1 a_2 \\
      & \qquad {} : -b_3 a_0 a_1 + b_2 a_0 a_2 - b_0 a_2^2
                 :  b_3 (a_1^2 - a_0 a_2) - b_2 a_1 a_2 + b_1 a_2^2 \\
      & \qquad {} : a_0^2 : -a_0 a_1 : a_0 a_2 : -a_1 a_2 : a_2^2 : a_1^2 - 4 a_0 a_2
              \bigr) \,.
  \end{align*}
  One can check using the defining equations of~$J_F$ given at~\cite{FlynnFTP}
  that the first six coordinates are uniquely determined by
  the last ten when the last six are not all zero. It is also possible to
  write down expressions for the first six coordinates in terms of $A$, $B$
  and $C$, where $(A, B, C)$ is the point on~$\tilde{D}_F$ mapping to~$(A, B)$.
  The image of the point above under~$\kappa$ has the form $(a_2 : -a_1 : a_0 : *)$, 
  which shows that
  $\pr' \circ \kappa \circ \phi = \pr_1$. It remains to show that $\phi$ is
  surjective on $k$-points. Let $P \in J_F(k) \setminus \{O\}$, then
  $A = \pr'(\kappa(P)) \in \BP^2(k)$ is defined. Consider the middle four
  coordinates on~$J$ (n\textsuperscript{\,\underline{os}}~7 through~10).
  The expression for~$\phi(A,B)$
  given above gives rise to a system of linear equations for~$B$. The last
  six of the equations defining~$J_F$ ensure that the system has
  a solution~$B \in \BA^4(k)$. Then $\phi(A,B)$ agrees with~$P$ in the
  last ten coordinates; therefore we must have $\phi(A,B) = P$.
  
  To show the last statement, note first that $\phi(A, B) = \phi(A', B')$
  implies $A = A'$ (apply $\pr' \circ \kappa$). The kernel
  of the matrix giving the linear equations determining~$B$ is spanned
  by the coefficient tuples of $Z A(X,Z)$ and $X A(X,Z)$. This shows
  that $\phi(A, B) = \phi(A, B') \iff A(X,Z) \mid B(X,Z) - B'(X,Z)$.
\end{proof}

By the above, the fibers of the map $\phi : D_F \to J_F \setminus \{O\}$
are isomorphic to~$\BA^2$. We can remove this ambiguity at the cost of
restricting to a subscheme.

\begin{Lemma} \label{Lemma:Uj}
  Let
  \begin{align*}
    U_0 &= \{(A,B) \in D_F : a_0 = 1, b_0 = b_1 = 0\}\,, \\
    U_1 &= \{(A,B) \in D_F : a_1 = 1, a_0 a_2 \neq 1, b_1 = b_2 = 0\}\,, \\
    U_2 &= \{(A,B) \in D_F : a_2 = 1, b_2 = b_3 = 0\} \,.
  \end{align*}
  Then $\phi|_{U_j}$ is an isomorphism onto its image for each $j \in \{0,1,2\}$,
  and 
  \[ \phi(U_0) \cup \phi(U_1) \cup \phi(U_2) = J_F \setminus \{O\} \,. \]
\end{Lemma}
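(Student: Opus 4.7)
\begin{Proof}[Proof proposal]
The plan is to use the description of the fibers of $\phi$ provided by Lemma~\ref{Lemma:DtoJ}: two pairs $(A,B)$ and $(A',B')$ have the same image under $\phi$ if and only if $A = A'$ in $\BP^2$ and $A(X,Z)$ divides $B(X,Z) - B'(X,Z)$. Since $A$ has degree~$2$ and $B$ has degree~$3$, the module of cubics divisible by $A$ is spanned by $Z\,A(X,Z)$ and $X\,A(X,Z)$, giving exactly two degrees of freedom in the choice of~$B$. Together with the one-parameter scaling ambiguity in $A \in \BP^2$, this accounts for the fibers being isomorphic to $\BA^2$; the goal is to kill these three degrees of freedom on each $U_j$.

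First I would show $\phi|_{U_j}$ is injective. On $U_j$, the normalization $a_j = 1$ rigidifies~$A$ as an element of the affine chart of $\BP^2$ where $a_j \neq 0$. It then remains to check that the two conditions imposed on~$B$ pick out a unique coset representative modulo $A$. Writing $B'(X,Z) = B(X,Z) - (\alpha Z + \beta X)\,A(X,Z)$ and expanding, the conditions translate into a linear system for $(\alpha,\beta)$ whose matrix is: $\bigl(\begin{smallmatrix} 1 & 0 \\ a_1 & 1 \end{smallmatrix}\bigr)$ on $U_0$ (using $a_0=1$, requiring $b'_0=b'_1=0$), $\bigl(\begin{smallmatrix} a_2 & 1 \\ 1 & a_0 \end{smallmatrix}\bigr)$ on $U_1$ with determinant $a_0 a_2 - 1$, and $\bigl(\begin{smallmatrix} 1 & a_1 \\ 0 & 1 \end{smallmatrix}\bigr)$ on $U_2$. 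The additional hypothesis $a_0 a_2 \neq 1$ in the definition of $U_1$ is exactly what ensures invertibility there, so the representative is unique in every case.

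Next I would exhibit a regular inverse to $\phi|_{U_j}$, proving it is an isomorphism onto its image. Given $P \in J_F$ in the image, Lemma~\ref{Lemma:DtoJ} shows that $A$ can be recovered from the last six coordinates of~$P$ (equivalently as $(\pr' \circ \kappa)(P)$), and that $B$ is then determined up to the same ambiguity by linear equations in the middle four coordinates. On $U_j$, the normalization of $A$ by $a_j = 1$ and the explicit solution of the linear system above express $(A,B)$ as rational functions of the coordinates of~$P$ whose denominators do not vanish on $\phi(U_j)$; this yields the required inverse morphism.

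Finally I would prove the covering statement by case analysis on~$A := (\pr' \circ \kappa)(P) \in \BP^2$. If $a_0 \neq 0$, normalize to $a_0 = 1$ and use $U_0$; if $a_2 \neq 0$, use $U_2$; if $a_0 = a_2 = 0$, then necessarily $a_1 \neq 0$ and the normalization $a_1 = 1$ gives $a_0 a_2 = 0 \neq 1$, so the point lies in the image of $U_1$. This exhausts $\BP^2$, and for each such~$A$ the injectivity argument above produces a unique $B$ meeting the conditions of the relevant $U_j$. The only subtle point, and probably the main obstacle, is to be sure that these local inverses are genuinely regular (not merely set-theoretic) on $\phi(U_j)$, which amounts to tracking that the denominators arising in the explicit formulas coincide precisely with the open conditions built into the definitions of the $U_j$.
\end{Proof}
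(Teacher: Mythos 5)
Your proposal is correct and follows essentially the same route as the paper: use Lemma~\ref{Lemma:DtoJ} to describe the fibers of~$\phi$ as $B$-cosets modulo $A$, observe that the normalizations $a_j=1$, $b_j=b_{j+1}=0$ select a unique representative because a certain linear system is uniquely solvable (you make this explicit via the $2\times2$ matrices in the coset parameters $(\alpha,\beta)$, whose determinants are precisely the open conditions on the~$U_j$), and conclude the covering statement because the images of the~$U_j$ in~$\BP^2$ exhaust~$\BP^2$. The "main obstacle" you flag at the end is not actually a gap --- the denominators in your explicit inverse are exactly the determinants you computed, so regularity on $\phi(U_j)$ is immediate, just as the paper asserts without elaboration.
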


\begin{proof}
  In each case, the linear system giving $b_0, \dots, b_3$
  in terms of the middle four coordinates on~$J_F$, together with the
  conditions $b_j = b_{j+1} = 0$ has a unique solution, giving the inverse
  morphism $\phi(U_j) \to U_j$. The last statement then follows, since
  the images of the~$U_j$ in~$\BP^2$ cover~$\BP^2$.
\end{proof}

Now we can describe the smooth locus of~$J_F$.

\begin{Proposition} \label{Prop:Jsmooth}
  The origin $O$ is always a smooth point on~$J_F$. If $P \in J_F \setminus \{O\}$,
  write $P = \phi(A,B)$ with $(A,B) \in D_F$. Then $P$ is a singular point
  on~$J_F$ if and only if
  \begin{enumerate}\addtolength{\itemsep}{1mm}
    \item $A(X,Z)$ has a simple root (in~$\BP^1$) at a multiple root of~$F$, or
    \item $A(X,Z) = cL(X,Z)^2$ has a double root at a multiple root of~$F$ and
          $L(X,Z)^3$ divides $F(X,Z) - B(X,Z)^2$.
  \end{enumerate}
\end{Proposition}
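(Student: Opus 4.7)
The plan is to handle $O$ and $P \neq O$ separately. For $O = (1:0:\ldots:0)$, I would check by direct computation using the 72 defining quadrics from~\cite[Ch.~2]{CasselsFlynn} that the tangent space to $J_F$ at $O$ has dimension~$2$ for every $F$: in the affine chart $x_1 \neq 0$, the tangent space is the kernel of the $72 \times 15$ matrix of the linear parts of the quadrics after specializing $x_1 = 1$, and one checks (once and for all) that this kernel has dimension~$2$ independently of the coefficients $f_0, \ldots, f_6$.

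For $P \in J_F \setminus \{O\}$, Lemma~\ref{Lemma:Uj} reduces the question of smoothness at $P$ to that of $U_j$ at $\phi^{-1}(P)$ for some chart $U_j$ whose image contains~$P$. I would treat $U_0$ in detail; the other two charts are symmetric. In $U_0$, the quotient $C = (F - B^2)/A$ is forced, and $U_0 \subset \BA^4_{(a_1, a_2, b_2, b_3)}$ is cut out by the two coefficients of the remainder of $F - B^2$ modulo~$A$. A first-order deformation argument shows that the tangent space at $(A, B)$ consists of those pairs $(A_1, B_1)$ of the right shape satisfying $A_1 C + 2 B B_1 \equiv 0 \pmod{A}$. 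Since $\dim U_0 = 2$, the point $(A, B)$ is singular exactly when the linear map $(A_1, B_1) \mapsto A_1 C + 2 B B_1 \bmod A$ from $\BA^4$ to $\BA^2$ has rank less than~$2$.

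The last step is to analyze this rank according to the shape of~$A$. When $A$ has distinct roots $\alpha, \beta$ in $\BP^1_{\bar{k}}$, the mod-$A$ condition decouples via the Chinese Remainder Theorem into the two evaluations at $\alpha$ and at~$\beta$; the evaluation at $\alpha$ gives the zero functional on $(A_1, B_1)$ precisely when $B(\alpha) = C(\alpha) = 0$, and using $F = AC + B^2$ together with $A'(\alpha) \neq 0$ this is equivalent to $\alpha$ being a multiple root of~$F$. This is case~(i). When $A = c L^2$ with $L = X - \alpha Z$, the mod-$A$ condition records the value and the derivative of $A_1 C + 2 B B_1$ at~$\alpha$; after an elementary column reduction of the resulting $2 \times 4$ coefficient matrix, the rank drops below~$2$ precisely when $B(\alpha) = C(\alpha) = 0$. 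The triple vanishing $A(\alpha) = A'(\alpha) = B(\alpha) = 0$ then automatically gives $F(\alpha) = F'(\alpha) = 0$, and $C(\alpha) = 0$ together with $A = c L^2$ is exactly the condition $L^3 \mid F - B^2$. This is case~(ii).

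I expect the main technical obstacle to be the careful bookkeeping in the double-root case, specifically verifying that every $2 \times 2$ minor of the coefficient matrix vanishes precisely when $B(\alpha) = C(\alpha) = 0$. Secondary concerns are the uniformity-in-$F$ verification of the tangent space at~$O$, which requires inspecting the explicit form of the 72 quadrics, and the handling of roots of~$A$ at infinity, which is covered by working in $U_1$ or $U_2$ rather than $U_0$; in each case the argument follows the same template as for $U_0$.
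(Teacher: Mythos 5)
Your proposal takes essentially the same route as the paper: check smoothness at $O$ directly from the explicit equations, reduce the general case to the affine charts $U_j$ via Lemma~\ref{Lemma:Uj}, and determine on the chart where the Jacobian of the two defining equations drops rank. Where the paper normalizes $A$ by a $\GL_2$-transformation to $XZ$ or $X^2$ and then asserts the coordinate computation (``we easily check''), you instead derive the clean first-order condition $A_1C + 2BB_1 \equiv 0 \pmod{A}$ and analyze it intrinsically at a root $\alpha$ via CRT (distinct roots) or value-plus-derivative (double root) — this fills in the computation the paper elides, and the resulting case analysis matches the paper's conclusions ($f_0=f_1=0$ or $f_5=f_6=0$ for a simple root of~$A$ at a multiple root of~$F$; $f_0=f_1=f_2-b_1^2=0$ for the double-root case) exactly.
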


Note that the last condition means that the curve $Y = B(X,Z)$ is tangential
to a branch of~$C_F$ at the singular point $L(X,Z) = Y = 0$.

\begin{proof}
  The statement that $O \in J_F$ is smooth is easily checked using the
  explicit equations.
  The general statement is geometric, so we can assume $k$ to be algebraically closed.
  Then there is a transformation $\sigma \in \GL_2(k)$ such that
  $A^\sigma(X,Z) = XZ$ or~$X^2$. In the first case, we can take $Q \in U_1$,
  and we easily check that $Q$ is singular on~$U_1$ if and only if $f_0 = f_1 = 0$
  or $f_6 = f_5 = 0$, which means that $F$ has a multiple root at one of the
  two simple roots of~$A(X,Z)$, namely $0$ or~$\infty$. In the second case,
  we can take $Q \in U_2$, and we find that $Q$ is singular on~$U_2$ if and
  only if $f_0 = f_1 = f_2 - b_1^2 = 0$, which means that $F$ has a multiple
  root at the double root~$0$ of~$A(X,Z)$ and that $X^3 = L(X,Z)^3$ divides
  $F(X,Z) - B(X,Z)^2$. Since $\phi$ is an isomorphism on~$U_j$, $P$ is singular
  on~$J_F$ if and only if $Q$ is singular on~$U_j$.
\end{proof}

\begin{Definition}
  We denote by $D'_F$ the locus of points $Q \in D_F$ such that $\phi(Q)$
  is a smooth point on~$J_F$, and we write $J'_F$ for the subscheme of
  smooth points on~$J_F$.
\end{Definition}

According to Prop.~\ref{Prop:Jsmooth} above, the complement of $D'_F$ in~$D_F$
consists of the points $(A, B)$ satisfying one of the conditions in the proposition.

\begin{Lemma} \label{Lemma:Jirr}
  Assume that $k$ is algebraically closed. Then
  $J_F$ is reduced and irreducible except in the following two cases.
  \begin{enumerate}\addtolength{\itemsep}{1mm}
    \item $F = 0$. Then $J_F$ has two irreducible components. One is 
          $\phi(\BP^2 \times \{0\})$ and is not reduced,
          the other contains~$O$, and its
          remaining points are of the form $\phi(A, B)$ such that there
          is a linear form~$L$ with $A(X,Z) = cL(X,Z)^2$ and $L(X,Z) \mid B(X,Z)$.
    \item $F = H(X,Z)^2$ is a nonzero square. Then $J_F$ has three irreducible
          components, all of which are reduced.
          Two of them are given by $\phi(\BP^2 \times \{\pm H\})$,
          the third contains the origin~$O$.
  \end{enumerate}
\end{Lemma}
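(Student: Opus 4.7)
The plan is to reduce the question to the auxiliary scheme $D_F \subset \BP^2 \times \BA^4$, which records divisibility relations $A \mid F - B^2$, via the surjective morphism $\phi : D_F \to J_F \setminus \{O\}$ from Lemma~\ref{Lemma:DtoJ}. Since $\phi(A,B) = \phi(A,B')$ precisely when $A \mid B - B'$, closed irreducible subsets of $J_F \setminus \{O\}$ correspond to closed irreducible subsets of $D_F$ modulo this equivalence, and the question reduces to decomposing $D_F$ and then checking whether $O$ supplies a new component.

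For $F$ generic (nonzero and not a square), I would analyze the projection $D_F \to \BP^2$ to the $A$-factor: over a squarefree $A$ with roots $P_1, P_2 \in \BP^1$, the relation $B^2 \equiv F \pmod{A}$ becomes $B(P_j)^2 = F(P_j)$, giving exactly four sign choices modulo the $\BA^2$ of multiples of $A$. Because $F$ is not a square, these four sheets are permuted transitively by monodromy over the complement of the discriminant locus in $\BP^2$, so $D_F$ is irreducible and hence so is $J_F$. For Case~1 ($F = 0$), the condition collapses to $A \mid B^2$; since $k[X,Z]$ is a UFD, either $A$ is squarefree and $A \mid B$, yielding $Z_1 = \{(A, AM) : M \text{ linear}\} \cong \BP^2 \times \BA^2$, or $A = cL^2$ and $L \mid B$, yielding $Z_2 = \{(cL^2, LQ)\}$, a $\BA^3$-bundle over the rational normal conic in $\BP^2$. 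Both are irreducible of dimension four, neither contains the other, and $D_0 = Z_1 \cup Z_2$; since $\phi(Z_1) = \phi(\BP^2 \times \{0\})$ and $\phi(Z_2)$ consists of points with $A = cL^2$, $L \mid B$, we obtain the two claimed components, with the second containing $O$ in its closure. For Case~2 ($F = H^2$, $H \neq 0$), the relation becomes $A \mid (H-B)(H+B)$; the loci $B \equiv H \pmod{A}$ and $B \equiv -H \pmod{A}$ produce $\BP^2 \times \{\pm H\} \subset D_F$ via the equivalence, and the "mixed" locus (where $A$ factors nontrivially with one factor dividing $H-B$ and the other dividing $H+B$) supplies the third component, whose closure contains $O$.

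The main obstacle is the reducedness assertion, which cannot be read off from $\phi$ alone. For this I would work directly with the 72 quadrics of \cite[Ch.~2]{CasselsFlynn}, whose coefficients depend linearly on $(f_0, \dots, f_6)$. For $F$ squarefree, $J_F$ is the smooth Jacobian of a smooth genus-2 curve and hence reduced; for the remaining generic $F$ and for each of the three components in Case~2, I would verify reducedness by computing the Jacobian of the defining equations at a generic smooth point of the component and checking that the tangent space has the expected dimension~two. The nonreducedness of $\phi(\BP^2 \times \{0\})$ in Case~1 should be detected by the observation that, on setting all $f_i = 0$, the linear parts of the equations which in the generic case cut out this $2$-torsion-like locus vanish, leaving only a quadratic part and producing a doubled scheme structure along the locus. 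These are finite explicit calculations, though laborious.
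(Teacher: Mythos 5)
Your route to irreducibility of $J_F$ in the generic case (i.e.\ $F \neq 0$ and $F$ not a square) is sound, but it is worth recognizing that your monodromy argument on $D_F$ is the paper's argument in a different guise rather than an independent one. The paper exhibits a morphism $\psi : C_F^{(2)} \setminus S \to J_F$ with dense image and invokes the irreducibility of the symmetric square $C_F^{(2)}$, which holds exactly when $C_F$ is irreducible, i.e.\ when $F$ is not a square. Your four-sheeted cover of (an open part of) $\BP^2$, built from the four sign choices $(B(P_1), B(P_2)) = (\pm\sqrt{F(P_1)}, \pm\sqrt{F(P_2)})$ modulo $\BA^2$, \emph{is} an open subset of $C_F^{(2)}$ sitting over $(\BP^1)^{(2)} \cong \BP^2$, and transitivity of the monodromy is precisely the connectedness of $C_F^{(2)}$. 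What your route buys is a self-contained elementary derivation (no appeal to properties of symmetric powers) and a genuinely explicit decomposition of $D_0$ and $D_{H^2}$ in the degenerate cases where the paper writes only ``easy to check''; what the paper's route buys is brevity. Two points to tighten: (i) the \'etale locus in $\BP^2$ over which the monodromy acts is the complement of the discriminant of $A$ \emph{and} the resultant locus $\Res(A,F)=0$ — the sign-flips come from loops around the latter, and the argument works because $F$ not a square over the algebraically closed $k$ forces at least two roots of odd multiplicity; (ii) for reducedness, you and the paper are applying the same principle (a component with a smooth point is reduced), so instead of proposing laborious tangent-space computations with the $72$ quadrics you can simply observe, as the paper does, that $O$ is smooth, and that the two extra components for $F = H^2$ contain smooth points by Proposition~\ref{Prop:Jsmooth} (take $A$ coprime to $F$). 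Your heuristic for the non-reducedness of $\phi(\BP^2 \times \{0\})$ when $F = 0$ is plausible but, as you note, would still need to be carried out concretely.
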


\begin{proof}
  It is easy to check the claim in the two special cases. In all other cases,
  $C_F$ is reduced and irreducible. Consider the symmetric square $C_F^{(2)}$.
  Let $S \subset C_F$ be the (finite) set of singular points (given by the
  mutiple roots of~$F$) . Identify $S$ with its image in~$C_F^{(2)}$ under
  the diagonal map. There is a morphism
  \[ \psi : C_F^{(2)} \setminus S \to J_F \]
  that can be defined using the expressions for the coordinates on the Jacobian
  given in~\cite[Ch.~2]{CasselsFlynn}. Its image is
  \[ J_F \setminus \{\phi(A,B) : A(X,Z) = cL(X,Z)^2,
                                 \text{$L(P) = 0$ for some $P \in S$}\} \,,
  \]
  which is dense in~$J_F$. Since $C_F^{(2)}$ is irreducible, this implies
  that $J_F$ is irreducible as well. The component containing the origin
  is always reduced, since the origin is a smooth point.
\end{proof}

\begin{Remark}
  If $k$ is not algebraically closed, then there is the additional case
  $F = c H(X,Z)^2$ with $H \neq 0$ and a non-square $c \in k$.
  According to Lemma~\ref{Lemma:Jirr},
  $J_F$ has three geometric components. One is defined over~$k$ and contains
  the origin, the other two are conjugate over~$k(\sqrt{c})$ and do not
  have any smooth $k$-points.

  If we apply the argument used in the proof above
  in the case $F = H^2 \neq 0$, then $C_F$ has two
  components, therefore $C_F^{(2)}$ has three, and we see again that~$J_F$
  has three (reduced) irreducible components.
  
  From the description given in the proof, we see that $\psi$ extends to
  a morphism
  \[ \tilde{\psi} : \Bl'_S C_F^{(2)} \To J_F \,. \]
  Here $\Bl'_S C_F^{(2)}$ is obtained from~$C_F^{(2)}$ by replacing
  each point in~$S$ by a~$\BP^1$ in such a way that locally near
  a point in~$S$, $\Bl'_S C_F^{(2)}$ is the closure of the graph of the
  rational map giving the slope (in a suitable affine chart)
  of the line connecting the two points
  in the divisor corresponding to a point in~$C_F^{(2)}$.
  Let $\pi : C_F \to \BP^1$ be the canonical map, and denote by~$\pi^*$
  the induced map $\BP^1 \to C_F^{(2)}$. Then $\tilde{\psi}$ is an
  isomorphism away from $\pi^*(\BP^1)$ and contracts $\pi^*(\BP^1)$
  to the origin $O \in J_F$. We therefore have an isomorphism
  \[ \Bl'_S C_F^{(2)} \cong \Bl_O J_F \,. \]
  This generalizes the standard fact that $C_F^{(2)} \cong \Bl_O J_F$
  if $C_F$ is smooth.
\end{Remark}

\begin{Definition}
  We denote by $J^0_F$ the component of the smooth part~$J'_F$ of~$J_F$ that
  contains the origin~$O$. 
  We write $K^0_F$ for the open subscheme of~$K_F$ on which $\delta \neq 0$.
  Let $B_F$ denote the matrix of biquadratic forms
  as defined in~\cite[Ch.~3]{CasselsFlynn};
  see~\cite[kummer/biquadratic.forms]{FlynnFTP} for explicit expressions.
\end{Definition}

\begin{Proposition} \label{Prop:J0K0}
  We have $\kappa(J^0_F) = K^0_F$. Equivalently, a point $P \in J_F$ is smooth
  and on the component of the origin if and only if $\delta(\kappa(P)) \neq 0$.
\end{Proposition}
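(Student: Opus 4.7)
The plan is to establish the two inclusions $\kappa(J^0_F) \subseteq K^0_F$ and $K^0_F \subseteq \kappa(J^0_F)$ separately. The equivalent pointwise reformulation then follows once we observe that $J^0_F$ is invariant under inversion: inversion on $J_F$ acts by negating the six odd coordinates, fixes~$O$, and is an automorphism of $J_F$, so it preserves both the smooth locus $J'_F$ and the irreducible component through~$O$. Consequently $\kappa^{-1}(\kappa(P)) = \{\pm P\}$ lies entirely inside or entirely outside~$J^0_F$.

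For $\kappa(J^0_F) \subseteq K^0_F$, I would proceed by specialization. Given $P \in J^0_F$, deform $F$ through a one-parameter family $F_t$ over $k[[t]]$ whose generic fibre is squarefree and whose special fibre is our $F$. Since $P$ is a smooth point on the identity component of the special fibre, it extends to a section $P(t)$ of the family $J_{F_t}$. On the generic fibre $J_{F_t}$ is a genuine abelian variety, so the standard identity $\kappa(2P(t)) = \BP\delta(\kappa(P(t)))$ holds in $K_{F_t} \subset \BP^3$. Because each $\delta_i$ is polynomial in the Kummer coordinates and in the~$f_i$, this identity specializes to $t = 0$, and the right-hand side remains a genuine projective point of~$K_F$; in particular $\delta(\kappa(P)) \neq 0$.

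For the reverse inclusion I would argue contrapositively: if $P \in J_F \setminus J^0_F$, then $\delta(\kappa(P)) = 0$. By Proposition~\ref{Prop:Jsmooth} and Lemma~\ref{Lemma:Jirr}, such a $P$ is either singular or lies on an irreducible component of $J_F$ disjoint from $O$, and the latter occurs only when $F = 0$ or $F = cH^2$ (where the extra components are explicitly described as $\phi(\BP^2 \times \{0\})$ or $\phi(\BP^2 \times \{\pm H\})$). Each such $P$ has the form $\phi(A,B)$ for an $(A,B) \in D_F$ satisfying one of the explicit conditions listed in those statements, and by Lemma~\ref{Lemma:DtoJ} its first three Kummer coordinates are $(a_2 : -a_1 : a_0)$. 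It remains to check, by substitution into the explicit duplication polynomials at~\cite{FlynnFTP}, that each $\delta_i(\kappa(P))$ vanishes on each locus. Following the strategy of the proof of Proposition~\ref{Prop:Jsmooth}, I would first apply a $\GL_2$-transformation to place the relevant multiple root at $X = 0$ and normalize $A$ to one of the forms $XZ$ or $X^2$, so that only a handful of normal forms need to be verified.

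The main obstacle will be this case-by-case verification: the $\delta_i$ are quartic forms with many terms, and extracting the cancellation requires careful bookkeeping under algebraic hypotheses such as $f_0 = f_1 = 0$ with $a_0 = 0$, or $f_0 = f_1 = f_2 - b_1^2 = 0$ in the tangent case. Should the direct approach become unwieldy, a more conceptual alternative would be to identify the common zero locus of the $\delta_i$ on $K_F$ with $\kappa(J_F \setminus J^0_F)$ by a dimension/degree comparison, but I expect that the normal-form reduction above keeps the explicit route manageable.
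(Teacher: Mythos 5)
Your second inclusion is carried out by direct computation on the $\GL_2$-normal forms $A = XZ$ and $A = X^2$; this is precisely what the paper does, except the paper determines \emph{exactly} when $\delta(\kappa(\phi(A,B)))$ vanishes in terms of conditions on $F$, $A$, $B$ — namely the conditions of Proposition~\ref{Prop:Jsmooth} and Lemma~\ref{Lemma:Jirr} — so that one calculation delivers both inclusions. If you carry out your normal-form computation to that completeness (characterize the zero locus rather than merely verify vanishing on the singular and wrong-component loci), the forward inclusion $\kappa(J^0_F) \subseteq K^0_F$ follows at the same stroke, and you will not need the specialization step at all.

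That is fortunate, because the specialization argument has a genuine gap. Granting a section $P(t)$ of $J_{F_t}$ through $P$ with $F_t$ squarefree for $t$ generic, the identity $\kappa(2P(t)) = \BP\delta(\kappa(P(t)))$ over $k((t))$ does not entail $\delta(\kappa(P(0))) \neq 0$. Normalize $\kappa(P(t)) \in \BP^3(k[[t]])$ so its coordinates are not all divisible by $t$; then $\delta(\kappa(P(t)))$ is a $4$-tuple in $k[[t]]$, and nothing rules out this tuple being divisible by a positive power of $t$. The limit $\BP\delta(\kappa(P(t)))\big|_{t=0}$, which does equal $\kappa(2P(0))$ by flatness of the section of $K_{F_t}$, is obtained only after dividing out that common power of $t$; it can perfectly well be a genuine projective point while the vector $\delta(\kappa(P(0)))$ itself is zero. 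The assertion that ``the right-hand side remains a genuine projective point'' is thus exactly the statement being proved, and the argument is circular. (That $\delta$ does vanish somewhere on a singular $K_F$ is the content of the other inclusion, so there is no hope of a soft general reason why such cancellation cannot occur.)

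The inversion-invariance observation, giving $\kappa^{-1}(\kappa(J^0_F)) = J^0_F$, is correct and is what justifies passing between the set-theoretic and pointwise forms of the statement; in the paper's arrangement the pointwise statement is established directly by the normal-form computation, so this step is simply absorbed.
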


\begin{proof}
  We can again assume that $k$ is algebraically closed and that
  $\pr'(\kappa(P))$ is one of $(0 : 1 : 0)$ or $(1 : 0 : 0)$. ($O$ is always
  smooth, and $\delta_4(\kappa(O)) \neq 0$.) We represent $P$ as $\phi(Q)$
  with $Q = ((0 : 1 : 0), (b_0, 0, 0, b_3))$ or $Q = ((1 : 0 : 0), (b_0, b_1, 0, 0))$,
  respectively. Then we can use the description of singular points
  given in Prop.~\ref{Prop:Jsmooth} and the description of the components
  of~$J_F$ given in Lemma~\ref{Lemma:Jirr}.
  Writing down the polynomials $\delta_j$ evaluated at
  $\kappa(\phi(Q))$, we conclude after some fairly straightforward
  manipulations that in the first case, $\delta = 0$ if and only if
  $f_0 = f_1 = 0$ or $f_6 = f_5 = 0$, or there are $b_1$, $b_2$ such that
  $F = (b_3 X^3 + b_2 X^2 Z + b_1 X Z^2 + b_0 Z^3)^2$. The first two conditions
  mean as before that there is a singularity at $0$ or~$\infty$, and the
  third says that $P$ is not on the right component. In the second case,
  we find in a similar way that $\delta = 0$ if and only if $X^2 \mid F$
  and $X^3 \mid F - (b_1 X Z^2 + b_0 Z^3)^2$, or $F$ is a square and does
  not vanish at~$0$. The first condition means that $P$ is not smooth, the
  second says again that $P$ is not on the right component.
\end{proof}

This result is due (with a different proof) to Jan Steffen M\"uller,
a PhD student of one of us (Stoll).

Now we can state and prove the main result of this section.

\begin{Theorem} \label{Thm:J0GRoup}
  The scheme $J^0_F$ is a commutative algebraic group in a natural way.
  If we represent its nonzero elements by pairs $(A,B) \in D^0(F)$, then
  composition in the group can be performed by Cantor composition and
  reduction~\cite{Cantor}, except when both polynomials $A(X,Z)$ vanish
  at the same singular point of~$C_F$. Without loss of generality, this
  point is at $X = 0$; then we have
  \[ \phi(X^2, \lambda X Z^2) + \phi(X^2, \mu X Z^2)
        = \phi\Bigl(X^2, \frac{f_2+\lambda\mu}{\lambda+\mu} X Z^2\Bigr)
  \]
  where $F(X,Z) = f_2 X^2 Z^4 + f_3 X^3 Z^3 + \dots + f_6 Z^6$. If
  $\lambda + \mu = 0$, the result is the zero element in~$J^0_F$.
\end{Theorem}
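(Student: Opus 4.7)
The plan is to establish the group structure on~$J^0_F$ by specialization from the generic smooth case, then to identify the group law with Cantor composition and reduction away from the exceptional locus, and finally to handle the exceptional case by an explicit computation. Treat $f_0, \ldots, f_6$ as indeterminates, so that the Cassels--Flynn equations define a family~$\CJ$ over $\BA^7 = \Spec k[f_0, \ldots, f_6]$. Over the open dense locus where $\disc(F) \neq 0$, each fiber is a smooth Jacobian with its usual commutative group law. The Kummer-surface addition is already polynomial in the~$f_i$ (given by the biquadratic forms~$B_F$), and by Proposition~\ref{Prop:J0K0} the map $\kappa : J^0_F \to K^0_F$ is an unramified double cover; the sign ambiguity in lifting addition from~$K^0_F$ back to~$J^0_F$ is resolved using the odd coordinates of the $\BP^{15}$-model. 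Since the group axioms are closed conditions that hold on the dense squarefree locus, they extend by continuity to~$J^0_F$ in general.

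For the identification with Cantor's algorithm in the non-exceptional case, I would observe that Cantor composition and reduction express the output $(A_0, B_0)$ as a rational function of the inputs $(A_1, B_1), (A_2, B_2)$ and~$F$. Over the smooth fibers this rational function computes the group law by construction, so by continuity it does so on all of $J^0_F \times J^0_F$ wherever it is defined. Tracing through the algorithm, the composition step requires inverting $\gcd(A_1(X,Z), A_2(X,Z))$ locally, and this only fails when the two $A$'s share a common root at a singular point of~$C_F$---precisely the exceptional situation identified in the theorem.

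For the exceptional formula, use a $\GL_2$-change of coordinates to place the singular point at $X = 0$ (so $f_0 = f_1 = 0$). The cleanest route projects both summands to the Kummer surface: pairing $\kappa\bigl(\phi(X^2, \lambda X Z^2)\bigr)$ with $\kappa\bigl(\phi(X^2, \mu X Z^2)\bigr)$ via the biquadratic forms~$B_F$ and matching the result against the candidate expression $\kappa\bigl(\phi(X^2, \nu X Z^2)\bigr)$ yields, after routine manipulation, $\nu = (f_2 + \lambda \mu)/(\lambda + \mu)$. Equivalently, perturb the inputs to $A_i(X,Z) = X(X - \eps_i Z)$, run Cantor composition and reduction on the resulting coprime pair, and take the limit $\eps_i \to 0$. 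The case $\lambda + \mu = 0$ corresponds to the two points being inverse to one another on the smooth locus of~$C_F$ near the node (whose two branches carry slopes $\pm\sqrt{f_2}$), so their sum is the origin as claimed.

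The main obstacle is the first-paragraph specialization argument: one must check that the group law extends as a \emph{morphism} on $J^0_F \times J^0_F$, not merely as a rational map, and that its image stays inside~$J^0_F$ rather than escaping into the singular locus or one of the other components of~$J_F$ described in Lemma~\ref{Lemma:Jirr}. The identification $J^0_F = \kappa^{-1}(K^0_F)$ from Proposition~\ref{Prop:J0K0} is exactly what controls this: addition on~$K^0_F$ (computed by the polynomial biquadratic forms) maps $K^0_F \times K^0_F$ into~$K^0_F$, so the pulled-back addition stays inside~$J^0_F$, and the sign on the odd coordinates is determined by polynomial conditions so no indeterminacy arises on the smooth locus.
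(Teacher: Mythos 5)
Your strategy---specialize from the smooth case, control the group law via the Kummer surface and Proposition~\ref{Prop:J0K0}, and identify it with Cantor's algorithm away from the exceptional locus---is the same as the paper's, with one presentational difference: you work with the universal family over~$\BA^7$, whereas the paper lifts the particular~$F$ to a squarefree~$\tilde F$ over a complete DVR~$\CO$ and deduces the group structure on~$J^0_F(k)$ by exhibiting it as~$G^0/G^1$, the quotient of the points of~$J_{\tilde F}(L)$ reducing into the smooth identity component by the kernel of reduction. The DVR approach has a concrete payoff that your family picture leaves implicit: lifting two given elements of~$J^0_F(k)$ to~$G^0$ over~$\CO$ while staying in the same Cantor case, running Cantor over~$\CO$, and reducing mod~$\pi$ gives the identification with Cantor composition directly, and the same device furnishes the limit argument for the exceptional formula.

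There are, however, several gaps in your version. \textbf{(1)} Your crucial closure claim---that ``addition on~$K^0_F$, computed by the polynomial biquadratic forms, maps $K^0_F \times K^0_F$ into $K^0_F$''---is not a theorem you can invoke, and it is also not quite the right statement: the biquadratic forms~$B_F$ produce the symmetric rank-$\le 2$ matrix encoding the \emph{unordered pair} $\{\kappa(P+Q),\kappa(P-Q)\}$, not the image of a single point, so they do not define a binary operation on~$K_F$ by themselves. What you actually need is that for $\bar P, \bar Q \in J^0_F(k)$ the matrix $B_F(\kappa(\bar P),\kappa(\bar Q))$ is nonzero and the resulting Kummer points satisfy $\delta\neq 0$; the paper obtains this by citing Prop.~3.1 and Lemma~3.2 of~\cite{StollH2}. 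Without that (or an independent proof), ``the group axioms extend by continuity'' is not justified, because you have not shown that the group law extends to a morphism whose image lies in~$J^0_F$. \textbf{(2)} Your ``sign resolved by the odd coordinates'' is too quick. The paper first proves $\overline{P+Q}=\bar P$ for $Q\in G^1$ by a connectivity argument (the map $Q\mapsto\overline{P+Q}$ cannot take exactly two values on the residue disk of~$O$), which is what makes $G^1$ act trivially on reductions and turns $G^0/G^1$ into a group isomorphic to~$J^0_F(k)$. \textbf{(3)} The perturbation $A_i = X(X-\eps_i Z)$ with the \emph{fixed} degenerate~$F$ is not legitimate: for $(A_i,B_i)$ to lie in~$D_F$ one needs $A_i \mid F - B_i^2$, which fails for generic small~$\eps_i$. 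One must deform~$F$ itself to a squarefree~$\tilde F$ (as in the DVR lift) and run Cantor upstairs; your alternative Kummer-surface computation would work, but the ``perturb~$A$ only'' route does not. \textbf{(4)} Minor: $\kappa: J^0_F \to K^0_F$ is a \emph{ramified} double cover (branched over the two-torsion), not unramified, as stated in Lemma~\ref{Lemma:genJK}.
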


\begin{proof}
  Let $\CO$ be a complete discrete valuation ring with uniformizer~$\pi$, residue
  field~$k$ and field of fractions~$L$. We can then find a homogeneous polynomial
  $\tilde{F} \in \CO[X,Z]$ of degree six that is squarefree and
  whose reduction mod~$\pi$ is~$F$. We denote reduction mod~$\pi$ by a bar.
  Let $G = J_{\tilde F}(L)$, $G^0 = \{P \in G : \bar{P} \in J^0_F(k)\}$,
  and $G^1 = \{P \in G : \bar{P} = O \in J_F(k)\}$. Then for $P \in G^0$
  and $Q \in G^1$, we have $\overline{P+Q} = \bar{P}$. To see this, note
  that the images of $P \pm Q$ under~$\kappa$ are given by $B_{\tilde F}(P,Q)$.
  Since 
  $\overline{B_{\tilde F}(P,Q)} = B_F(\bar{P}, \bar{Q}) \sim \bar{P}^\top \bar{P}$
  (abusing notation by letting $\bar{P}$ denote a vector of projective coordinates
  for~$\bar{P}$), we must have $\kappa(\overline{P \pm Q}) = \kappa(\bar{P})$.
  This implies that $\overline{P + Q} = \bar{P}$ or $\overline{-P}$.
  The function $Q \mapsto \overline{P+Q}$ cannot take exactly two distinct
  values on the residue class of~$O$, so we must have $\overline{P+Q} = \bar{P}$.
  
  This implies that $G^1$ is a subgroup of $G$, that $G^1$ acts on~$G^0$
  and that (at least as sets) $G^0/G^1 \cong J^0_F(k)$.
  By a similar argument, we see that
  $G^0$ is also a subgroup of~$G$ (if $P, Q \in G^0$, then by 
  Prop.~3.1 of~\cite{StollH2}, $B_F(\bar{P},\bar{Q}) \neq 0$, which implies 
  by Lemma~3.2 of~\cite{StollH2} that $\overline{P \pm Q} \in J^0_F(k)$).
  This already shows that $J^0_F(k)$ has a group structure (and the same is
  true for $J^0_F(\ell)$ for every field extension $\ell$ of~$k$).
  
  To see that the group law on $J^0_F$ is given by Cantor's algorithm,
  we can lift two given elements to $G^0$ in such a way that
  we stay in the same case in the algorithm, then apply the algorithm over~$L$
  (in fact, over~$\CO$) and reduce mod~$\pi$. This works unless we are in
  the special case mentioned in the statement of the proposition. The formula
  in this case can be obtained by a suitable limit argument.
  This then also shows that $J^0_F$ is an algebraic group.
\end{proof}

The upshot of this result is that {\em we can do computations} in the
group~$J^0_F(k)$, much in the same way as we compute in the Jacobian
of~$C_F$ when $C_F$ is smooth.

\begin{Remark}
  If $k = \F_q$ and $q$ is odd, one can work out the order of the group 
  $J^0_F(k)$, depending on the factorization of~$F$. This leads to the table in 
  Figure~\ref{table}. The subscripts 
  give the degrees of the factors, which are assumed to be irreducible
  if they occur with multiplicity $> 1$, and to be pairwise coprime.
  $E$ is the genus~$1$ curve $y^2 = h_4(x,1)$ or $y^2 = h_3(x,1)$.
  `sq$(c)$' means that $c$ is a square in~$\F_q^\times$.
  
  If $F = H^2$ is a nonzero square, then by Lemma~\ref{Lemma:Jirr} $J_F$
  splits into three components, the two components not containing~$O$
  being given by $\phi(\BP^2 \times \{\pm H\})$.
  We denote their intersection with~$J'_F$ by $J^\pm_F$.
  In a similar way as above for the group structure of~$J^0_F$,
  we obtain well-defined maps
  \[ J^0_F \times J^+_F \to J^+_F, \quad J^0_F \times J^-_F \to J^-_F,
     \quad\text{and}\quad J^+_F \times J^-_F \to J^0_F
  \]
  that are compatible with the group structure of~$J^0_F$ and show that
  $J^+_F$ and~$J^-_F$ are principal homogeneous spaces under~$J^0_F$.
  Therefore the number of smooth points in~$J_F(k)$ is three times the cardinality
  of~$J^0_F(k)$. On the
  other hand, our addition is not defined on $J^+_F \times J^+_F$ or
  $J^-_F \times J^-_F$. (In this case, the $B$ polynomial one obtains
  in Cantor's algorithm vanishes along one of the components of~$C_F$,
  and we get an undefined~$A$.)
\end{Remark}

\begin{figure}[htb]
\[ \begin{array}{|c|c|c|c|}
     \text{factorization} & c & \text{order if $\operatorname{sq}(c)$} &
                                \text{order otherwise} \\\hline
     0 & - & q^2 & \\
     \ell_1^2 h_4 & \Res(\ell_1, h_4) & (q-1)\,\#E(\F_q)
                                               & (q+1)\,\#E(\F_q) \\
     \ell_1^3 h_3 & - & q\,\#E(\F_q) & \\
     \ell_1^2 m_1^2 h_2
       & \begin{array}{r@{}c@{}l}
           c  &=& \Res(\ell_1, h_2) \\
           c' &=& \Res(m_1, h_2)
         \end{array} 
         & \begin{cases} (q-1)^2 & \text{if $\operatorname{sq}(c')$} \\
                         q^2-1 & \text{else}
           \end{cases}
         & \begin{cases} q^2-1 & \text{if $\operatorname{sq}(c')$} \\
                         (q+1)^2 & \text{else}
           \end{cases} \\
      g_2^2 h_2 & \Res(g_2, h_2)
                & q^2 - 1 & q^2 + 1 \\
      c g_1^2 h_1^2 l_1^2 & \text{leading coeff.} & (q-1)^2 & (q+1)^2 \\
      c g_1^2 h_2^2 & \text{leading coeff.} & q^2-1 & q^2-1 \\
      c g_3^2 & \text{leading coeff.} & q^2+q+1 & q^2-q+1 \\
      g_1^3 \ell_1^2 h_1 & \Res(\ell_1, g_1 h_1)
                             & q(q-1) & q(q+1) \\
      \ell_1^4 h_2 & \Res(\ell_1, h_2) & q(q-1) & q(q+1) \\
      c g_1^3 h_1^3 & - & q^2 & \\
      c g_2^3 & - & q^2 & \\
      c g_1^2 h_1^4 & \text{leading coeff.} & q(q-1) & q(q+1) \\
      g_1^5 h_1 & - & q^2 & \\
      c g_1^6 & \text{leading coeff.} & q^2 & q^2 \\\hline
   \end{array}
\]
\caption{Group orders $\#J^0_F(\F_q)$.} \label{table}
\end{figure}

\medskip

As in the proof of Thm.~\ref{Thm:J0GRoup},
we now consider the situation that $\CO$ is a complete discrete valuation
ring with uniformizer~$\pi$, residue field~$k$ such that $\Char(k) \neq 2$
and field of fractions~$L$. We denote by $v : L^\times \to \Z$ the normalized
valuation. Let $F \in \CO[X,Z]$ be homogeneous of degree~6
and squarefree. The 72 quadrics defining $J_F$ have coefficients in~$\CO$; we obtain a
flat scheme over~$\Spec(\CO)$. We abuse notation slightly and set
\[ J^0_F(L) = \{ P \in J_F(L) : \bar{P} \in J^0_{\bar F}(k) \} \quad\text{and}\quad
   J^1_F(L) = \{ P \in J_F(L) : \bar{P} = \bar{O} \} \,.
\]
We will call $J^1_F(L)$ the {\em kernel of reduction}. The reader should be
warned that this notion depends on the given model of the curve and need
not coincide with the kernel of reduction defined in terms of a N\'eron
model of the Jacobian.

\begin{Lemma} \label{LemmaBad1}
  Consider $(A, B) \in D_F(L)$ with
  $A(X,Z) = X^2 + a_1 X Z + a_0 Z^2$ and $B(X,Z) = b_1 X Z^2 + b_0 Z^3$.
  \begin{enumerate}\addtolength{\itemsep}{1mm}
    \item If $a_0, a_1 \in \CO$, but $b_0$ and $b_1$ are not both integral, 
          then $P = \phi(A,B)$ is in the kernel of reduction.
    \item Now assume that $a_0$, $a_1$, $b_0$ and~$b_1$ are integral.
          If $\pi$ divides $f_0$, $f_1$ and~$a_0$, but $\pi^2$ does not 
          divide~$f_0$, then $\pi$ also divides $a_1$
          and~$b_0$, but does not divide $f_2 - b_1^2$.
  \end{enumerate}
\end{Lemma}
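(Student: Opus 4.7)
For Part~(2), I would expand the identity $F = AC + B^2$ and extract the coefficients of $Z^6$, $XZ^5$, and $X^2 Z^4$, obtaining
\begin{align*}
f_0 &= a_0 c_0 + b_0^2, \\
f_1 &= a_0 c_1 + a_1 c_0 + 2 b_0 b_1, \\
f_2 &= a_0 c_2 + a_1 c_1 + c_0 + b_1^2.
\end{align*}
From $\pi \mid f_0, a_0$, the first equation gives $\pi \mid b_0^2$ and hence $\pi \mid b_0$. If $\pi$ also divided $c_0$, then $\pi^2 \mid a_0 c_0$ and $\pi^2 \mid b_0^2$, so $\pi^2 \mid f_0$, contradicting the hypothesis; thus $\pi \nmid c_0$. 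Plugging $\pi \mid f_1, a_0, b_0$ into the second equation yields $\pi \mid a_1 c_0$, so $\pi \mid a_1$. Finally, $f_2 - b_1^2 = a_0 c_2 + a_1 c_1 + c_0 \equiv c_0 \not\equiv 0 \pmod \pi$.

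For Part~(1), my strategy is to work with the Kummer quotient $\kappa \colon J_F \to K_F$ from Lemma~\ref{Lemma:genJK}. By Lemma~\ref{Lemma:DtoJ}, we have $\kappa(P) = (1 : -a_1 : a_0 : K_4)$ in the projective coordinates of~$K_F$, where the fourth coordinate is given by
\[ K_4 = \frac{F_0(x_1, x_2) - 2 y_1 y_2}{(x_1 - x_2)^2} \]
(see \cite[Ch.~3]{CasselsFlynn}), with $x_1, x_2$ the roots of $A$, $y_i = B(x_i)$, and $F_0$ a specific integral biquadratic symmetric form. The containment $(A,B) \in D_F(L)$ is equivalent to $B^2 \equiv F \pmod A$, which forces $\alpha := 2 b_0 b_1 - a_1 b_1^2 \in \CO$ and $\beta := b_0^2 - a_0 b_1^2 \in \CO$. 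Using $\alpha$ and $\beta$ to eliminate $b_0 b_1$ and $b_0^2$ from $y_1 y_2 = a_0 b_1^2 - a_1 b_0 b_1 + b_0^2$ yields the identity
\[ K_4 = b_1^2 + \frac{F_0(x_1,x_2) - 2\beta + a_1 \alpha}{a_1^2 - 4 a_0}, \]
whose second-term numerator lies in $\CO$.

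The hypothesis that $b_0, b_1$ are not both integral, combined with $\beta \in \CO$, immediately gives $v(b_1) < 0$: if instead $v(b_1) \ge 0$, then $v(a_0 b_1^2) \ge 0$, forcing $v(b_0^2) = v(\beta + a_0 b_1^2) \ge 0$. Set $m = -v(b_1) \ge 1$. A short case analysis on $v(b_0)$ shows $v(K_4) < 0$: when $v(b_0) = -m$, the quotient $u = b_0/b_1$ is a unit, and $\alpha, \beta \in \CO$ force $u^2 \equiv a_0$ and $2u \equiv a_1$ modulo $\pi^{2m}$, giving $v(a_1^2 - 4 a_0) \ge 2m$ and hence $v(K_4) \le -2m$; when $v(b_0) > -m$, the $\beta$-constraint forces $v(a_0) \ge 2m$, which ensures the leading term $b_1^2$ dominates, so $v(K_4) = -2m$.

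Finally, since $\kappa(O) = (0:0:0:1)$ (verified by letting the divisor approach $\infty^+ + \infty^-$: one finds that as $T\to\infty$ with $x_i = T\xi_i$ and $y_i/x_i^3 \to \pm\sqrt{f_6}$, the tuple $(a_2 : -a_1 : a_0 : K_4)$ scales as $(1 : T : T^2 : T^4)$), and since $O$ is a 2-torsion point so that $\kappa^{-1}(\kappa(O)) = \{O\}$, rescaling $\kappa(P)$ by $\pi^{-v(K_4)}$ yields an integral primitive representative whose fourth coordinate is a unit and whose first three are divisible by $\pi$. Hence $\overline{\kappa(P)} = \kappa(\bar O)$, and so $\bar P = \bar O$, i.e., $P \in J^1_F(L)$. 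The main obstacle is the case analysis establishing $v(K_4) < 0$, particularly controlling the possible cancellation between $b_1^2$ and the second term in the subcase $v(a_1) \ge m$; this requires careful tracking of the interplay between the integrality of $\alpha, \beta$ and the valuations of $a_0, a_1, b_0$.
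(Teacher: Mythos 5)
Part (2) of your argument is correct and essentially identical to the paper's proof: you extract the same three coefficient equations from $F = AC + B^2$, deduce $\pi \mid b_0$, then $\pi \nmid c_0$ from $\pi^2 \nmid f_0$, then $\pi \mid a_1$, and finally $f_2 - b_1^2 \equiv c_0 \not\equiv 0 \bmod \pi$.

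For Part (1) you take a genuinely different route. The paper works geometrically: after deducing $v(b_1) < 0$ and $v(a_1^2 - 4a_0) > 0$ (your $\alpha, \beta$ are exactly the paper's $\alpha_1, \alpha_0$), it notes that the two points of the divisor reduce to the same $x$-coordinate, and that the secant $y = B(x,1)$ has non-integral slope so must reduce to a vertical line; if the two points reduced to a common non-Weierstrass point, the secant would reduce to the non-vertical tangent there, forcing $b_0, b_1$ integral. Hence the divisor reduces to a pair of opposite points and the divisor class reduces to zero. Your approach instead computes directly with the fourth Kummer coordinate and aims to show $v(K_4) < 0$, which, together with $a_0, a_1 \in \CO$ and $\kappa^{-1}(\kappa(O)) = \{O\}$, would indeed give $\bar P = \bar O$. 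The set-up (the formula $K_4 = b_1^2 + (F_0 + a_1\alpha - 2\beta)/(a_1^2 - 4a_0)$ with integral numerator, and the normalization argument at the end) is correct.

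The gap is the one you flag yourself: the case analysis does not actually prove $v(K_4) < 0$. In your case $v(b_0) = -m$, you obtain $v(a_1^2 - 4a_0) \ge 2m$, but from $K_4 = b_1^2 + N/(a_1^2 - 4a_0)$ with $v(b_1^2) = -2m$ and $v(N) \ge 0$, the second summand can have valuation exactly $-2m$, and nothing you have said excludes the two terms from cancelling to something of nonnegative valuation. The conclusion ``hence $v(K_4) \le -2m$'' does not follow. Your second case has a similar problem: $\beta = b_0^2 - a_0 b_1^2 \in \CO$ does not force $v(a_0) \ge 2m$ when $2v(b_0) = v(a_0) - 2m$, since the two summands of $\beta$ can then cancel. (You also never dispose of $v(b_0) < -m$, though that is easy from $\beta \in \CO$.) So the crucial valuation estimate is not established, and without it the reduction $\overline{\kappa(P)} = (0:0:0:1)$ is not proved. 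The paper's geometric argument avoids having to control this cancellation explicitly, which is precisely why it is the cleaner route here; your Kummer-coordinate approach would need an additional input (some control on $v(N)$ relative to $v(a_1^2 - 4 a_0)$, or use of the other root of the Kummer quartic) before it closes.
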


\begin{proof} \strut
  \begin{enumerate}
    \item We work in the affine chart $(X : Z) = (x : 1)$.
          Reducing $F(x,1)$ modulo $A(x,1) = x^2 + a_1 x + a_0$, we obtain a
          relation $y^2 = \alpha_1 x + \alpha_0$ that holds for the points
          in the divisor described by the pair of polynomials $(A, B)$.
          Since the coefficients of~$F$ and $a_0, a_1$ are integral,
          the same holds for $\alpha_0$ and~$\alpha_1$.
          If we square the relation $y = B(x,1)$ and reduce it mod~$A(x,1)$,
          we obtain
          \[ b_1(2b_0 - b_1 a_1) = \alpha_1\,, \qquad
             b_0^2 - b_1^2 a_0 = \alpha_0 \,.
          \]
          The second relation shows that $v(b_0) < v(b_1)$ is impossible, 
          so we must have
          $v(b_1) < 0$. Eliminating~$b_0$ from the two equations above gives
          $(a_1^2 - 4 a_0) b_1^2 \in \CO$, so the discriminant of~$A(x,1)$
          must be divisible by~$\pi^2$. Therefore the two points in the divisor
          reduce mod~$\pi$ to points with the same $x$-coordinate. If these
          points were not opposite, then $y = B(x,1)$ would reduce to the
          equation of the (non-vertical) tangent line at the point on~$C_F(k)$
          that both points reduce to, so $b_0$ and~$b_1$ would be integral,
          contradicting the assumptions. So the divisor reduces to the sum
          of two opposite points, hence $P$ reduces mod~$\pi$ to the origin.
    \item We know that $x^2 + a_1 x + a_0$ divides $F(x,1) - (b_1 x + b_0)^2$.
          Write $f_0 = \pi f'_0$, $f_1 = \pi f'_1$, $a_0 = \pi a'_0$. From
          \begin{align*}
             f_6 x^6 &+ \dots + f_2 x^2 + \pi f'_1 x + \pi f'_0
              - (b_1 x + b_0)^2 \\
              &= (x^2 + a_1 x + \pi a'_0)
                 (c_4 x^4 + c_3 x^3 + c_2 x^2 + c_1 x + c_0) \,,
          \end{align*}
          we get
          \begin{align*}
             \pi f'_0 - b_0^2 &= \pi a'_0 c_0 \\
             \pi f'_1 - 2 b_0 b_1 &= a_1 c_0 + \pi a'_0 c_1 \\
             f_2 - b_1^2 &= c_0 + a_1 c_1 + \pi a'_0 c_2 
          \end{align*}
          The first of these implies that $b_0 = \pi b'_0$ for some 
          $b'_0 \in \CO$. Since $f'_0$ is not divisible by~$\pi$ (by
          assumption), we then also see that
          $\pi \nmid a'_0 c_0$. The second equation then shows that $\pi$
          divides~$a_1$, and then the third equation tells us that
          $f_2 - b_1^2 \equiv c_0 \not\equiv 0 \bmod \pi$.
  \end{enumerate}
\end{proof}

This allows us to get a description of the reductions of points not in
the kernel of reduction when the curve is regular.

\begin{Corollary} \label{CorBad2}
  Assume that $C_F/\CO$ as above is regular. Let $P \in J_F(L) \setminus J^1_F(L)$.
  If $P = \phi(A, B)$
  with $A(X,Z) \in \CO[X,Z]$ primitive, then after adding a suitable multiple 
  of~$A(X,Z)$, $B(X,Z)$ has coefficients in~$\CO$, and
  the reduction $(\bar{A}, \bar{B})$ of $(A, B)$ mod~$\pi$ is
  in~$D'_{\bar F}(k)$, hence $\bar{P}$ is a smooth point on~$J_{\bar F}$.
  In particular, if $\bar{F}$ is not a square, then $J_F(L) = J^0_F(L)$.
\end{Corollary}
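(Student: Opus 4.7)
The plan is to proceed in two stages: first, ensure that after modifying $B$ modulo $A$, both $A$ and $B$ have integral coefficients; then use the classification of singular points from Proposition~\ref{Prop:Jsmooth} together with Lemma~\ref{LemmaBad1}(2) to rule out singularity of~$\bar P$.

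For the integrality step: since $A \in \CO[X,Z]$ is primitive, some coefficient $a_i$ is a unit in $\CO$. Rescaling the representative of $A \in \BP^2(L)$ and using Lemma~\ref{Lemma:DtoJ}, which lets us replace $B$ by $B + \lambda X A + \mu Z A$ for any $\lambda, \mu \in L$ without altering $\phi(A,B)$, we place $(A, B)$ into one of the charts $U_0, U_1, U_2$ of Lemma~\ref{Lemma:Uj}. When $a_2 \in \CO^\times$, normalize $a_2 = 1$ and kill $b_2, b_3$, landing in $U_2$: Lemma~\ref{LemmaBad1}(1) then forces $b_0, b_1 \in \CO$, for otherwise $P$ would lie in~$J^1_F(L)$. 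When $a_0 \in \CO^\times$ but $a_2 \notin \CO^\times$, the symmetric argument in $U_0$ (obtained by swapping $X$ and $Z$, which replaces $F$ by $F(Z,X)$) gives the same conclusion. In the remaining case $a_1 \in \CO^\times$ with $a_0, a_2 \in \pi\CO$, we land in $U_1$ with $B = b_3 X^3 + b_0 Z^3$, and a parallel computation reducing $B^2 \equiv F \pmod{A}$ at the two simple roots of $\bar A = XZ$ again forces $b_0, b_3 \in \CO$ unless $P \in J^1_F(L)$.

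For the smoothness step, we now have $(\bar A, \bar B) \in D_{\bar F}(k)$, and suppose for contradiction that $\phi(\bar A, \bar B)$ is singular. By Proposition~\ref{Prop:Jsmooth}, some multiple root $x_0$ of $\bar F$ is a root of $\bar A$ either as (1) a simple root, or (2) a double root $\bar A = c L^2$ with $L^3 \mid \bar F - \bar B^2$. Lifting $x_0$ to $\tilde x_0 \in \CO$ and applying the change of variable $X \mapsto X - \tilde x_0 Z$, which preserves integrality, primitivity of $A$, and regularity of $C_F$, we may assume $x_0 = 0$ and that we remain in chart $U_2$ (re-killing $b_2, b_3$ if necessary). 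Then $\pi \mid f_0, f_1$, and regularity of $C_F$ at the singular point $(0, 0)$ of $\bar C_F$ forces $\pi^2 \nmid f_0$. Lemma~\ref{LemmaBad1}(2) then applies: its conclusion $\pi \mid a_1$ contradicts case~(1), which requires $a_1 \in \CO^\times$, and its conclusion $\pi \nmid f_2 - b_1^2$ contradicts case~(2), which requires $X^3 \mid \bar F - \bar B^2$. Both cases being ruled out, $\phi(\bar A, \bar B)$ is smooth.

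For the final assertion, if $\bar F$ is not a square, Lemma~\ref{Lemma:Jirr} (applied geometrically) shows $J_{\bar F}$ is irreducible, hence $J'_{\bar F} = J^0_{\bar F}$. Any $P \in J_F(L)$ either lies in $J^1_F(L) \subset J^0_F(L)$ (the origin is in $J^0_{\bar F}$) or, by the preceding steps, has $\bar P \in J'_{\bar F}(k) = J^0_{\bar F}(k)$, so $P \in J^0_F(L)$. The main obstacle I anticipate is the $U_1$-case in the integrality step: Lemma~\ref{LemmaBad1}(1) is only formulated for the $U_2$-normalization, and the version needed when $\bar A = XZ$ has two distinct simple roots requires working at both roots simultaneously. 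The underlying mechanism---reading off $\bar b_0^2 \equiv \bar f_0$ and $\bar b_3^2 \equiv \bar f_6$ from $B^2 \equiv F \pmod A$---is the same, but carefully spelling out the details is the main piece of work not directly covered by the lemmas established earlier.
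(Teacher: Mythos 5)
Your proposal tracks the paper's proof quite closely: both use Lemma~\ref{LemmaBad1}(1) for integrality, Lemma~\ref{LemmaBad1}(2) together with the regularity hypothesis for smoothness, and the characterization of smooth points in Proposition~\ref{Prop:Jsmooth} (you by contradiction, the paper by directly verifying $(\bar A, \bar B) \in D'_{\bar F}(k)$; these are equivalent). However, the gap you flag at the end is a self-inflicted one. The paper reduces \emph{every} primitive $A$ to the chart where the $X^2$-coefficient is a unit by a single change of coordinates in $\GL_2(\CO)$, and then never needs a $U_1$-analogue of Lemma~\ref{LemmaBad1}(1). Concretely: if $a_2 \in \pi\CO$ but $A$ is primitive, apply $(X,Z) \mapsto (X, cX+Z)$ with $c \in \CO^\times$; the new $X^2$-coefficient is $a_2 + a_1 c + a_0 c^2$, which is a unit whenever $a_1$ or $a_0$ is (choosing, say, $c = 1$). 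This transformation preserves $\CO$-integrality, primitivity, and regularity of $C_F/\CO$, so Lemma~\ref{LemmaBad1} applies verbatim afterwards. Your explicit case split into $U_0, U_1, U_2$ thus creates work (the "parallel computation" at the two roots of $XZ$) that the paper's argument never has to do; you correctly identify this as the weak point, but the fix is to avoid the split altogether rather than to carry out the missing computation.

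Two further minor points. First, in the smoothness step you write "we remain in chart $U_2$" after the shift $X \mapsto X - \tilde x_0 Z$, but that presumes the $X^2$-coefficient was already a unit -- which again is guaranteed if one first performs the coordinate change just described, not if one is sitting in $U_0$ or $U_1$ from your Stage~1 case split. Second, for the final assertion, "$\bar F$ not a square over $k$" does not by itself make $J_{\bar F}$ geometrically irreducible: if $\bar F = c H^2$ with $c$ a non-square in $k^\times$, Lemma~\ref{Lemma:Jirr} applied over $\bar k$ gives three components. The paper's Remark following that lemma is what guarantees $J'_{\bar F}(k) = J^0_{\bar F}(k)$ in this residual case (the two conjugate components have no smooth $k$-points), and you should invoke it rather than claim geometric irreducibility.
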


\begin{proof}
  First assume that the coefficient of~$X^2$ in~$A(X,Z)$ is a unit. Then we
  can take $A(x, 1)$ to be monic and $B(x, 1)$ to be of
  degree at most~$1$. The integrality of~$B$ is given by 
  Lemma~\ref{LemmaBad1},~(1). If $\bar{A}$ vanishes at a singularity
  of~$C_{\bar F}$, then by a suitable shift, we can assume that
  the singularity is at $x = 0$ (we may have to extend the field for that;
  note that the shift will be by an integral element). We then have that
  $\pi$ divides $f_0$, $f_1$ and~$a_0$, which by Lemma~\ref{LemmaBad1},~(2),
  implies that $\pi$ also divides $a_1$ and~$b_0$ ($\pi^2 \nmid f_0$ because
  of the regularity assumption). This shows that $\bar{A}$ has a
  double root at the singularity (and hence, that no field extension was
  necessary) and that $\bar{B} = \lambda X Z^2$. We also know from the lemma that
  $\lambda^2 \neq \bar{f}_2$, which means exactly that the slope of the line 
  described by~$\bar{b}$ does not coincide with the slope of a branch
  of the curve at the singularity. Hence $(\bar{A}, \bar{B}) \in D'_{\bar F}(k)$.
  This implies that $\bar{P} \in J'_{\bar F}$.
  If $\bar{F}$ is not a square, then $J'_{\bar F}(k) = J^0_{\bar F}(k)$,
  and the last claim follows.
  
  The case when the coefficient of~$X^2$ in~$A(X,Z)$ is not a unit can be
  reduced to the general case discussed above by a suitable change
  of coordinates.
\end{proof}

\begin{Corollary} \label{Cor:RegQu}
  Assume that $C_F/\CO$ is regular and that $\bar{F}$ is not a square.
  Then the following sequence is exact.
  \[ 0 \To J^1_F(L) \To J_F(L)
      \stackrel{P \mapsto \bar P}{\To} J^0_{\bar F}(k) \To 0 \,.
  \]
\end{Corollary}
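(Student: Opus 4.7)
My plan is to dispose of each exactness statement in turn, leaving surjectivity as the only substantial step. Exactness at $J^1_F(L)$ and at $J_F(L)$ is built into the definitions, since $J^1_F(L)$ was introduced as exactly the set of $P \in J_F(L)$ with $\bar P = \bar O$. That the image of reduction lands in $J^0_{\bar F}(k)$ is the last sentence of Corollary~\ref{CorBad2}: the non-square assumption on $\bar F$ yields $J_F(L) = J^0_F(L)$, which is the required containment in our notation.

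For surjectivity, given $\bar P \in J^0_{\bar F}(k)$, I will construct a lift $P \in J_F(L)$. Since $J_F$ is projective over $\CO$ (sitting in $\BP^{15}_\CO$), the valuative criterion of properness identifies $J_F(L) = J_F(\CO)$, so it suffices to produce an $\CO$-point reducing to $\bar P$. The case $\bar P = \bar O$ is handled by taking $P = O$. Otherwise, Lemma~\ref{Lemma:Uj} yields a unique $(\bar A, \bar B) \in U_j(k)$ with $\phi(\bar A, \bar B) = \bar P$ for some $j \in \{0,1,2\}$. Each $U_j$ is naturally defined over $\CO$ as a closed subscheme of $\BA^4_\CO$ cut out by the two polynomial equations expressing $A(X,Z) \mid F(X,Z) - B(X,Z)^2$, once the linear normalizations ($a_i = 1$ and $b_j = b_{j+1} = 0$) defining $U_j$ are imposed.

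The critical step is to verify that $U_j \to \Spec(\CO)$ is smooth at $(\bar A, \bar B)$; formal smoothness together with the completeness of $\CO$ then produces a point $(A,B) \in U_j(\CO)$ lifting $(\bar A, \bar B)$, and $P = \phi(A,B)$ is the desired lift. Both fibers of $U_j/\CO$ at $(\bar A, \bar B)$ have dimension~$2$: the generic fiber is open in the smooth surface $J_F \otimes L$, while the special fiber is smooth of dimension~$2$ at $(\bar A, \bar B)$ because $\bar P$ lies in the smooth locus $J'_{\bar F}$ by Proposition~\ref{Prop:Jsmooth}. Since $U_j$ is cut out inside a four-dimensional affine space by two equations and both fibers achieve the expected codimension, $U_j/\CO$ is flat at $(\bar A, \bar B)$, and combined with smoothness of the special fiber this gives smoothness over $\CO$.

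The main obstacle is making this flatness-plus-smoothness verification clean. A concrete alternative I would fall back on is to observe that the $2 \times 4$ Jacobian matrix of the two defining equations has rank~$2$ at $(\bar A, \bar B)$---this is equivalent to smoothness of the special fiber there, and is explicit from the classification of singular points in Proposition~\ref{Prop:Jsmooth}---so some pair of columns is invertible mod~$\pi$, and Hensel's lemma for a system of equations in the complete local ring $\CO$ lifts $(\bar A, \bar B)$ directly, bypassing any appeal to general smoothness machinery.
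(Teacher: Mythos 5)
Your surjectivity argument is in the same spirit as the paper's: the paper simply invokes Hensel's Lemma together with the observation that points of $J^0_{\bar F}(k)$ are smooth, and your ``fallback'' paragraph (a $2\times4$ Jacobian matrix of full rank mod $\pi$, then Hensel for the two-equation system in the complete ring $\CO$) is exactly the concrete unpacking of that one-line appeal. The intermediate detour through flatness and ``expected codimension'' is more machinery than is needed and is the least airtight part of your write-up, but as you yourself note it can be bypassed, and the direct Hensel version is the one that matches the paper.

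There is, however, one piece missing. The statement is an exact sequence of abelian groups, and you never check that the reduction map $J_F(L) = J^0_F(L) \to J^0_{\bar F}(k)$ is a group homomorphism. This is not a formality in the present setting: $\bar F$ may be badly non-squarefree, $J^0_{\bar F}$ is not the special fibre of an abelian scheme, and its group law was \emph{constructed} in the proof of Theorem~\ref{Thm:J0GRoup} precisely by lifting points to $J_{\tilde F}(L)$ for a squarefree lift $\tilde F$, adding there, and reducing. The paper's proof explicitly appeals to that construction for the fact that reduction is a homomorphism with kernel $J^1_F(L)$; saying ``built into the definitions'' covers the identification of the kernel as a set but not the compatibility with addition. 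One sentence citing the proof of Theorem~\ref{Thm:J0GRoup} would close the gap; as written, you have established exactness only as a sequence of pointed sets.
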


\begin{proof}
  By Cor.~\ref{CorBad2}, we know that $J^0_F(L) = J_F(L)$, and by
  the proof of Thm.~\ref{Thm:J0GRoup}, we know that reduction mod~$\pi$
  gives a group homomorphism $J^0_F(L) \to J^0_{\bar F}(k)$ with kernel
  $J^1_F(L)$. This homomorphism is surjective because of Hensel's Lemma
  (recall that the points in~$J^0_{\bar F}(k)$ are smooth).
\end{proof}

\begin{Remark}
  When $C_F/\CO$ is regular, then the scheme obtained from $J_F/\CO$ by
  removing the singular points in the special fiber~$J_{\bar F}/k$ is
  the N\'eron model of $J_F/L$, and $J^0_{\bar F}/k$ is the connected
  component of the identity on the special fiber.
  
  If $C_F/\CO$ is not regular, then the smooth part of $J_F/\CO$
  still maps to the N\'eron model (by the universal property of the latter),
  but the image of $J^0_{\bar F}$ in the special fiber of the N\'eron model
  can be trivial or a one-dimensional subgroup.
\end{Remark}

\medskip

We now consider a genus~2 curve~$C = C_F$ over~$\Q_p$ given by a Weierstrass equation
$Y^2 = F(X,Z)$ over~$\Z_p$. We will drop the subscript~$F$ in the following.
By the above, we have 
$J^0(\Q_p)/J^1(\Q_p) \cong J^0_{\bar F}(\F_p)$, and the map is given by reducing
the standard representation modulo~$p$ (on elements that are not in
the kernel of reduction).

This gives us a handle on the quotient $J(\Q_p)/J^1(\Q_p)$ when $p$ is
odd, the model is regular and the special fiber of~$C$ has just one
component, cf.\ Cor.~\ref{Cor:RegQu}.

Since we have now established that we can use Cantor reduction on 
$J^0_{\bar F}(\F_p)$ in the same way as in the good reduction case, we can proceed 
and find the image $\iota\bigl(C(\F_p)\bigr) \subset J^0_{\bar F}(\F_p)$
in the same way as described in Section~\ref{SubS:S}.

\smallskip

Otherwise, that is, when $p = 2$, the model is not regular, or the special fiber
has several components,
we first need to find $J^0(\Q_p)$, or rather (for our purposes)
$J(\Q) \cap J^0(\Q_p)$. We can do this by an enumerative process.

In the following, $A$ is a finitely generated free abelian group,
$t$ is a test that determines whether a given element of~$A$ is in the subgroup.
In our application, $A = J(\Q)$, and $t$ tests whether a point~$P$ is in~$J^0(\Q_p)$.
According to Prop.~\ref{Prop:J0K0}, we can use
\[ t(P) \iff v_p(\delta(\kappa(P))) = 4 v_p(\kappa(P)) \]
(with the same choice of projective coordinates for $\kappa(P)$ on both sides),
or in the notation of~\cite{StollH2}, $t(P) \iff \epsilon_p(P) = 0$.

\pagebreak[1]
\strut\hrulefill\\[3pt]
{\sf GetSubgroup}($A$, $t$): \\
\strut\quad $g$ := $\emptyset$ \qquad
  // $g$ will contain the generators of the subgroup \\
\strut\quad $A'$ := $\{0\} \subset A$ \quad // known part of quotient group \\
\strut\quad {\sf for} $b \in \text{Generators($A$)}$: \\
\strut\qquad // find smallest multiple of $b$ such that $A' + b$ meets the subgroup \\
\strut\qquad $j$ := 1; \quad $b'$ := $b$ \\
\strut\qquad {\sf while} $\neg \exists a \in A': t(b'+a)$: \\
\strut\qquad\quad $j$ := $j+1$; \quad $b'$ := $b' + b$ \\
\strut\qquad {\sf end while} \\
\strut\qquad // note new subgroup generator \\
\strut\qquad $g$ := $g \cup \{b' + a\}$, where $a \in A'$ satisfies $t(b'+a)$ \\
\strut\qquad // extend $A'$ to get a set of representatives of the image of the group \\
\strut\qquad // generated by the first few generators of $A$ in the quotient \\
\strut\qquad $A'$ := $\{ a + i \cdot b : i \in \{0, \dots, j-1\}, a \in A' \}$ \\
\strut\quad {\sf end for} \\
\strut\quad {\sf return} $\langle g \rangle$ // a subgroup of $A$ \\[-3pt]
\strut\hrulefill

\smallskip

This allows us to find $J(\Q) \cap J^0(\Q_p)$ and hence the image of~$J(\Q)$
in $J(\Q_p)/J^0(\Q_p)$. It remains to determine the image of~$C(\Q_p)$ in 
this group. It is, however, better to find the image of~$C(\Q_p)$ 
in~$J(\Q_p)/J^1(\Q_p)$ directly, or rather, to find the subset of
$J(\Q)/\bigl(J(\Q) \cap J^1(\Q_p)\bigr)$ that is in the image of~$C(\Q_p)$.
For this we use the map to the dual Kummer surface described below in 
Section~\ref{S:deep}: for a representative $P \in J(\Q)$ of each element
of $J(\Q)/\bigl(J(\Q) \cap J^1(\Q_p)\bigr)$, we check if its image on the
dual Kummer surface satisfies $p^2 \mid \eta_4$ and $p \mid \eta_1 \eta_3 - \eta_2^2$.

The reason for working mod~$J^1(\Q_p)$ and not mod~$J^0(\Q_p)$ (which might
be more efficient) is that there does not seem to be a simple criterion
that tells us whether we are in $\iota\bigl(C(\Q_p)\bigr) + J^0(\Q_p)$.


\section{`Deep' information} \label{S:deep}

In this section, we work with genus~2 curves over~$\Q$ for simplicity.
Everything can easily be generalized to genus~2 curves over arbitrary
number fields.

Especially for small primes~$p$, we can hope to gain valuable information
by not just looking at~$J(\F_p)$ or, more generally, $J(\Q_p)/J^1(\Q_p)$,
but also into the kernel of reduction to some depth. If $J^n(\Q_p)$
(for $n \ge 1$) denotes the `$n$th kernel of reduction', i.e., the
subgroup of elements that consists of the $p^n \Z_p$-points of the formal group,
then we would like to determine (the image of~$J(\Q)$ in) 
$J(\Q_p)/J^n(\Q_p)$ and the image of~$C(\Q_p)$ in this group.

The first step is to find $J(\Q) \cap J^n(\Q_p)$. This can be done 
with the help of the $p$-adic logarithm
on the Jacobian. The power series of the formal logarithm up to terms
of degree~7 can be found on Victor Flynn's website~\cite[local/log]{FlynnFTP}.
If higher precision is needed, we perform a $p$-adic numerical integration,
as follows. We can represent a given point in the kernel of reduction
in the form $[P_1 - P_2]$, where $P_1$ and~$P_2$ are points on the curve
that reduce mod~$p$ to the same point. Assuming for simplicity that the
points have $p$-adically integral coordinates and do not reduce to a
Weierstrass point, we write 
$P_1 = (\xi + \delta, \eta_1)$, $P_2 = (\xi - \delta, \eta_2)$.
We then write the differentials $\omega_0 = dx/2y$ and $\omega_1 = x\,dx/2y$
as a power series in terms of the uniformizer $t = x - \xi$, times~$dt$,
and integrate this numerically from $t = -\delta$ up to $t = \delta$,
to the desired precision (note that $\delta$ has positive valuation).
Alternatively, we can use that on the Kummer surface, we have
\[ p^{n-1} \cdot P = \bigl(\lambda_1^2 \, p^{2n} + O(p^{3n})
                           : 2 \lambda_1 \lambda_2 \, p^{2n} + O(p^{3n})
                           : \lambda_2^2 \, p^{2n} + O(p^{3n}) : 1\bigr)
\]
where $(\lambda_1, \lambda_2)$ is the logarithm of~$P$. So to compute
the logarithm up to~$O(p^n)$, we multiply the point by~$p^{n-1}$ on the
Kummer surface to find the logarithm up to a sign. (If $p = 2$, we
need a few more bits of precision here.) We then fix the sign
by comparing with the first-order approximation we obtain from the 
functions $\lambda$ and~$\mu$ on the Jacobian, in the notation 
of~\cite[\S\,2]{CasselsFlynn}.

Given that we are able to compute the logarithm
\[ \log : J^1(\Q_p) \To (p \Z_p)^2 \]
to any desired accuracy, we compute the finite-index subgroup
$K_n = J(\Q) \cap J^n(\Q_p)$ of~$J(\Q)$ as follows. We assume
that $K_1$ is already given. We can therefore set up the group
homomorphism
\[ K_1 \stackrel{\log}{\To} (p \Z_p)^2
       \To \Bigl(\frac{p \Z_p}{p^n \Z_p}\Bigr)^2
       \cong \Bigl(\frac{\Z}{p^{n-1}\Z}\Bigr)^2 \,;
\]
then $K_n$ is just its kernel.

\medskip

The second and more time-consuming step is to find the image of
$C(\Q_p)$ in $J(\Q_p)/J^n(\Q_p)$. We assume again that the `flat'
information (i.e., the image of $C(\Q_p)$ in $J(\Q_p)/J^1(\Q_p)$)
is already known. For each point in the intersection of the images
of~$C(\Q_p)$ and of~$J(\Q)$ inside $J(\Q_p)/J^1(\Q_p)$, we then
have to find all its `liftings' to elements in the intersection
of the images of~$C(\Q_p)$ and of~$J(\Q)$ in $J(\Q_p)/J^n(\Q_p)$.

One approach would be to take some lifting $P_0$ in~$C(\Q_p)$, add
representatives of $J(\Q_p)/J^n(\Q_p)$ to it and see which lie
sufficiently close to~$C$. One practical problem lies in the word `add'.
By~\cite[Ch.~2,3]{CasselsFlynn}, the Jacobian can be embedded
into~$\BP^{15}$, and the sum $P+Q$ can be expressed in terms of biquadratic
forms in the coordinates of $P$ and~$Q$. For a given curve~$C$, these
forms can be determined using interpolation, but they can have several
thousand terms, so any subsequent computations based on them will be
rather slow.

The usual method of adding points on~$J$, following~\cite{Cantor},
essentially uses some affine part of the Jacobian. Problems with
denominators make it not well-suited for $p$-adic fixed precision
calculations.

We instead propose to use the Kummer surface and its dual
(see~\cite[Ch.~4]{CasselsFlynn}).
The hyperelliptic involution on~$C$ induces an involution on the
principal homogeneous space $\Pic^1_C$ of~$J$, and the quotient
of~$\Pic^1_C$ by this involution is again a quartic surface in~$\BP^3$.
An explicit equation is given by
\[ \psi(\eta_1, \eta_2, \eta_3, \eta_4)
    := \left|\begin{matrix}
              2 f_0 \eta_4 & f_1 \eta_4 & \eta_1 & \eta_2 \\
              f_1 \eta_4 & 2 f_2 \eta_4 - 2 \eta_1 & f_3 \eta_4 - \eta_2 & \eta_3 \\
              \eta_1 & f_3 \eta_4 - \eta_2 & 2 f_4 \eta_4 - 2 \eta_3 & f_5 \eta_4 \\
              \eta_2 & \eta_3 & f_5 \eta_4 & 2 f_6 \eta_4
             \end{matrix}\right| = 0\,,
\]
see~\cite[p.~33]{CasselsFlynn}. This model has the property that the
natural image of~$C \subset \Pic^1_C$ is given
by~$\eta_4 = 0$. Furthermore, if $P \in \Pic^1_C$
maps to $(\eta_1 : \eta_2 : \eta_3 : \eta_4)$ on the dual Kummer surface
and $Q \in J$ maps to 
$(\xi_1 : \xi_2 : \xi_3 : \xi_4)$ on the Kummer surface,
then $P \in C \pm Q$ if and only if 
$\xi_1 \eta_1 + \xi_2 \eta_2 + \xi_3 \eta_3 + \xi_4 \eta_4 = 0$.
We will denote the Kummer surface by~$\CK$ and the dual Kummer surface
by~$\CK^*$.

The group law on $J$ leaves its traces on~$\CK$. Suppose that $Q,R \in J$.
Write $\mathbf{y}=(\xi_1(Q),\ldots,\xi_4(Q))$ and 
$\mathbf{z}=(\xi_1(R),\ldots,\xi_4(R))$ for projective coordinates of
their images on~$\CK$. Following~\cite[Ch.~3]{CasselsFlynn}, there is
a matrix of biquadratic forms $B(\mathbf{y},\mathbf{z})=(B_{ij})$ such that
\[ 2B_{ij} = \xi_i(Q+R) \xi_j(Q-R) + \xi_i(Q-R) \xi_j(Q+R) \,. \]
The action of $J$ on~$\Pic^1_C$ can be similarly described on~$\CK^*$.
Suppose that $Q \in J$ and $P \in \Pic^1_C$ and that $\mathbf{x} = \eta(P)$ and 
$\mathbf{y} = \xi(Q)$ are projective coordinates for their images on~$\CK^*$
and on~$\CK$, respectively. There is now a symmetric matrix of biquadratic forms 
$A(\mathbf{x},\mathbf{y}) = (A_{ij})$ such that
\[ 2A_{ij} = \eta_i(P+Q) \eta_j(P-Q) + \eta_i(P-Q) \eta_j(P+Q) \,. \]
The following result lets us compute $A$ from~$B$ rather easily.
We assume that $B$ has been scaled so that $B_{44}(0,0,0,1; 0,0,0,1) = 1$
and $A$ has been scaled so that $A_{11}(1,0,0,0; 0,0,0,1) = 1$.

\begin{Lemma}
  Let $\mathbf{x}$ be coordinates of the image of~$P \in \Pic^1_C$ on~$\CK^*$,
  and let $\mathbf{y}$, $\mathbf{z}$ be coordinates of the
  images of $Q, R \in J$ on~$\CK$. Then, considering
  $\mathbf{x}$, $\mathbf{y}$, $\mathbf{z}$ as row vectors,
  \[ \mathbf{x} \, B(\mathbf{y}, \mathbf{z}) \, \mathbf{x}^\top
      = \mathbf{z} \, A(\mathbf{x}, \mathbf{y}) \, \mathbf{z}^\top \,.
  \]
\end{Lemma}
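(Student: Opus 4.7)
The plan is to unfold both sides using the defining relations of $B$ and $A$, reducing the identity to a symmetric statement about products of pairings, and then to invoke the line bundle picture on $\Pic^1_C \times J$ to compare the two products.

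First I would contract the defining relation $2B_{ij}(\mathbf{y},\mathbf{z}) = \xi_i(Q+R)\xi_j(Q-R) + \xi_i(Q-R)\xi_j(Q+R)$ with $\eta_i(P)\eta_j(P) = x_i x_j$. The two terms produce the same product of linear forms, so
\[ \mathbf{x}\, B(\mathbf{y},\mathbf{z})\, \mathbf{x}^\top \;=\; \bigl(\eta(P)\cdot \xi(Q+R)\bigr)\,\bigl(\eta(P)\cdot \xi(Q-R)\bigr), \]
where $\cdot$ denotes the standard bilinear pairing $\sum_i u_i v_i$ between the coordinate vectors on~$\CK^*$ and~$\CK$. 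The same calculation applied to $A$, this time contracted with $\xi_i(R)\xi_j(R) = z_i z_j$, gives
\[ \mathbf{z}\, A(\mathbf{x},\mathbf{y})\, \mathbf{z}^\top \;=\; \bigl(\xi(R)\cdot \eta(P+Q)\bigr)\,\bigl(\xi(R)\cdot \eta(P-Q)\bigr). \]
So the lemma reduces to the symmetric identity
\[ \bigl(\eta(P)\cdot \xi(Q+R)\bigr)\,\bigl(\eta(P)\cdot \xi(Q-R)\bigr) \;=\; \bigl(\xi(R)\cdot \eta(P+Q)\bigr)\,\bigl(\xi(R)\cdot \eta(P-Q)\bigr). \]

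Next I would interpret this identity geometrically. By the duality of $\CK$ and~$\CK^*$ recalled just above the lemma, the pairing $(P', Q') \mapsto \eta(P') \cdot \xi(Q')$ is a global section~$s$ of a line bundle~$\mathcal{L}$ on $\Pic^1_C \times J$ whose zero locus is exactly $\{(P',Q') : P' + Q' \in \iota(C) \text{ or } P' - Q' \in \iota(C)\}$. Each side of the above identity is the product of two pull-backs of $s$ under morphisms $\Pic^1_C \times J \times J \to \Pic^1_C \times J$ given by $(P,Q,R) \mapsto (P, Q\pm R)$ or $(P\pm Q, R)$, respectively. Because $(P \pm Q) \pm R = P \pm (Q \pm R)$, the sums of the pull-back divisors agree on both sides, and both products are sections of the same pull-back line bundle on $\Pic^1_C \times J \times J$, vanishing to the same orders along the four components $\{P \pm Q \pm R \in \iota(C)\}$. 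Hence the two products differ by at most a global multiplicative constant.

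Finally I would fix this constant by specialization. Setting $R = O$ gives $\mathbf{z} = (0,0,0,1)$, reducing the left side to $(\eta(P)\cdot \xi(Q))^2$ and the right side to $\eta_4(P+Q)\,\eta_4(P-Q)$; both are sections of the same bundle on $\Pic^1_C \times J$ with common zero divisor $\{P \pm Q \in \iota(C)\}$, so they differ by a constant, and the normalizations $B_{44}(0,0,0,1;0,0,0,1) = 1$ and $A_{11}(1,0,0,0;0,0,0,1) = 1$ imposed in the statement are precisely what force this constant to equal~$1$. The main obstacle is the line bundle bookkeeping in the middle step: one must verify that the four pull-back components occur with matching multiplicities and that no extra component hides in the base locus of~$\mathcal{L}$, so that the ``equal divisors implies proportional sections'' principle really applies. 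Once that is in place, the specialization at $R = O$ finishes the proof.
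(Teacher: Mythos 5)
Your argument follows the same backbone as the paper's proof (identify the common zero locus of both sides, conclude proportionality, pin down the constant by normalization), but you make the key step more transparent. The paper simply asserts that ``it can be checked'' that each side vanishes iff $P \in C \pm Q \pm R$ for some choice of signs; your contraction of the matrices with $x_ix_j$ (resp.\ $z_iz_j$) exhibits each side explicitly as a product of two duality pairings, $\bigl(\eta(P)\cdot\xi(Q+R)\bigr)\bigl(\eta(P)\cdot\xi(Q-R)\bigr)$ and $\bigl(\xi(R)\cdot\eta(P+Q)\bigr)\bigl(\xi(R)\cdot\eta(P-Q)\bigr)$, which makes the vanishing claim and the matching of the four components immediate. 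The paper phrases the proportionality step as a statement about triquadratic forms on $\CK^*\times\CK\times\CK$ and invokes the fact that no quadrics vanish on either surface; your line-bundle formulation on $\Pic^1_C\times J\times J$ is equivalent in content, and you rightly flag that the multiplicities along the four components must match---a point the paper's ``vanishes if and only if'' glosses over. Your final step is more roundabout than necessary: once proportionality is established, a single numerical evaluation suffices (the paper takes $\mathbf{x}=(1,0,0,0)$, $\mathbf{y}=\mathbf{z}=(0,0,0,1)$), whereas you set only $\mathbf{z}=(0,0,0,1)$ and then argue a \emph{second} proportionality on $\Pic^1_C\times J$, which is redundant given what you already know. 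Specializing further, say to $\mathbf{y}=(0,0,0,1)$, closes the gap cleanly via the stated normalizations of $A$ and~$B$.
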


\begin{proof}
  Both sides are triquadratic forms in $\mathbf{x}, \mathbf{y}, \mathbf{z}$.
  Using the duality property mentioned above, it can be checked that
  each side vanishes if and only if 
  \[ P \in C \pm Q \pm R \text{\quad for some choice of signs.} \] 
  This implies that both sides are proportional, and
  since they take the same value~$1$ at $\mathbf{x} = (1,0,0,0)$,
  $\mathbf{y} = \mathbf{z} = (0,0,0,1)$, they must be equal (since there
  are no quadrics vanishing on either of the two surfaces).
\end{proof}

So in order to find~$A$, we construct the polynomial on the left hand
side and interpret it as a quadratic form in~$\mathbf{z}$.

On the Kummer surface, we can use~$B$ to find the image of $P+Q$ if
the images of $P$, $Q$ and~$P-Q$ are known. This is known as `pseudo-addition'
(see~\cite{FlynnSmart}) and can be extended to the 
computation of images of linear combinations $a_1 P_1 + \dots + a_m P_m$ if
the images of the $2^m$ points $e_1 P_1 + \dots + e_m P_m$ are known, where 
$e_j \in \{0, 1\}$. It should be noted that the complexity of this
procedure in terms of pseudo-additions is~$2^m$ times the bit-length
of the coefficients, so we should not use it to compute linear combinations
of many points. One important feature of this is that it works with
projective coordinates and is therefore well-suited for $p$-adic arithmetic
with fixed precision.

In a similar way, we can compute the image of 
$P + a_1 P_1 + \dots + a_m P_m$ on the dual Kummer surface, if $P \in \Pic^1_C$
and $P_1, \dots, P_m \in J$. We need to know the images of
$P + e_1 P_1 + \dots + e_m P_m$ (with $e_j \in \{0,1\}$) in addition
to $e_1 P_1 + \dots + e_m P_m$, and in the
pseudo-addition step, $B$ is replaced with~$A$. The remark on complexity
applies here as well. Below, we will take generators of the successive
quotients $K_{l-1}/K_l$ as the~$P_j$; in most cases, this quotient is
isomorphic to a subgroup of $(\Z/p\Z)^2$, so that $m \le 2$.

The following lemma tells us how to find the subset of $J(\Q_p)/J^n(\Q_p)$
of elements such that the corresponding cosets of~$J^n(\Q_p)$ meet the
image of the curve.

\begin{Lemma}
  Let $P_0 \in C(\Q_p)$, and let $Q \in J^n(\Q_p)$. If we normalize the coordinates
  $(\eta_1 : \eta_2 : \eta_3 : \eta_4)$ of the image of $P_0+Q$ on~$\CK^*$
  so that the minimal $p$-adic valuation is zero, then
  \[ v_p(\eta_1 \eta_3 - \eta_2^2) \ge n \quad\text{and}\quad
     v_p(\eta_4) \ge 2n \,.
  \]
\end{Lemma}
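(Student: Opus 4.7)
The plan is to combine the duality pairing between $\CK$ and $\CK^*$ with the defining equation $\psi$ of $\CK^*$. Setting $P := P_0 + Q \in \Pic^1_C$, the identity $P - Q = P_0 \in C$ shows that $P \in C + Q \subseteq C \pm Q$, and the pairing property recalled just before the lemma then gives the linear relation
\[
 \xi_1\eta_1 + \xi_2\eta_2 + \xi_3\eta_3 + \xi_4\eta_4 \;=\; 0,
\]
where $(\xi_1:\xi_2:\xi_3:\xi_4) = \xi(Q) \in \CK$ and $(\eta_1:\eta_2:\eta_3:\eta_4) = \eta(P_0+Q) \in \CK^*$.

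To exploit that $Q \in J^n(\Q_p)$, I would use that $O \in J$ maps to the node $(0:0:0:1)$ of $\CK$, so the rational functions $\xi_i/\xi_4$ for $i = 1,2,3$ vanish to order two at $O$ in the formal-group parameters. The expansion of $p^{n-1}\cdot P$ on $\CK$ recorded earlier makes this explicit: if $\xi(Q)$ is scaled so that $\xi_4 = 1$, then $\xi_1,\xi_2,\xi_3 \in p^{2n}\Z_p$. Substituting into the pairing, and using the assumed normalization $\min_i v_p(\eta_i) = 0$ (so that each $\eta_i$ is $p$-adically integral), one obtains
\[
 v_p(\eta_4) \;=\; v_p(\xi_1\eta_1 + \xi_2\eta_2 + \xi_3\eta_3) \;\geq\; 2n.
\]

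For the second inequality I would invoke the identity
\[
 \psi(\eta_1,\eta_2,\eta_3,0) \;=\; (\eta_1\eta_3 - \eta_2^2)^2,
\]
obtained by a short direct expansion of the $4\times 4$ determinant defining $\psi$ (this reflects the fact that the image of $C \hookrightarrow \Pic^1_C$ on $\CK^*$ is the rational curve $\{(1:x:x^2:0)\}$, appearing with multiplicity two in $\CK^* \cap \{\eta_4 = 0\}$). Hence we may write $\psi(\eta) = (\eta_1\eta_3 - \eta_2^2)^2 + \eta_4\,q(\eta)$ with $q \in \Z_p[\eta_1,\ldots,\eta_4]$; since $\psi(\eta) = 0$ and the $\eta_i$ are integral, $v_p(q(\eta)) \geq 0$, and
\[
 2\,v_p(\eta_1\eta_3 - \eta_2^2) \;\geq\; v_p(\eta_4) \;\geq\; 2n,
\]
which yields $v_p(\eta_1\eta_3 - \eta_2^2) \geq n$.

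The only point requiring care is the \emph{double} (rather than simple) vanishing of $\xi_i/\xi_4$ at the origin of $\CK$: it is this that produces the exponent $2n$ in the bound on $v_p(\eta_4)$ rather than just $n$. Without this, the pairing would only deliver $v_p(\eta_4) \geq n$, and in turn $v_p(\eta_1\eta_3 - \eta_2^2) \geq n/2$. Fortunately the required expansion is already built into the formula for $p^{n-1}\cdot P$ recorded just above, so no extra work is needed beyond invoking it together with the determinant computation for $\psi|_{\eta_4 = 0}$.
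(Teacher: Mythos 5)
Your argument is correct and takes a genuinely different route from the paper. The paper's proof begins with a $\Z_p$-coordinate change placing the rational point $P_0$ at infinity, so that $P_0$ maps to $(1:0:0:0)$ on $\CK^*$, and then reads off the congruences for $\eta_1,\dots,\eta_4$ from the entries of the matrix $A(P_0,Q)$, using both images $\eta(P_0+Q)$ and $\eta(P_0-Q)$ together with the relation $\eta_4\eta_4' = A_{44}$, $\eta_4+\eta_4' \equiv 2A_{14}$, etc. Your approach avoids that coordinate normalization and the detour through $A$ entirely: the bound $v_p(\eta_4) \ge 2n$ falls straight out of the $\CK$--$\CK^*$ incidence pairing $\sum_i \xi_i\eta_i = 0$ (which applies since $P_0 + Q \in C + Q$), combined with the fact that the image of $J^n(\Q_p)$ on $\CK$ is $(O(p^{2n}):O(p^{2n}):O(p^{2n}):1)$; and the bound $v_p(\eta_1\eta_3 - \eta_2^2) \ge n$ follows from the vanishing of the defining quartic $\psi$ together with the identity $\psi(\eta_1,\eta_2,\eta_3,0) = (\eta_1\eta_3 - \eta_2^2)^2$, which I checked by expanding the determinant with $\eta_4=0$. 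Both facts you rely on hold with $\Z_p$-coefficients once the curve has a $\Z_p$-model, which is the standing assumption. What the paper's route buys is consistency with the machinery it is about to deploy (pseudo-addition via $A$); what yours buys is brevity and a cleaner separation of the two inequalities: the first is a projective-duality statement, the second is a statement about the shape of $\CK^*$ along the plane $\eta_4=0$, and neither needs the symmetric pair $\eta(P_0 \pm Q)$.
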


\begin{proof}
  Let $P$ be the image on~$\CK^*$ of $P_0 \in C(\Q_p)$.
  If we make an invertible coordinate change over~$\Z_p$ on the $\BP^1$
  that $C$ maps to, then this induces an invertible coordinate change over~$\Z_p$
  on the ambient projective spaces of~$\CK$ and of~$\CK^*$,
  which leaves
  the valuations of $\eta_1 \eta_3 - \eta_2^2$ and of~$\eta_4$ invariant. We
  can therefore assume without loss of generality that the point on the curve
  is at infinity. Then $P = (1 : 0 : 0 : 0)$.
  
  Since $Q \in J^n(\Q_p)$, its image on~$\CK$ has coordinates
  of the form
  \[ (\alpha p^{2n} : \beta p^{2n} : \gamma p^{2n} : 1)
       \text{\quad with $\alpha, \beta, \gamma \in \Z_p$.}
  \]
  Denote the coordinates of the images of $P_0 \pm Q$ on~$\CK^*$
  by $(\eta_1 : \eta_2 : \eta_3 : \eta_4)$
  and~$(\eta'_1 : \eta'_2 : \eta'_3 : \eta'_4)$.
  If we evaluate the entries of the matrix~$A$ at the coordinates of $P$ and~$Q$,
  then by the definition of~$A$ we have (with suitable scaling)
  \[ 2 A(P,Q) = (\eta_1, \eta_2, \eta_3, \eta_4)^\top
                   \,(\eta'_1, \eta'_2, \eta'_3, \eta'_4)
                + (\eta'_1, \eta'_2, \eta'_3, \eta'_4)^\top
                    \,(\eta_1, \eta_2, \eta_3, \eta_4) \,.
  \]
  We obtain 
  \[ \eta_1 \eta'_1 = A(P,Q)_{11} \equiv 1 \bmod p^{2n} \,, \] 
  so that we can scale the coordinates to have 
  \[ \eta_1 \equiv \eta'_1 \equiv 1 \bmod p^{2n} \,. \]
  We then find that
  \[ \eta_4 + \eta'_4 \equiv 2A(P,Q)_{14} \equiv 0 \bmod p^{2n} \quad\text{and}\quad
     \eta_4 \eta'_4 = A(P,Q)_{44} \equiv 0 \bmod p^{4n}\,;
  \]
  this implies that
  \[ \eta_4 \equiv \eta'_4 \equiv 0 \bmod p^{2n} \,. \]
  
  All entries in $A(P,Q)$, except~$A_{11}$, have valuation at least~$2n$.
  It follows in a similar way as above that 
  \[ \eta_2, \eta'_2, \eta_3, \eta'_3 \equiv 0 \bmod p^n \]
  and therefore that
  \[ \eta_1 \eta_3 - \eta_2^2 \equiv \eta'_1 \eta'_3 - {\eta'_2}^2
     \equiv 0 \bmod p^n
  \]
  as claimed.
\end{proof}

Recall that we have fixed an embedding $\iota : C \to J$, given by some
rational divisor (class) of degree~1 on~$C$. This induces an isomorphism
$\iota : \Pic^1_C \stackrel{\simeq}{\to} J$.
So in order to test whether an element of~$J(\Q)/K_n$ is in the image
of~$C(\Q_p)$, we map a representative in~$J(\Q)$ to~$\Pic^1_C$ via~$\iota^{-1}$
and then to the dual Kummer surface, and check whether the normalized coordinates 
of the image satisfy 
\[ v_p(\eta_4) \ge 2n \quad\text{and}\quad v_p(\eta_1 \eta_3 - \eta_2^2) \ge n \,. \]
Note that we can compute the image on the dual Kummer surface if we
know the images of $e_1 P_1 + \dots + e_m P_m$ on~$\CK$
and on~$\CK^*$, where the~$P_j$ are
representatives of generators of~$J(\Q)/K_n$ (with $e_j \in \{0, 1\}$).

If we proceed as just described, then we need to enumerate $J(\Q)/K_n$
(of size approximately~$p^{2n}$) in order to find the image of~$C$, which
is of size approximately~$p^n$. We can make several improvements in order
to reduce the complexity to something closer to the lower bound of~$O(p^n)$.
One improvement is to
compute the images successively for $n = 2, 3, \dots$. When we go from
$n = m$ to $n = m+1$, we only have to consider group elements that map
into the image of the curve on the previous level; there will usually be
$p^2$ of these for each of the roughly $p^m$ elements in the previous image.
This gives a complexity of $p^{m+2}$ for this step, and a total complexity
of $\frac{p^2}{p-1}\,p^n$. This is still worse by a factor of $p^2/(p-1)$
than what we would get if we could compute the images of points in~$C(\Q_p)$
in $J(\Q_p)/J^n(\Q_p)$ directly, but it is reasonably good for applications.

We can further improve on this in many cases. Let $P \in J(\Q)$ such that its
image on~$\CK^*$ satisfies $v_p(\eta_1 \eta_3 - \eta_2^2) \ge m$ as above.
We work in an affine patch of~$\CK^*$ such that the image of~$P$ has
$p$-adically integral coordinates and write $h(P)$ for the function
$\eta_1 \eta_3 - \eta_2^2$, evaluated at~$P$ in terms of these affine
coordinates. The theory of formal groups implies that the map
\[ J^m(\Q_p) \To \F_p \,, \quad
   Q \longmapsto p^{-m} \bigl(h(P+Q) - h(P)\bigr) \bmod p
\]
is linear, with kernel containing $J^{m+1}(\Q_p)$. This gives us a linear
form $\ell_m : K_m/K_{m+1} \to \F_p$. If $\ell_m$ is nonzero, then we only
need to evaluate it on a generating set of~$K_m/K_{m+1}$ in order to
find the points $Q \in K_m$ such that $v_p(h(P+Q)) \ge m+1$. Since
$K_m/K_{m+1}$ usually has two generators, this gives a complexity of
order $(2+p) p^m$: for each of the roughly $p^m$ points~$P$, we have
to evaluate $\ell_m$ on the two generators and then compute the (usually)
$p$ lifts to the next level. Note that the linear form is nonzero
on $J^m(\Q_p)/J^{m+1}(\Q_p)$ if and only if the reduction mod~$p$ of the
image of~$P$ on~$\CK^*$ is nonsingular. This is the case unless
$p = 2$ or the corresponding point in~$C(\F_p)$ has vanishing $y$-coordinate.
So if $p$ is an odd prime such that the polynomial defining~$C$ is not
divisible by~$p$, there will be at most six `problematic' classes mod~$p$,
contributing at most $6p^2$ to the complexity at each step. The overall
complexity is therefore~$O(p^n)$ for such primes, which is of the order
of the obvious lower bound.


\section{Implementation} \label{S:Imp}

In this section, we describe a concrete implementation of the Mordell-Weil
sieve on genus~2 curves that can be used to prove that a given curve does
not have a rational point. For this implementation, the {\sf MAGMA}
computer algebra system~\cite{Magma} was used.
Our implementation is available at~\cite{Software}.

We assume that we are given as input
\begin{enumerate}
  \item the polynomial $f(x)$ on the right hand side of the equation
        $y^2 = f(x)$ of the curve~$C$,
  \item generators of the Mordell-Weil group $J(\Q)$, where $J$ is the
        Jacobian variety of the curve, and
  \item a rational divisor~$D$ of degree~$3$ on the curve.
\end{enumerate}
The latter is used to provide the embedding $\iota : C \to J$, which
is given by sending a point~$P \in C$ to the class of $P + W - D$,
where $W$ is a canonical divisor.

Elements of~$J(\Q)$ can be represented by divisors of degree~$2$, and
divisors can be represented by pairs $(a, b)$ of polynomials as
in Section~\ref{S:bad} above. We let $r$ denote the rank of~$J(\Q)$.

In the first step, we have to provide the necessary input for the actual
sieving procedure. This means that we have to determine the group structure
of~$J(\F_p)$, the reduction homomorphism $\phi_p : J(\Q) \to J(\F_p)$, and the
image of $C(\F_p)$ in~$J(\F_p)$ in terms of this group structure. This
involves the computation of $r + t + \#C(\F_p)$ discrete logarithms in
the group~$J(\F_p)$, where $r$ is the Mordell-Weil rank and $t$ is the number
of generators of the torsion subgroup of~$J(\Q)$. The first $r + t$ of these
are needed to find~$\phi_p$, and the others are needed to find the image
of~$C(\F_p)$ in~$J(\F_p)$, represented by the abstract group~$G_p$.
If we restrict to primes~$p$ such that $\#J(\F_p)$ is $B$-smooth, then
we can use Pohlig-Hellman reduction~\cite{PohligHellman}
for the computation of the discrete
logarithms, so that the complexity of this step is about $r + t + \#C(\F_p)$
(assuming $B$ is fixed). The total effort required for the computation
in the first step is therefore
\[ {} \approx (r + t) \#S + \sum_{p \in S} \#C(\F_p)
      \approx (r + t) \#S + \sum_{p \in S} p
      \approx \bigl(r + t + \tfrac{1}{2} \max S\bigr) \#S \,.
\]
In the last estimate, we have made the simplifying assumption that the primes
in~$S$ are distributed fairly regularly, so the factor~$\frac{1}{2}$ will
not be completely accurate. The point is that this is essentially quadratic
in $\#S$ or $\max S$. So the relevant question is how far we have to go
with $\max S$ in order to collect enough information to make success likely.

A reliable theoretical analysis of this question appears to be rather
difficult, although one could try to get some information out of an
approach along the lines of Poonen's heuristic~\cite{PoonenHeur}.
Therefore we use the following approach. We compute the relevant information
for each prime~$p$ (such that $\#J(\F_p)$ is $B$-smooth) in turn. Then we
compute the numbers $n(S, N_{l-1-r-j})$ for $j = 0,1,2,3$ in the notation
of Section~\ref{SubS:S}, where $S$ is the set of primes used so far.
This can be done incrementally, caching the values of
$\#C_{N,p}/\#(G_p/NG_p)$ for later use (they only depend on the gcd of~$N$
and the exponent of~$G_p$), and does not cost much time.
We stop this part of the computation when
\[ \min_j n(S, N_{l-1-r-j}) < \eps \]
for a given parameter $\eps \ll 1$. Tests performed with the `small curves'
from~\cite{BruinStollExp} indicate that $\eps = 0.01$ is a reasonable
choice and that $B = 200$ leads to good results.
Figure~\ref{fig:nS} shows the dependence of $n(S, N_{l-1-r-j})$
from~$\max S$ in a fairly typical example (of rank~3).

\begin{figure}[htb]
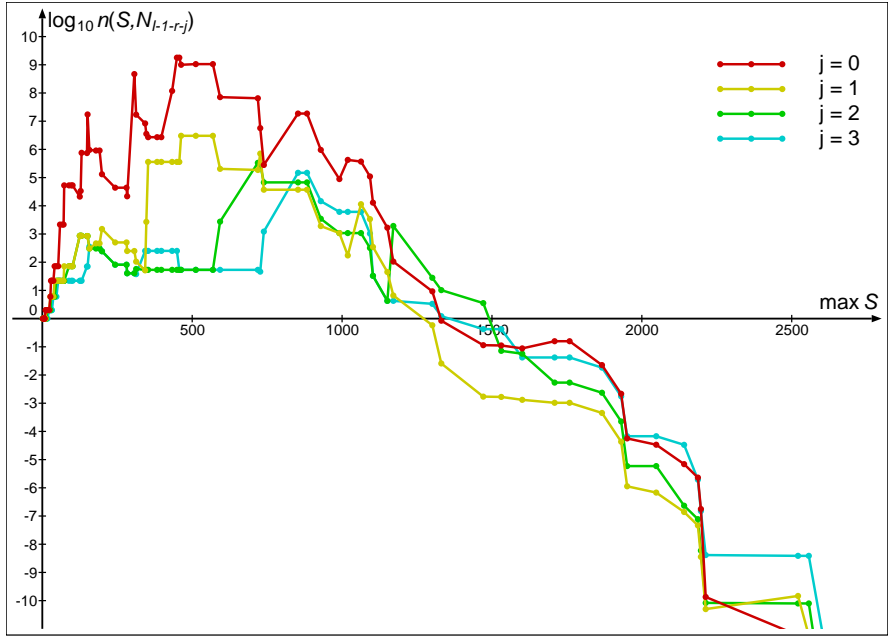

  \begin{center}
    \Gr{fig-GIsizes}{0.8\textwidth}
    \caption{Expected sizes $n(S,N_{l-1-r-j})$ versus $\max S$} \label{fig:nS}
  \end{center}
\end{figure}

We include the computation of `bad' and `deep' information 
(as described in Sections \ref{S:bad} and~\ref{S:deep} above) as we go along.
We let $n = 2, 3, 4, \dots$, and when $n = p$ is a prime,
we compute information mod $p$ if $p < 10$, or $p \le B$ and the
given model of~$C$ is regular at~$p$ such that $C/\F_p$ has only one
component, or $p$ is a prime of good reduction and $\#J(\F_p)$ is $B$-smooth.
If $n = p^v$ is a prime power $p^v$,
then we compute information mod~$p^m$ with $m = (v+1)/2$ if $v$ is odd.
This scheme proved to give the best performance with our implementation.
It hits a good balance
between the effort required to compute the information (which is much
greater than for `flat' and `good' information at primes $q \approx p^m$)
and the gain in speed resulting from the additional information. The
information mod~$p^m$ is therefore computed in the following order.
\[ p^m = 2, 3, 5, 7, 2^2, 11, 13, 17, 19, 23, 3^2, 29, 31, 2^3, 37, \dots \]

After the information has been collected, we
compute a `$q$ sequence' as described in Section~\ref{SubS:N},
using a target value of~$\eps_1$ with
$\eps < \eps_1 < 1$. We take $\eps_1 = 0.1$ as the standard value of this
parameter. Since $\eps_1 > \eps$, we know from the first part of the
computation that a suitable sequence exists. If we take $\eps_1$ not too
close to~$\eps$, this second part of the computation is usually rather fast.

Finally, we use the collection $\{(G_p, \phi_p, C_p) : p \in S\}$ and
the $q$ sequence as input for the actual sieve computation.
This computation is done as described in Section~\ref{SubS:A}.
If it does not result in the desired
contradiction, we divide the $\eps$ and $\eps_1$ parameters by~$10$ and start over
(keeping the local information we have already computed).


\section{Efficiency} \label{S:Eff}

How long do our computations take? Let us look at the various
steps that have to be performed, in the context of the first application
discussed in Section~\ref{S:Appl}
above: verifying that a given curve~$C$ of genus~2 over~$\Q$ does
not have rational points. We assume that a Mordell-Weil basis is known.
Note that in practice, the part of the computation that determines
this Mordell-Weil basis can be rather
time-consuming, but this is a different problem, which we will not
consider here. See~\cite{Stoll2DH,StollH1,StollH2} for the relevant
algorithms. We also assume that
we know a rational divisor of degree~3 on~$C$. Again, it might be not
so easy to find such a divisor in practice.

We consider the 1447 curves for which we had to perform a Mordell-Weil
sieve computation in~\cite{BruinStollExp} in order to rule out the
existence of rational points. The difference to the 1492~curves mentioned
earlier comes from the fact that some curves had rank zero, and some
others could be ruled out immediately by the information coming from
the Birch and Swinnerton-Dyer conjecture.
The timings mentioned below were obtained on
a machine with 4~GB of RAM and a 2.0~GHz dual core processor.
As before, $r$ denotes the Mordell-Weil rank.

Among the 521 curves with $r = 1$, there are 514 such that we already
obtain a contradiction while collecting the information. This occurs
when we find a prime~$p$ or prime power $p^n$ such that the images of~$J(\Q)$ 
and of~$C(\Z/p^n\Z)$ in~$J(\Z/p^n\Z)$ are disjoint. 
It is perhaps worth noting that without looking at `bad' and `deep'
information, we obtain this kind of immediate contradiction only for 406~curves.
The average computing time for a single
curve was about 0.1~seconds, and the longest time was about 6.3~seconds.
The distribution of running times is shown in Figure~\ref{fig:rk1}
(on a logarithmic scale).

\begin{figure}[htb]
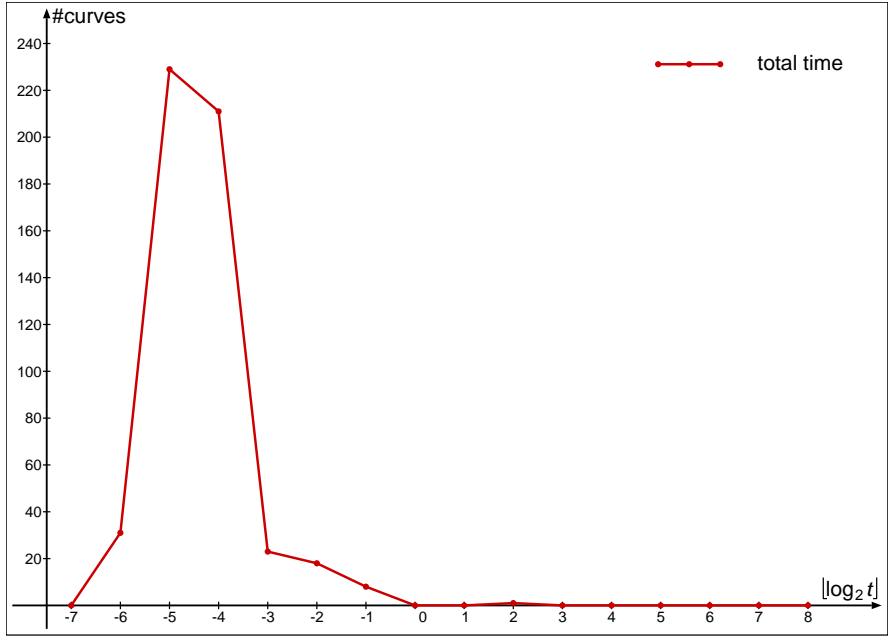

  \begin{center}
    \Gr{fig-rk1}{0.8\textwidth}
    \caption{Running times for $r = 1$} \label{fig:rk1}
  \end{center}
\end{figure}

The anonymous referee asked whether there is a heuristic explanation
for the observation that information at one prime is almost always
enough to rule out rational points. Here is an attempt at such an
explanation. We use the following probabilistic model. We assume
that $J(\F_p)$ is cyclic of order uniformly distributed in an interval
around~$p^2$ of length $\asymp p^{3/2}$, that the generator~$P_0$ of~$J(\Q)$
(which we assume to be torsion-free of rank one) is mapped to a random
element of~$J(\F_p)$ and that the points in~$C(\F_p)$ form a random
subset of~$J(\F_p)$. We are interested in the probability that $C(\F_p)$
and the image of~$J(\Q)$ in~$J(\F_p)$ are disjoint. Note that the case
when $J(\F_p)$ is cyclic is the worst case; if $J(\F_p)$ is not cyclic,
then the cyclic image of~$J(\Q)$ will be more likely to be small.

\begin{Lemma}
  In the model described above, the probability that $C(\F_p)$ does not
  meet the image of~$J(\Q)$ is $\gg 1/p$.
\end{Lemma}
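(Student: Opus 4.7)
The plan is to compute the disjointness probability conditionally on $N := \#J(\F_p)$ by decomposing according to the order of the cyclic image $\langle \phi_p(P_0) \rangle \subset J(\F_p)$, and then to average over~$N$.

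First, I would fix $N$ and choose a generator $G$ of $J(\F_p) \cong \Z/N\Z$, writing $\phi_p(P_0) = kG$ with $k$ uniform on~$\Z/N\Z$. The image subgroup $\langle kG \rangle$ has order $e = N/\gcd(k,N)$, and the number of $k$ realizing a prescribed divisor $e \mid N$ is exactly $\phi(e)$. Since $C(\F_p)$ is a uniform random $m$-subset of $J(\F_p)$ with $m \asymp p$, the probability that it misses the unique subgroup of order~$e$ equals $\binom{N-e}{m}/\binom{N}{m}$, so
\[
  P(N) := \Prob\bigl[\langle \phi_p(P_0) \rangle \cap C(\F_p) = \emptyset \mid N\bigr]
        = \sum_{e \mid N} \frac{\phi(e)}{N} \cdot \frac{\binom{N-e}{m}}{\binom{N}{m}}\,.
\]

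Next, I would bound the combinatorial factor from below by
\[
  \frac{\binom{N-e}{m}}{\binom{N}{m}} = \prod_{i=0}^{e-1}\Bigl(1-\tfrac{m}{N-i}\Bigr)
     \gg \exp(-Cem/N) \gg \exp(-Ce/p)
\]
for some absolute constant~$C$, valid in the range $e \leq N/2$. Averaging $P(N)$ over $N$ uniform on the interval $I$ of length $L \asymp p^{3/2}$ around~$p^2$, swapping the order of summation, and using that the number of multiples of a given $e \leq L/2$ in~$I$ equals $L/e \cdot (1+o(1))$, one obtains
\[
  \frac{1}{|I|}\sum_{N \in I} P(N) \gg \frac{1}{p^2}\sum_{e=1}^{L/2}\frac{\phi(e)}{e}\exp(-Ce/p)\,.
\]

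The final step is an Abel summation argument against the classical asymptotic $\sum_{e \leq X}\phi(e)/e \sim (6/\pi^2)\,X$; an integration by parts yields
\[
  \sum_{e=1}^{\infty}\frac{\phi(e)}{e}\exp(-Ce/p) \sim \frac{6\,p}{\pi^2 C}\,,
\]
and hence the averaged disjointness probability is $\gg 1/p$, as claimed. The bulk of the contribution comes from divisors $e \asymp p$ of~$N$, where the combinatorial factor is bounded below by a positive constant; intuitively, a cyclic rank-one image of order $\asymp p$ is of comparable size to $C(\F_p)$ and still has a decent chance of missing it.

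The main obstacle in turning this into a full proof is keeping the various error terms uniform enough (in the combinatorial estimate, in the divisor count over the interval~$I$, and in the Abel summation) that the exponential weighting really does isolate the correct range $e \asymp p$; once this bookkeeping is carried out, the number-theoretic input is entirely standard.
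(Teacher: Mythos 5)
Your argument is correct and reaches the right conclusion, but organizes the computation differently from the paper. You parameterize by the \emph{order}~$e$ of the image $\langle\phi_p(P_0)\rangle$, condition on $N = \#J(\F_p)$, and use the exact count that $\phi(e)$ values of~$k$ give an image of order~$e \mid N$; after averaging over~$N$ and swapping sums, you are led to the Euler-$\phi$ mean $\sum_{e\le X}\phi(e)/e \sim (6/\pi^2)X$ and an Abel-summation argument against the exponential weight $e^{-Ce/p}$. The paper instead parameterizes by the \emph{index} $d = N/e$, computes the probability $\Prob(d=d_0) \approx 6/(\pi^2 d_0^2)$ by counting pairs $(n,k)$ with $\gcd(n,k)=d_0$ in a box, and sums $\Prob(d=d_0)\,q_{d_0}$ over a window $\alpha p \le d_0 \le \beta p$, approximating by $\frac{1}{p}\int_{1/\beta}^{1/\alpha}e^{-u}\,du$ and letting $\alpha\to 0$, $\beta\to\infty$ at the end. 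Arithmetically these are the same computation ($\Prob(d=d_0)$ is precisely the average over~$N$ of $\phi(N/d_0)/N$ over multiples of~$d_0$), but your route avoids the cutoff parameters and the $\liminf$ at the cost of only delivering the lower bound $\gg 1/p$ rather than the explicit constant $6/\pi^2$ that the paper extracts. (One can recover the constant from your approach by replacing the one-sided bound $\binom{N-e}{m}/\binom{N}{m} \gg \exp(-Ce/p)$ with a two-sided asymptotic $\exp(-em/N)(1+O(p^{-1/2}))$ in the range $e \asymp p$, but as stated this is a lower-bound-only argument.) Two minor bookkeeping points you flag but should verify explicitly: the tail $e > L/2$ is negligible because $e^{-Ce/p}$ decays super-polynomially once $e \gg p$, and the divisor count $\#\{N\in I : e \mid N\} = L/e\,(1+o(1))$ holds uniformly for $e = o(L)$, which covers the relevant range $e \asymp p$ since $L \asymp p^{3/2}$.
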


\begin{proof}
  Let $n = p^2 + O(p^{3/2})$ be the order of~$J(\F_p)$, denote the
  index of the image of~$J(\Q)$ in~$J(\F_p)$ by~$d$, and let
  $m = p + O(p^{1/2})$ denote~$\#C(\F_p)$. Then the conditional probability,
  given that the index is~$d \ge 2$, is
  \begin{align*}
    q_d &= \frac{\binom{n - n/d}{m}}{\binom{n}{m}}
         = \prod_{k=0}^{m-1} \Bigl(1-\frac{1}{d(1-k/n)}\Bigr) \\
        &= \exp\Bigl(\sum_{k=0}^{m-1} \log\Bigl(1-\frac{1}{d(1-k/n)}\Bigr)\Bigr)
         = \exp\Bigl(-\sum_{k=0}^{m-1} \frac{1}{d(1-k/n)} + O(p d^{-2})\Bigr) \\
        &= \exp\Bigl(-\frac{1}{d} \int_0^{m} \frac{dt}{1-t/n}
                        + O(d^{-1}) + O(p d^{-2})\Bigr) \\
        &= \exp\Bigl(\frac{n}{d}\,\log \Bigl(1 - \frac{m}{n}\Bigr)
            + O(d^{-1}) + O(p d^{-2})\Bigr) \\
        &= \exp\Bigl(-\frac{m}{d} + O(d^{-1}) + O(p d^{-2})\Bigr) \,.
  \end{align*}
  Here $O(d^{-1})$ denotes a quantity that is bounded by a constant times~$d^{-1}$,
  and $O(p d^{-2})$ denotes a quantity that is bounded by a constant
  times~$p d^{-2}$ for large~$p$.
  
  We restrict to the range $\alpha p \le d \le \beta p$ with fixed
  $0 < \alpha < \beta$. Then
  \[ q_d = e^{-m/d} \bigl(1 + O(p^{-1})\bigr)
         = e^{-p/d} \bigl(1 + O(p^{-1/2})\bigr) \,.
  \]
  We now have to estimate the probability that $d$ has a given value~$d_0$
  in the range under consideration. Fix a generator~$Q$ of~$J(\F_p)$ and write 
  $\bar{P}_0 = k \cdot Q$, where $\bar{P}_0$ is the image of~$P_0$ in~$J(\F_p)$.
  Then the probability is
  \begin{align*}
    \Prob(d = d_0) &= \frac{\#\{(n,k) : 0 \le k < n = p^2 + O(p^{3/2}),
                                        \gcd(n,k) = d_0\}}%
                           {\#\{(n,k) : 0 \le k < n = p^2 + O(p^{3/2})\}} \\
                   &= \frac{6}{\pi^2 d_0^2} \Bigl(1 + O(p^{-1/2+\varepsilon})\Bigr) \,.
  \end{align*}
  So the total probability can be bounded below by
  \begin{align*}
    \sum_{\alpha p \le d_0 \le \beta p} \Prob(d = d_0) q_{d_0}
      &= \frac{6}{\pi^2} \sum_{\alpha p \le d_0 \le \beta p}
           \frac{1}{d_0^2}\,e^{-p/d_0} \Bigl(1 + O(p^{-1/2+\varepsilon})\Bigr) \\
      &= \frac{6}{\pi^2} \Bigl(\int_{\alpha p}^{\beta p} e^{-p/t} \frac{dt}{t^2}\Bigr)
           \Bigl(1 + O(p^{-1/2+\varepsilon})\Bigr) \\
      &= \frac{6}{\pi^2 p} \Bigl(\int_{1/\beta}^{1/\alpha} e^{-u}\,du\Bigr)
           \Bigl(1 + O(p^{-1/2+\varepsilon})\Bigr) \\
      &= \frac{6}{\pi^2 p} \bigl(e^{-1/\beta} - e^{-1/\alpha}\bigr)
            + O(p^{-3/2+\varepsilon}) \,.
  \end{align*}
  Letting $\alpha \to 0$ and $\beta \to \infty$, we obtain
  \[ \liminf_{p \to \infty}
       p \cdot \Prob\bigl(C(\F_p) \cap \langle \bar{P}_0 \rangle = \emptyset\bigr)
       \ge \frac{6}{\pi^2} \,.
  \]
\end{proof}

\begin{figure}[htb]
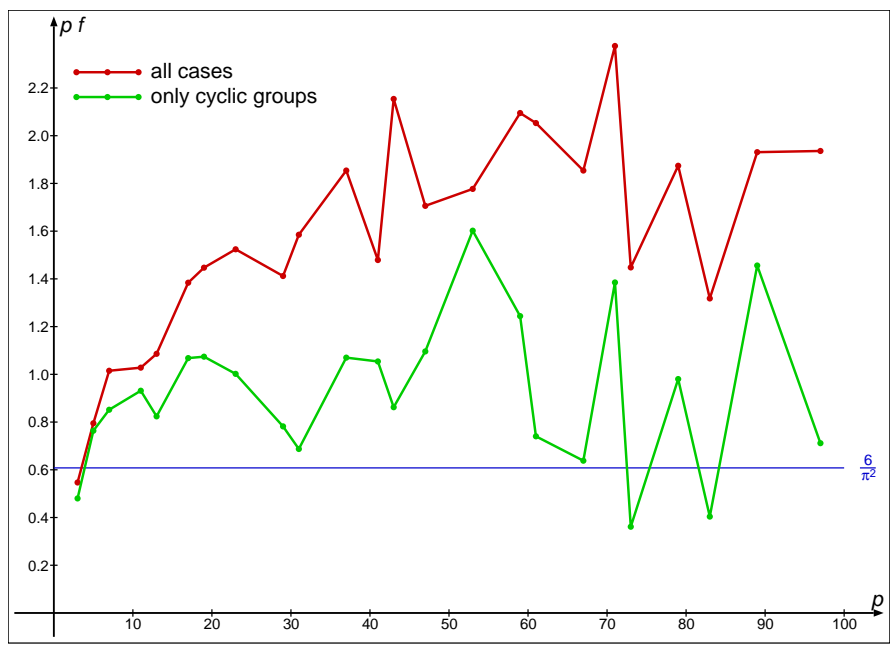

  \begin{center}
    \Gr{fig-rk1-primes}{0.8\textwidth}
    \caption{$p$ times success frequency at~$p$ versus~$p$} \label{fig:rk1-primes}
  \end{center}
\end{figure}

Since the cases with $d_0 \ll p$ and $d_0 \gg p$ are likely not to
contribute anything in the limit, we would expect that in the model considered,
we actually have
\[ \Prob\bigl(C(\F_p) \cap \langle \bar{P}_0 \rangle = \emptyset\bigr)
     \sim \frac{6}{\pi^2} \cdot \frac{1}{p} \quad \text{as $p \to \infty$.}
\]
Since $\sum_p p^{-1}$ diverges, we expect an infinite (but rather sparse) set
of primes~$p$ such that information mod~$p$ proves that there are no rational
points. This is consistent with the observations mentioned above.
Figure~\ref{fig:rk1-primes} shows $p$ times the fraction of curves in our
data set where reduction mod~$p$ proves the absence of rational points among
all curves with $r = 1$ and trivial torsion that have good reduction at~$p$,
as a function of~$2 < p < 100$. We see that (except for $p = 3$) this value
is considerably larger than $6/\pi^2$. The most likely explanation is that
this is an effect of the occurrence of non-cyclic groups among the~$J(\F_p)$.
This is confirmed by the data obtained from only looking at cases where
$J(\F_p)$ is cyclic (green in the figure).

In general, a similar heuristic approach should give a success probability
of the order of $p^{-r}$ when the rank is~$r$. This indicates that there is
a positive probability for success at some single prime, but this probability
is less than~$1$ and decreases to zero as $r$ increases. This is consistent
with the observations described below.

\medskip

\begin{figure}[htb]
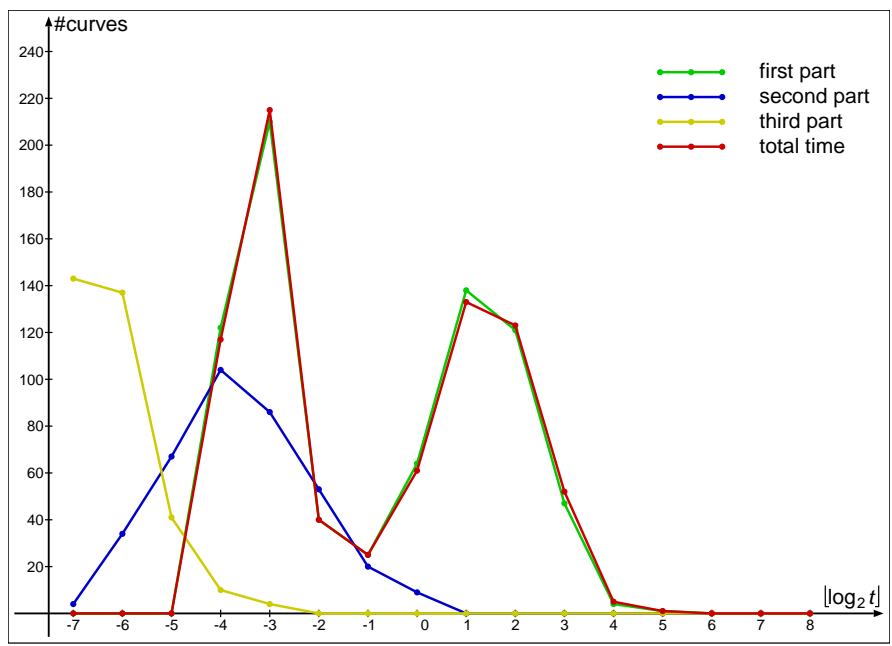

  \begin{center}
    \Gr{fig-rk2}{0.8\textwidth}
    \caption{Running times for $r = 2$} \label{fig:rk2}
  \end{center}
\end{figure}

There are 772 curves with $r = 2$. For 394 among them, we obtain
a contradiction from one prime or prime power alone. 
The average computing time for these
curves was 0.24~seconds with a maximum of 6.4~seconds. For the remaining
curves, the average total computing time was 4.9~seconds, with a maximum
of 51.8~seconds. The distribution of the running times (overall and for
the various parts of the computation) is shown in Figure~\ref{fig:rk2}.
The two peaks essentially correspond to the two groups of curves. The largest size of
a set~$A(L)$ that occurred in the computation was~$236$, the average 
of this maximum size in each computation was~$6.1$.
Note that the inclusion of `bad' and `deep' information results in
a speed-up by roughly a factor two.

There are 152 curves with $r = 3$. For 14~curves, we still find
a contradiction from the local information at one prime alone. 
The average total time was 34.3~seconds, the maximum was about 5.6~minutes.
The first step took 28.1~seconds on average. For the curves where the second
and third steps were performed, the second step took 2.3~seconds and the
third step 4.6~seconds on average. The distribution of the running times
(overall and for the various steps) is shown in Figure~\ref{fig:rk3}.
The largest size of a set~$A(L)$ was~$251\,148$ (occurring for the curve
with the largest running time), the average was~$5049$.
For these curves, the computation is infeasible without using `bad' and `deep'
information, since otherwise the sets~$A(L)$ occurring in the last part
of the computation get much too large.

\begin{figure}[htb]
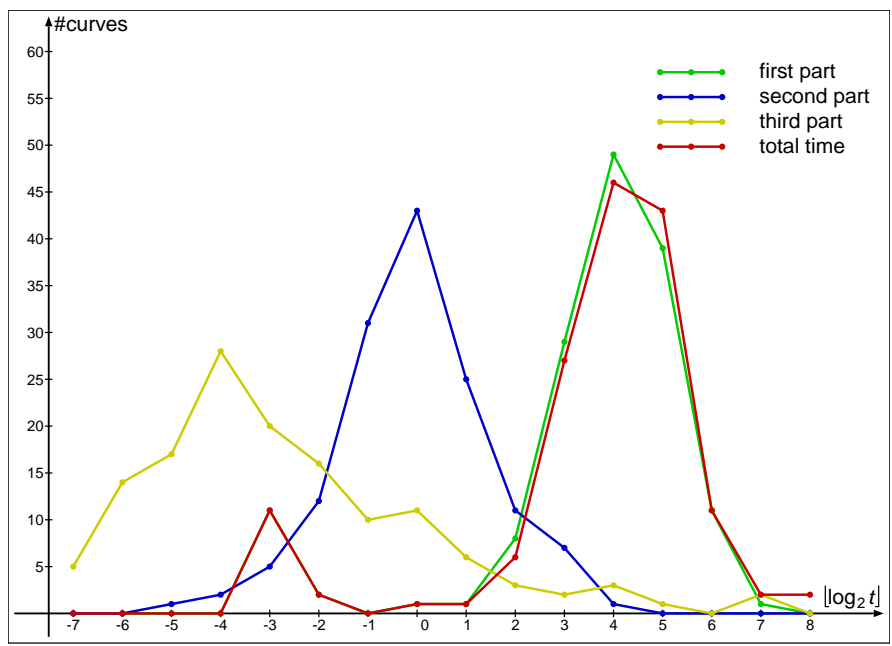

  \begin{center}
    \Gr{fig-rk3}{0.8\textwidth}
    \caption{Running times for $r = 3$} \label{fig:rk3}
  \end{center}
\end{figure}

There are only two curves with $r = 4$. One of them is `hard' and the other
one is `easy'. For the `hard' curve, the computation takes about 26~minutes
with the standard settings (ca.~2~min for the first step, 10~seconds for
the second and the remaining 24~min for the sieving step). This is mostly
due to the large size of the sets~$A(L)$ (up to more than 2~million) occurring
in this computation. If we change the parameters so that deep information
mod~$p^n$ is used for all $p^n < 520$, then the computation takes less than
12~minutes (3~min, 10~sec, 8.5~min), and the largest set~$A(L)$ has size
only about $750\,000$. The `easy' curve is dealt with in 47~seconds
(44.5~sec, 2~sec, 0.5~sec) using the standard settings.

\medskip

From these data, we conclude that our current implementation works well
for curves with Jacobians of Mordell-Weil rank $r \le 3$. For larger rank,
there is so far only sparse evidence from examples, suggesting that
individual curves with $r$ as large as~$6$ are still within the range
of feasibility.
In any case, it is clear that average running times increase quickly
with~$r$.

Our timings also show that the first part of the computation (gathering
the local information) usually takes the lion's share of the total time.
Improvements in this part (and faster discrete logarithm computations
in particular) would result in a noticeable speedup of the procedure
as a whole.


\end{document}